\newtheorem{theorem}{Theorem}[section]
\newtheorem{lemma}[theorem]{Lemma}
\newtheorem{assumption}{Assumption}[section]
\newtheorem{experiment}{Experiment}[section]
\newenvironment{remark}
  {\pushQED{\qed}\remarkex}
  {\popQED\endremarkex}
\renewenvironment{abstract}
 {\small
  \begin{center}
  \bfseries \abstractname\vspace{-0.5em}\vspace{0pt}
  \end{center}
  \list{}{%
    \setlength{\leftmargin}{7mm}
    \setlength{\rightmargin}{\leftmargin}%
  }%
  \item\relax}
 {\endlist}
\numberwithin{equation}{section}
\DeclareMathOperator{\diag}{Diag}
\DeclareMathOperator{\var}{Var}
\DeclareMathOperator{\M}{M}
\newcommand{\Zstroke}{%
  \text{\ooalign{\hidewidth\raisebox{0.2ex}{--}\hidewidth\cr$Z$\cr}}%
}
\newcommand{\zstroke}{%
  \text{\ooalign{\hidewidth -\kern-.3em-\hidewidth\cr$z$\cr}}%
}
\begin{document}

\title{Consensus based optimization via jump-diffusion stochastic differential equations}

\author{D. Kalise\thanks{%
Department of Mathematics, Imperial College London, South Kensington Campus, SW7 2AZ London, UK; dkaliseb@imperial.ac.uk} \and A. Sharma\thanks{School of Mathematical Sciences, University of Nottingham,  UK; Akash.Sharma1@nottingham.ac.uk} \and M.V. Tretyakov\thanks{%
School of Mathematical Sciences, University of Nottingham,
 UK; Michael.Tretyakov@nottingham.ac.uk}}

\date{}
\maketitle
\begin{abstract}
We introduce a new consensus based optimization (CBO) method where interacting particle system  is driven by  jump-diffusion stochastic differential equations.  We study well-posedness of the particle system as well as of its mean-field limit.  The major contributions of this paper are proofs of convergence of the interacting particle system towards  the mean-field limit and convergence of a discretized particle system towards the continuous-time dynamics in the mean-square sense. We also prove convergence of the mean-field jump-diffusion SDEs towards global minimizer for a large class of objective functions. We demonstrate improved performance of the proposed CBO method over earlier CBO methods in numerical simulations on benchmark objective functions.   
\end{abstract}

\section{Introduction}
Large-scale individual-based models have become a well-established modelling tool in modern science and engineering, with applications including pedestrian motion, collective animal behaviour, swarm robotics and molecular dynamics, among many others. Through the iteration of basic interactions forces such as attraction, repulsion, and alignment,  these complex systems of  exhibit a rich self-organization behaviour (see e.g. \cite{cbo23,cbo20,cbo21,cbos19,cbo22,cbo39}).

Over the last decades, individual-based models have also entered the field of global optimization and its many applications in operations research, control, engineering, economics, finance, and machine learning. In many  applied problems arising in the aforementioned fields, the objective function to be optimized can be non-convex and/or non-smooth, disabling the use of traditional continuous/convex optimization technique. In such scenarios, individual-based metaheuristic models have been proven surprisingly effective. Examples include genetic algorithms, ant colony optimization, particle swarm optimization, simulated annealing, etc. (see \cite{cbo26,cbo24,cbo25} and references therein). These methods are probabilistic in nature which set them apart from other derivative-free algorithms \cite{cbo30}. Unlike many convex optimization methods, metaheuristic algorithms,  are relatively simple to implement and easily parallelizable. This combination of simplicity and effectiveness has fuelled the application of metaheuristic in complex engineering problems such as shape optimization, scheduling problems, and hyper-parameter tuning in machine learning models. However, it is often the case that metaheuristics lack rigorous convergence results, a question which has become an active area of research \cite{cbo50,cbo41}.

In \cite{cbo1}, the authors introduced a optimization algorithm which employs an individual-based model to frame a global minimization
\begin{equation*}
    \min\limits_{x \in \mathbb{R}^{d}} f(x),
\end{equation*}
where $f(x)$ is a positive function from $\mathbb{R}^{d}$ to $\mathbb{R}$, as a consensus problem . In this model, each individual particle explores the energy landscape given by $f(x)$, broadcasting its current value to the rest of the ensemble through a weighted average. This iterated interaction generates trajectories which flock towards a consensus point which correspond to a global minimizer of $f(x)$, hence the name \textit{Consensus Based Optimization} (CBO). We refer to \cite{cbo40,cbo41} for two recent surveys on the topic. The dynamics of existing CBO models are governed by stochastic differential equations with Wiener noise \cite{cbo1,cbo2,cbo3}. Hence, we can resort to a toolbox from stochastic calculus and stochastic numerics  to perform analysis of these models.  This amenability of CBO models to theoretical as well as numerical  analysis differentiates them from other agent based optimization algorithms.  

In this paper, we propose a  new CBO model which is governed by jump-diffusion stochastic differential equations. This means randomness in the dynamics of the proposed CBO models comes from Wiener noise  as well as compound Poisson process. The following are the contributions of this paper:
\begin{itemize}
    \item[(i)] We prove the well-posedness of the interacting-particle system and of its mean-field limit driven by jump-diffusion SDEs and convergence of the mean-field SDEs to the global minimum. The approach to study well-posedness and convergence to the global minimum is similar to \cite{cbo2} but adapted to the jump-diffusion case with time-dependent coefficients.
    
    \item[(ii)] The major contribution of the paper is that we prove mean-square convergence of the interacting particle system to the mean-field limit when number of particles, $N$, tend to $\infty$. This also implies convergence of the particle system towards the mean-field limit in $2-$Wasserstein metric. Let us emphasize that we prove this result for quadratically growing objective function. We also study convergence of the implementable discretized particle system towards the jump-diffusion SDEs as the discretization step, $h$, goes to $0$. Our results can be utilized for the earlier CBO models \cite{cbo1,cbo2,cbo3}.  
    
    \item[(iii)] As illustrated in the numerical experiments, the addition of a jump-diffusion process in the particle system leads to a more effective exploration of the energy landscape. This particularly relevant when a good prior knowledge of the optimal solution for initialization of the CBO is not available.  
\end{itemize}

As was highlighted in \cite[Remark 3.2]{cbo2}, it is not straightforward to prove convergence of the interacting particle system towards its mean-field limit, even after proving uniform in $N$ moment bound of the solutions of the SDEs driving particles system. Convergence results of this type have been proved for special cases of compact manifolds (see \cite{cbo34} for compact hypersurfaces and \cite{cbo51} for Stiefel manifolds)  and  globally Lipschitz continuous objective functions.  In this case, not only the objective function is bounded but also particles are evolving on a compact set. Under the assumptions on the objective function as in our paper, in the diffusion case weak convergence of the empirical measure of a particle system to the law of the corresponding mean field SDEs has been proved in \cite{cbo41, cbo52}  exploiting Prokhorov's theorem. 
Here we prove  convergence of the particle system to the mean-field SDEs  in the mean-square sense for a quadratically growing locally-Lipschitz objective function defined on $\mathbb{R}^{d}$.  

Furthermore, practical implementation of the particle system corresponding to a CBO model needs a numerical approximation in the mean-square sense. We utilize an explicit Euler scheme to implement the proposed jump-diffusion CBO model. This leads to  the question whether the Euler scheme converges to the CBO model taking into account that the coefficients of the particle system are not globally Lipschitz and the Lipschitz constants grow exponentially when the objective function is not bounded. At the same time, the coefficients of the particle system have linear growth at infinity.  In the case of jump-diffusion SDEs, earlier works either showed convergence of the Euler scheme in the case of globally Lipschitz coefficients \cite{cbo28} or proposed special schemes in the case of non-globally Lipschitz coefficients with super-linear growth, e.g. a tamed Euler scheme \cite{cbo15}. Here we prove mean-square convergence of the Euler scheme and we show that this convergence is uniform in the number of particles $N$, i.e. the choice of a discretization time-step $h$ is independent of $N$. Our convergence result also holds for earlier CBO models \cite{cbo1,cbo2,cbo3}.

In Section \ref{sec_lit_rev}, we first present a review of  existing CBO models and then describe our CBO model driven by jump-diffusion SDEs. We also formally introduce  mean-field limit of the new CBO model. In Section~\ref{sec_wel_pos}, we focus on  well-posedness of the interacting particle system behind the new CBO model and its mean-field limit. In Section~\ref{cbo_conv_res}, we discuss convergence of the mean field limit towards a point in $\mathbb{R}^{d}$ which approximates the global minimum, convergence of the interacting particle system towards mean field limit, and convergence of the implementable discretized particle system towards the  particle system.  We present results of numerical experiments in Section~\ref{cbo_num_exp}  to compare performance of our model and  the existing CBO models.   

Throughout the paper, $C$ is a floating constant which may vary at different places. We denote $(a\cdot b)$ as dot product between two vectors, $a,b \in \mathbb{R}^{d}$. We will omit brackets $()$ wherever it does not lead to any confusion.

\section{ CBO models : existing and new}\label{sec_lit_rev}
In Section~\ref{sec_ex_cbo}, we review the existing CBO models. In Section~\ref{sec_our_mod}, we introduce a new CBO model driven by jump-diffusion SDEs and and discuss potential advantages of adding jumps to CBO models which are confirmed by numerical experiments in Section~\ref{cbo_num_exp}. The numerical experiments of Section~\ref{cbo_num_exp} are conducted using the Euler scheme presented in Section~\ref{sec_our_mod}.   
\subsection{Review of the existing CBO models}\label{sec_ex_cbo}
Let $N \in \mathbb{N}$ denote the number of agents with position vector, $X^{i}_{N}(t) \in \mathbb{R}^{d}$, $i=1,\dots,N$. The following model was proposed in \cite{cbo1}:
\begin{align}\label{cbos1.2}
    dX^{i}_{N}(t) &= -\beta(X^{i}_{N}(t) - \bar{X}^{\alpha,f}_{N}(t))H^{\epsilon}(f(X^{i}_{N}(t)) - f(\bar{X}^{\alpha,f}_{N}(t)))dt \nonumber \\ & \;\;\;\; + \sqrt{2}\sigma \vert X^{i}_{N}(t) -\bar{X}^{\alpha,f}_{N}(t)\vert dW^{i}(t),\;\;\;\;i = 1,\dots,N,
\end{align}
where $H^{\epsilon} : \mathbb{R} \rightarrow \mathbb{R}$ is a smooth regularization of the Heaviside function, $W^{i}(t)$, $i=1,\ldots , N,$ represent $N-$independent $d$-dimensional standard Wiener processes, $\beta> 0$, $\sigma > 0 $, and $\bar{X}^{\alpha,f}_{N}(t)$  is given by
\begin{equation} \label{cbo2.2}
    \bar{X}^{\alpha,f}_{N}(t) = \frac{\sum_{i =1}^{N}X^{i}_{N}(t)w_{f}^{\alpha}(X^{i}_{N}(t))}{\sum_{i =1}^{N}w_{f}^{\alpha}(X^{i}_{N}(t))},
\end{equation}
with $w_{f}^{\alpha}(x) = \exp{(-\alpha f(x))}$, $\alpha > 0$.   

Each particle $X^{i}_{N}$ at time $t$ is assigned an opinion $f(X^{i}_{N}(t))$. The lesser the value of $f$ for a particle, the more is the influence of that particle, i.e. the more weight is assigned to that  particle at that time as can be seen in (\ref{cbo2.2}) of the instantaneous weighted average. If the value $f(X^{i}_{N}(t))$ of a particle $X^{i}_{N}$ at time $t$ is greater than the value $f(\bar{X}_{N}^{\alpha,f}(t))$ at the instantaneous weighted average $\bar{X}_{N}^{\alpha, f}(t)$ then the regularised Heaviside function forces the particle $X^{i}_{N}$ to drift  towards $\bar{X}_{N}^{\alpha,f}$.  If the opinion of $i$-th particle matters more among the interacting particles, i.e. the value $f(X^{i}_{N}(t))$ is less than $f(\bar{X}^{i}_{N}(t))$, then it is not  beneficial for it to move towards $\bar{X}_{N}^{\alpha, f}$. The noise term is added to explore the space $\mathbb{R}^{d}$ and to avoid non-uniform consensus.
The noise intensity induced in the dynamics of the $i-$th  particle at time $t$  takes into account the distance of the particle from the instantaneous weighted average, $\bar{X}_{N}^{\alpha, f}(t)$. Over a period of time as the particles start moving towards a consensus opinion, the coefficients in (\ref{cbos1.2}) go to zero. 

One can observe that the more influential opinion a particular particle has, the higher is the weight assigned to that particle in the instantaneous weighted average (\ref{cbo2.2}). Based on this logic, in \cite{cbo2} the authors dropped the regularised Heaviside function in the drift coefficient and the model (\ref{cbos1.2}) was simplified as follows:
\begin{equation}\label{cbos1.3}
    dX^{i}_{N}(t) = -\beta (X^{i}_{N}(t) -\bar{X}_{N}^{\alpha,f}(t)) dt + \sigma \vert X^{i}_{N}(t) - \bar{X}_{N}^{\alpha,f}(t)\vert dW^{i}(t),\;\;\; i = 1,\dots,N,
\end{equation}
with $\beta$, $ \sigma$,  $\bar{X}_{N}^{\alpha,f}$ as in (\ref{cbos1.2})-(\ref{cbo2.2}).

The major drawback of the consensus based models (\ref{cbos1.2}) and (\ref{cbos1.3}) is that the parameters $\beta$ and $\sigma$ are dependent on the dimension $d$. To illustrate this fact, we replace $\bar{X}_{N}^{\alpha,f}$ in (\ref{cbos1.3}) by a fixed vector $V \in \mathbb{R}^{d}$. Then, using Ito's formula, we have
\begin{equation}
\frac{d}{dt}\mathbb{E}|X^{i}_{N}(t)-V|^{2}  = (-2\beta + \sigma^{2}d)\mathbb{E}|X^{i}_{N}(t)-V|^{2},\;\;\;\; i = 1,\dots,N.
\end{equation}
As one can notice, for particles to reach the consensus point whose position  vector is $V$, one needs $2\beta > d\sigma^{2}$. To overcome this deficiency, the authors of \cite{cbo3} proposed the following model which is based on component-wise noise intensity instead of isotropic noise used in (\ref{cbos1.2}) and (\ref{cbos1.3}):
\begin{equation}\label{cbos1.5}
    dX^{i}_{N}(t) = -\beta (X^{i}_{N}(t) - \bar{X}_{N}^{\alpha,f}(t)) dt + \sqrt{2}\sigma\diag(X^{i}_{N}(t) - \bar{X}_{N}^{\alpha,f}(t)) dW^{i}(t), \;\;\;\; i =1,\dots,N,
\end{equation}
where $\beta, \sigma$, and $\bar{X}_{N}^{\alpha,f} $ are as in (\ref{cbos1.2})-(\ref{cbo2.2}), and $\diag(U)$ is a diagonal matrix whose diagonal is a vector $U \in \mathbb{R}^{d}$.
Now, if we replace $\bar{X}_{N}^{\alpha, f}$ by a fixed vector $V$ and then use Ito's formula for (\ref{cbos1.5}), we get
\begin{align}
    \frac{d}{dt}\mathbb{E}|X^{i}_{N}(t) -V|^{2} & = -2\beta\mathbb{E}|X^{i}_{N}(t) -V|^{2} + \sigma^{2}\mathbb{E}\sum\limits_{j=1}^{d}(X^{i}_{N}(t) - V)_{j}^{2} \nonumber \\ & =(-2\beta + \sigma^{2})\mathbb{E}|X^{i}_{N}(t) - V|^{2},\;\;\;\;i=1,\dots,N,
\end{align}
where $(X_{N}^{i}(t) - V)_{j} $ denotes the  $j-$th component of $(X_{N}^{i}(t) -V)$. It is clear that in this model there is no dimensional restriction on $\beta$ and $\sigma$.

Other CBO models \cite{cbo4,cbo5} are based on interacting particles driven by common noise. Since the same noise drives all the particles, the exploration is not effective. Therefore, they are not scalable with respect to dimension and do not perform well in contrast to the CBO models (\ref{cbos1.2}), (\ref{cbos1.3}), (\ref{cbos1.5}) and model introduced in Section~\ref{sec_our_mod}. This fact is demonstrated in experiments in Section~\ref{cbo_num_exp}.

\subsection{Jump-diffusion CBO models}\label{sec_our_mod}


Let us consider the following jump-diffusion model:
\begin{align}\label{cbos1.6}
    dX^{i}_{N}(t) &= -\beta(t)(X^{i}_{N}(t^{}) - \bar{X}_{N}(t^{}))dt + \sqrt{2}\sigma(t) \diag(X^{i}_{N}(t^{})-\bar{X}_{N}(t^{}))dW^{i}(t) \nonumber  \\ &\;\;\;\; + \gamma(t)\diag(X^{i}_{N}(t^{-}) -\bar{X}_{N}(t^{-}))dJ^{i}(t), 
    \;\; i=1,\dots,N, 
\end{align}
with
\begin{equation} \label{cbo_neweq_2.8}
J^{i}(t) = \sum\limits_{j=1}^{N^{i}(t)}Z^{i}_{j},
\end{equation}
where $N^{i}(t)$, $i=1\dots,N$ are $N-$independent Poisson processes with jump intensity $\lambda$ and $Z_{j}^{i} = (Z_{j,1}^{i},\dots,Z_{j,d}^{i})^{\top}$  are i.i.d. $d$-dimensional random variables denoting $j-$th jump by $i-$th particle and $Z_{j}^{i} \sim Z$. The distribution of $Z$ is called as jump size distribution. For the sake of convenience, we write $Z_{l}$ as the $l$-th component of vector $Z$. We assume that each component $Z_{l}$ of $Z$ is also i.i.d. random variable and distributed as
\begin{equation}
    Z_{l} \sim \Zstroke,
\end{equation}
where $\Zstroke $ is an $\mathbb{R}-$valued random variable whose  probability density is given by $\rho_{\zstroke}(\zstroke)$ such that $\mathbb{E}(\Zstroke) = \int_{\mathbb{R}}\zstroke \rho_{\zstroke}(\zstroke)d\zstroke = 0$. We also denote the probability density of $Z$ as $\rho_{z}(z) = \prod_{l=1}^{d}\rho_{\zstroke}(z_{l})
$. Note that $\mathbb{E}(Z)$ is a $d-$dimensional zero vector, since each $Z_{l}$ is distributed as $\Zstroke$. The Wiener processes $W^{i}(t)$, the Poisson processes $N^{i}(t)$, $i = 1\dots, N$ and the jump sizes $Z$ are assumed to be mutually independent (see further theoretical details concerning L\'{e}vy-driven SDEs in \cite{cbos11}).
 Also, $\beta(t)$, $\sigma(t), \gamma(t)$ are continuous functions and
\begin{equation} \label{cbos1.7}
    \bar{X}_{N}(t) = (\bar{X}^{1}_{N}(t),\dots, \bar{X}^{d}_{N}(t)) := \frac{\sum_{i=1}^{N}X^{i}_{N}(t)e^{-\alpha f(X^{i}_{N}(t))}}{\sum_{i=1}^{N}e^{-\alpha f(X^{i}_{N}(t))}},
\end{equation}
with $\alpha > 0$. Note that we have omitted $\alpha $ and $f$ of $\bar{X}_{N}^{\alpha,f}$ in the notation used in  (\ref{cbos1.6})  for the simplicity of writing.

We recall the meaning of the jump term
\begin{equation*}
\int_{0}^{t}\gamma(s)\diag(X^{i}(s^{-}) -\bar{X}_{N}(s^{-}))dJ^{i}(s)=
\sum_{j=1}^{N^{i}(t)}\gamma(\tau_{j})\diag(X^{i}(\tau_{j}^{-}) - \bar{X}_{N}(\tau_{j}^{-}))Z^{i}_{j} ,
  \end{equation*}where $\tau_{j}$ denotes the time of $j$-th jump of the Poisson process $N^{i}(t)$.  Thanks to the assumption  that $\mathbb{E}(\Zstroke) = 0$ \big(which in turn implies $\mathbb{E}(Z^{i}_{j,l}) = 0$,  $j=1,\dots,N^{i}(t)$, $i =1,\dots,N$, $l =1,\dots,d$\big), the above integral is a martingale, and hence (similar to Ito's integral term in (\ref{cbos1.6})) it does not bias trajectories of $X_{N}^{i}(t)$, $i=1,\dots,N$.

The jump diffusion SDEs (\ref{cbos1.6}) are different from (\ref{cbos1.5}) in the two ways:
\begin{itemize}
    \item The SDEs (\ref{cbos1.6}) are a consequence of interlacing of Ito's diffusion  by jumps arriving according to the Poisson process whose jump intensity is given by $\lambda$. 
    \item We take $\beta(t)$ as a continuous positive non-decreasing function of $t$ such that $\beta(t) \rightarrow \beta > 0$ as $t \rightarrow \infty$, $\sigma(t)$ as a continuous positive non-increasing function of $t$ such that $\sigma(t) \rightarrow \sigma > 0$ as $t \rightarrow \infty$ and $\gamma(t)$ as a continuous non-negative non-increasing function of $t$ such that $\gamma(t) \rightarrow \gamma \geq 0$ as $t \rightarrow \infty$. 
\end{itemize}
Although we analyse CBO model (\ref{cbos1.6}) with time-dependent parameters, a decision to take parameters time-dependent or not is problem specific. Note that the particles driven by SDEs (\ref{cbos1.6}) jump at different times with different jump sizes and jumps arrive according to the Poisson process with intensity $\lambda$.

We can also write the jump-diffusion SDEs (\ref{cbos1.6}) in terms of Poisson random measure \cite{cbos11} as
\begin{align}\label{cboeq1.8}
    dX^{i}_{N}(t) &= -\beta(t)(X^{i}_{N}(t^{}) -\bar{X}_{N}(t^{}))dt + \sqrt{2}\sigma(t)\diag(X^{i}_{N}(t^{}) -\bar{X}_{N}(t^{}))dW^{i}(t) \nonumber\\ &  \;\;\;\;+\int_{\mathbb{R}^{d}}\gamma(t)\diag(X^{i}_{N}(t^{-}) -\bar{X}_{N}(t^{-}))z\mathcal{N}^{i}(dt,dz),
\end{align}
where $\mathcal{N}^{i}(dt,dz)$, $i=1,\dots, N$, represent the independent Poisson random measures with intensity measure $\nu(dz)dt$ and $\nu(dz)$ is a L\'{e}vy measure which is finite in our case (\ref{cbos1.6}). Although for simplicity we introduced our model as (\ref{cbos1.6}), in proving well-posedness and convergence results we will make use of (\ref{cboeq1.8}).

We can formally write the mean field limit of the model (\ref{cbos1.6}) as the following McKean-Vlasov SDEs:
\begin{align}\label{cbomfsde}
    dX(t) &= -\beta(t)(X(t^{}) -\bar{X}(t^{}))dt + \sqrt{2}\sigma(t) \diag(X(t^{})-\bar{X}(t^{}))dW(t) \nonumber \\
    &\;\;\;\; +\gamma(t)\diag(X(t^{-})   -\bar{X}(t^{-}))dJ(t),
\end{align}
where $J(t) = \sum_{j=1}^{N(t)}Z_{j}$, $N(t)$ is a Poisson process with intensity $\lambda$, and
\begin{align}\label{eqcbo2.12}
\bar{X}(t) := \bar{X}^{\mathcal{L}_{X(t)}} = \frac{\int_{\mathbb{R}^{d}} xe^{-\alpha f(x)}\mathcal{L}_{X(t)}(dx)}{\int_{\mathbb{R}^{d}}e^{-\alpha f(x)}\mathcal{L}_{X(t)}(dx)} =  \frac{\mathbb{E}\big(X(t)e^{-\alpha f(X(t))}\big)}{\mathbb{E}\big(e^{-\alpha f(X(t))}\big)}, 
\end{align}
with $\mathcal{L}_{X(t)} := \text{Law}(X(t))$.
We can rewrite the mean field jump diffusion SDEs (\ref{cbomfsde}) in terms of Poisson random measure as
\begin{align}\label{cbomfsdep}
    dX(t) &= -\beta(t)(X(t^{}) - \bar{X}(t^{}))dt + \sqrt{2}\sigma(t)\diag(X(t^{}) - \bar{X}(t^{}))dW(t) \nonumber \\ 
    &\;\;\;\; + \gamma(t) \int_{\mathbb{R}^{d}}\diag(X(t^{-}) - \bar{X}(t^{-}))z\mathcal{N}(dt,dz).
\end{align}

\subsubsection{Other jump-diffusion CBO models}

Although the aim of the paper is it to analyse the CBO model (\ref{cboeq1.8}), we discuss three other jump-diffusion CBO models of interest.

\textbf{Additional Model 1 :} Writing (\ref{cbos1.6}) in terms of Poisson random measure suggests that we can also consider an infinite activity L\'{e}vy process, e.g. an $\alpha-$stable process,  to introduce jumps in dynamics of particles. We can write the CBO model as
\begin{align}\label{}
    dX^{i}_{N}(t) &= -\beta(t)(X^{i}_{N}(t^{}) -\bar{X}_{N}(t^{}))dt + \sqrt{2}\sigma(t)\diag(X^{i}_{N}(t^{}) -\bar{X}_{N}(t^{}))dW^{i}(t) \nonumber\\ &  \;\;\;\;+\int_{\mathbb{R}^{d}}\gamma(t)\diag(X^{i}_{N}(t^{-}) -\bar{X}_{N}(t^{-}))z\mathcal{N}^{i}(dt,dz),
\end{align}

However, numerical approximation of SDEs driven by infinite activity L\'{e}vy processes is computationally more expensive (see e.g. \cite{cbo28, cbos12}), hence  it can be detrimental for the overall CBO performance.

\textbf{Additional Model 2 :} In the SDEs (\ref{cbos1.6}), the intensity of Poisson process $\lambda$ is constant. If we take jump intensity as $\lambda(t) $, i.e. a function of $t$ then the corresponding SDEs will be as follows:
\begin{align}\label{cbos1.9}
    dX^{i}(t) &= -\beta(t)(X^{i}_{N}(t^{}) - \bar{X}_{N}(t^{}))dt + \sqrt{2}\sigma(t) \diag(X^{i}_{N}(t^{})-\bar{X}(t^{}))dW^{i}(t) \nonumber  \\ &\;\;\;\; + \diag(X^{i}_{N}(t^{-}) -\bar{X}_{N}(t^{-}))dJ^{i}(t), 
    \;\; i=1,\dots,N, 
\end{align}
where all the notation are as in (\ref{cbos1.6}) and (\ref{cbos1.7}) except here the intensity of the Poisson processes $N^{i}(t)$ is a time-dependent function $\lambda(t)$. It is assumed that $\lambda(t)$ is a decreasing function such that $\lambda(t) \rightarrow 0$ as $t \rightarrow \infty$. Also, in comparison with (\ref{cbos1.6}), there is no $\gamma(t)$ in the jump component of (\ref{cbos1.9}). 
 Note that, the compound Poisson process with constant jump intensity $\lambda $ is a L\'{e}vy process but with time-dependent jump intensity $\lambda(t)$, it is not a L\'{e}vy process, rather it is an additive process. Additive process is a generalization of L\'{e}vy process which satisfies all conditions of L\'{e}vy process except stationarity of increments \cite{cbos14}. The SDEs (\ref{cbos1.9}) present another jump-diffusion CBO model driven by additive process. The analysis of model (\ref{cbos1.9})  follows similar arguments since the jump-diffusion SDEs (\ref{cbos1.9}) can also be written in terms of the Poisson random measure with intensity measure $\nu_{t}(dz)dt $, where $(\nu_{t})_{t\geq 0}$ is a family of L\'{e}vy measures.

\textbf{Additional Model 3 :} In model (\ref{cboeq1.8}), the particles have idiosyncratic noise which means they are driven by different Wiener processes and different  compound Poisson processes. Instead, we can have a different jump-diffusion model in which the same Poisson noise drives particle system but jumps sizes still independently vary for all particles. This means jumps arrive at the same time for all particles,  but particles jump with different jump-sizes. We can write CBO model as
\begin{align} \label{cbo_neweq_2.17}
     dX^{i}_{N}(t) &= -\beta(t)(X^{i}_{N}(t^{}) -\bar{X}_{N}(t^{}))dt + \sqrt{2}\sigma(t)\diag(X^{i}_{N}(t^{}) -\bar{X}_{N}(t^{}))dW^{i}(t) \nonumber\\ &  \;\;\;\;+\int_{\mathbb{R}^{d}}\gamma(t)\diag(X^{i}_{N}(t^{-}) -\bar{X}_{N}(t^{-}))z\mathcal{N}^{}(dt,dz).
\end{align}
We compare performance of the jump-diffusion CBO models (\ref{cboeq1.8}) and (\ref{cbo_neweq_2.17}) in Section~\ref{cbo_num_exp}. 


\subsubsection{Discussion}\label{cbo_sec_disc}
Firstly, we will discuss dependence  of the parameters $\beta(t)$, $\sigma(t)$, $\gamma(t)$ and $\lambda$ on dimension $d$. The independent and identical distribution of $Z_{l}$, which denotes the $l-$th component of $Z$, result in the non-dependency of parameters on dimension in the similar manner as for the model (\ref{cbos1.5}). We illustrate this fact by fixing a vector $V \in \mathbb{R}^{d}$ and replacing $\bar{X}_{N}$ in (\ref{cboeq1.8}) by $V$ then using Ito's formula and the assumption made on $\rho_{\zstroke}(\zstroke)$, we have
\begin{align}
    \frac{d}{dt}\mathbb{E}|X^{i}_{N}(t) - V|^{2} &= -2 \beta(t)\mathbb{E}|X^{i}_{N}(t) - V|^{2} + \sigma^{2}(t)\sum\limits_{j =1}^{d}\mathbb{E}(X^{i}_{N}(t) - V)_{j}^{2} \nonumber \\ & \;\;\;\; + \lambda \int_{\mathbb{R}^{d}}\big(|X^{i}_{N}(t) - V + \gamma(t)\diag(X^{i}_{N}(t) - V)z|^{2} - |X^{i}_{N}(t) -V|^{2}\big)\rho_{z}(z)dz \nonumber \\ & =  (-2 \beta(t) + \sigma^{2}(t))\mathbb{E}|X^{i}_{N}(t) - V|^{2} + \lambda\int_{\mathbb{R}^{d}}\gamma^{2}(t)|\diag(X^{i}_{N}(t)-V)z|^{2}\rho_{z}(z)dz 
    \nonumber \\ & = (-2 \beta(t) + \sigma^{2}(t))\mathbb{E}|X^{i}_{N}(t) - V|^{2} + \lambda \gamma^{2}(t)\sum\limits_{j=1}^{d}\int_{\mathbb{R}^{d}}(X^{i}_{N}(t)-V)_{j}^{2}z_{j}^{2}\prod_{l=1}^{d}\rho_{\zstroke}(z_{l})dz
    \nonumber \\ & = \big(-2 \beta(t) + \sigma^{2}(t) + \lambda \gamma^{2}(t)\mathbb{E}(\Zstroke^{2})\big)\mathbb{E}|X^{i}_{N}(t) - V|^{2}. \label{cboeq2.16}
\end{align}
We can choose $\beta(t)$, $\sigma(t)$, $\gamma(t)$, $\lambda$ and distribution of $\Zstroke$ guaranteeing that there is a $t_{*} \geq 0$ such that $-2\beta(t) + \sigma^{2}(t)+ \lambda \gamma^{2}(t)\mathbb{E}(\Zstroke^{2}) < 0 $ for all $t \geq t_{*}$ and such a choice is independent of $d$. It is clear from (\ref{cboeq2.16}) that with this choice, $\mathbb{E}|X^{i}_{N}(t)-V|^{2}$, $i =1,\dots,N$, decay in time as $t\rightarrow \infty$. 

In the previous CBO models, there were only two terms namely, the drift term and the diffusion term. The drift tries to take the particles towards their instantaneous weighted average. The diffusion term helps in exploration of the state space with the aim to find a state with better weighted average than the current one. The model (\ref{cbos1.6}) 
contains one extra term, which we call the jump term. 
Jumps help in intensifying the search in a search space and aids in avoiding premature convergence or trapping in  local minima. This results in more effective use of the interaction of particles. 

Moreover, the effect of jumps decays with time in (\ref{cbos1.6}) by virtue of decreasing $\gamma (t)$. 
The reason for considering the model (\ref{cbos1.6}) where jumps affect only the initial period of time is that we want particles to explore more space faster at the beginning of simulation and, as soon as the weighted average of particles is in a vicinity of the global minimum, we do not want jumps to affect convergence of particles towards that consensus point lying in the close neighbourhood of the global minimum. Therefore, the time-dependent parameters and degeneracy of the coefficients  help in exploiting the searched space.  

As a consequence, the jump-diffusion noise and degenerate time-dependent coefficients in  model (\ref{cbos1.6}) 
may help in keeping the  balance of \textbf{\textit{exploration}} and \textbf{\textit{exploitation}} by interacting particles over a period of time. We will continue this discussion on exploration and exploitation 
in Section~\ref{cbo_num_exp}, where the proposed CBO method is tested. 

\subsubsection{Implementation}\label{subsec_implemen}

Let $0=t_{0}<\dots<t_{n}=T$ be a uniform partition of the time interval $[0,T]$ into $n $ sub-intervals such that $h:= t_{k+1} -t_{k}$, $k =0,\dots, n-1$ and $T = nh$. To approximate (\ref{cbos1.6}), we construct a Markov chain $(Y_{N}^{i}(t_{k}))$, $ k = 1,\dots, n$, using the following Euler scheme:
\begin{align}\label{cbo_dis_ns}
    Y^{i}_{N}(t_{k+1}) &= Y_{N}^{i}(t_{k}) - \beta(t_{k})(Y^{i}_{N}(t_{k}) - \bar{Y}_{N}(t_{k}) ) h + \sigma(t_{k})\diag(Y^{i}_{N}(t_{k})- \bar{Y}_{N}(t_{k}))\Delta W(t_{k})\nonumber \\& \;\;\;\;+ \gamma(t_{k})\sum\limits_{j = N^{i}(t_{k})+1}^{N^{i}(t_{k+1})}\diag(Y^{i}_{N}(t_{k}) -\bar{Y}_{N}(t_{k})) Z^{i}_{j},
\end{align}
where $\Delta W(t_{k}) = W(t_{k+1}) - W(t_{k})$ has Gaussian distribution with mean $0$ and variance $h$, $Z^{i}_{j}$ denotes $j-$th jump size of the $i-$th particle, $N^i(t)$ are independent Poisson processes with jump intensity $\lambda$, 
and
\begin{align}\label{cbo_e2.18}
    \bar{Y}_{N}(t) = \sum\limits_{i=1}^{N}Y^{i}_{N}(t)\frac{e^{-\alpha f(Y^{i}_{N}(t))}}{\sum_{j=1}^{N}e^{-\alpha f(Y^{i}_{N}(t))}}.
\end{align}
To implement the discretization scheme we initialize the $N\times d$ matrix $Y$ at time $t_0=0$, and update it for $n$ iterations using (\ref{cbo_dis_ns}) by calculating (\ref{cbo_e2.18}) at each iteration.  
The code to implement above numerical scheme utilizing $N\times d$ matrix, which allows to save memory and time in computations, is available on  \href{https://github.com/akashspace/Consensus-based-opmization}{github}. We will discuss the convergence of scheme (\ref{cbo_dis_ns}) in Subsection~\ref{cbo_conv_ns}.


  


\section{Well-posedness results}\label{sec_wel_pos}
In Section~\ref{sec_well_pos_1}, we discuss well-posedness of the interacting particle system (\ref{cboeq1.8}) and prove moment bound for this system. In Section~\ref{sec_well_pos_2}, we prove well-posedness and moment bound of the mean field limit (\ref{cbomfsdep}) of the particle system (\ref{cboeq1.8}). 
\subsection{Well-posedness of the jump-diffusion particle system}\label{sec_well_pos_1}
This section is focused on showing existence and uniqueness of the solution of (\ref{cboeq1.8}).
    We first introduce the notation which are required in this section. 
    
    Let us denote $\textbf{x}_{N} := (x_{N}^{1},\dots,x_{N}^{N})^{\top} \in \mathbb{R}^{Nd}$,  $\bar{\textbf{x}}_{N} = \sum_{i=1}^{N}x^{i}_{N}e^{-\alpha f(x^{i}_{N})}/\sum_{j=1}^{N}e^{-\alpha f(x^{j}_{N})}$, $\textbf{W}(t) := (W^{1}_{}(t),\dots,W_{N}^{}(t))^{\top}$, $\textbf{F}_{N}(\textbf{x}_{N}) := \big( F^{1}_{N}(\textbf{x}_{N}),\dots,F^{N}_{N}(\textbf{x}_{N})\big)^{\top} \in \mathbb{R}^{Nd}$ with $F_{N}^{i}(\textbf{x}_{N}) = (x_{N}^{i} - \bar{x}_{N}) \in \mathbb{R}^{d}$ for all $i = 1,\dots,N$,  $\textbf{G}_{N}(\textbf{x}_{N}) : = \diag(\textbf{F}_{N}(\textbf{x}_{N})) \in \mathbb{R}^{Nd\times Nd}$ and $\textbf{J}(t) = ({J}^{1}(t),\dots,{J}^{N}(t))$, where $J^{i}(t)$ is from (\ref{cbo_neweq_2.8}) which implies $\int_{0}^{t}\gamma(t)\diag(F^{i}_{N}(\textbf{x}_{N}^{i}))d{J}^{i}(t) = \int_{0}^{t}\int_{\mathbb{R}^{d}}\diag(F^{i}_{N}(\textbf{x}_{N}))z\mathcal{N}^{i}(dt,dz)$. 
    Let us represent $\ell(dz)$ as the Lebesgue measure of $dz$, and for the sake of convenience we will use $dz$ in place of $\ell(dz)$ whenever there is no confusion.
We can write the particle system (\ref{cboeq1.8}) using the above notation as
\begin{align}\label{cboeq3.1}
    d\textbf{X}_{N}(t) = \beta(t)\textbf{F}_{N}(\textbf{X}_{N}(t^{-}))dt + \sqrt{2}\sigma(t)\textbf{G}_{N}(\textbf{X}_{N}(t^{-}))d\textbf{W}(t) + \gamma(t)\textbf{G}_{N}(\textbf{X}_{N}(t^{-}))d\textbf{J}(t).
\end{align}
In order to show well-posedness of (\ref{cboeq3.1}), we need the following natural assumptions on the objective function $f$. Let 
\begin{equation}\label{cbo_eq_fm}
f_{m} := \inf f.
\end{equation}
\begin{assumption}\label{cboh3.1}
$f_{m} > 0$. 
\end{assumption}
\begin{assumption}\label{cboasu1.1}
$f : \mathbb{R}^{d} \rightarrow \mathbb{R}$ is locally Lipschtiz continuous, i.e. there exists a positive function $L(R)$ such that
\begin{equation*}
    |f(x) - f(y) | \leq L(R)|x-y|,
\end{equation*}
whenever $|x|$, $|y| \leq R$, $x$, $y \in \mathbb{R}^{d}$, $R>0$.
\end{assumption}
Assumption~\ref{cboasu1.1} is used for proving local Lipschitz continuity and linear growth of $F^{i}_{N}$ and $G^{i}_{N}$, $i=1,\dots,N$.  Let $B(R) = \{ x\in \mathbb{R}^{d}\;;\;|x| \leq R\}$. 
\begin{lemma}\label{cbolemma3.1}
Under Assumptions~\ref{cboh3.1}-\ref{cboasu1.1}, the following inequalities hold for any $\textbf{x}_{N}$, $\textbf{y}_{N} \in \mathbb{R}^{Nd}$ satisfying $\sup_{i=1,\dots,N}|x^{i}_{N}|, 
\sup_{i=1,\dots,N}|y^{i}_{N}| \leq R$ and for all $i = 1,\dots,N$:
\begin{enumerate}
    \item $ |F^{i}_{N}(\textbf{x}_{N}) -F^{i}_{N}(\textbf{y}_{N})|  \leq |x^{i}_{N} - y^{i}_{N}| + \frac{C(R)}{N^{1/2}}|\textbf{x}_{N} - \textbf{y}_{N}|,$
        \item $ |F^{i}_{N}(\textbf{x}_{N})|^{2} \leq 2(|x_{N}^{i}|^{2} + |\textbf{x}_{N}|^{2}), $
\end{enumerate}
where $C(R) =  e^{\alpha (|f|_{L_{\infty}(B(R))} - f_{m}})\big( 1+ \alpha R L(R)+ \alpha R L(R) e^{\alpha (|f|_{L_{\infty}(B(R))} - f_{m})})$.
\end{lemma}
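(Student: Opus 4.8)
The plan is to handle the two inequalities separately, the second one (the local Lipschitz estimate) being the substantial part. Throughout I would abbreviate $a_i = e^{-\alpha f(x_N^i)}$, $b_i = e^{-\alpha f(y_N^i)}$, $A = \sum_{j=1}^N a_j$, $B = \sum_{j=1}^N b_j$, and $M_R := |f|_{L_{\infty}(B(R))}$, so that $\bar{x}_N = A^{-1}\sum_i x_N^i a_i$ and $\bar{y}_N = B^{-1}\sum_i y_N^i b_i$. For the second inequality I would simply observe that $\bar{x}_N$ is a convex combination of $x_N^1,\dots,x_N^N$, since the weights $a_i/A$ are nonnegative and sum to one; hence $|\bar{x}_N| \le \max_i |x_N^i| \le |\textbf{x}_N|$. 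Combining this with $|F^i_N(\textbf{x}_N)|^2 = |x_N^i - \bar{x}_N|^2 \le 2|x_N^i|^2 + 2|\bar{x}_N|^2$ gives the claim at once; note that this part requires no restriction on $R$.

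For the first inequality, I would start from $F^i_N(\textbf{x}_N) - F^i_N(\textbf{y}_N) = (x_N^i - y_N^i) - (\bar{x}_N - \bar{y}_N)$, so that by the triangle inequality it suffices to prove $|\bar{x}_N - \bar{y}_N| \le C(R) N^{-1/2}|\textbf{x}_N - \textbf{y}_N|$. Two ingredients drive this. First, the two-sided bounds $e^{-\alpha M_R} \le a_i, b_i \le e^{-\alpha f_m}$ valid on $B(R)$ (here Assumption~\ref{cboh3.1} and the boundedness of $f$ on $B(R)$ enter), which in particular yield the crucial lower bounds $A, B \ge N e^{-\alpha M_R}$ on the denominators. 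Second, a Lipschitz bound on a single weight, $|a_i - b_i| \le \alpha e^{-\alpha f_m} L(R)|x_N^i - y_N^i|$, obtained by applying the mean value theorem to $t \mapsto e^{-\alpha t}$ together with the local Lipschitz continuity of $f$ from Assumption~\ref{cboasu1.1}.

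The core step is the algebraic decomposition
\[
\bar{x}_N - \bar{y}_N = \frac{\sum_i (x_N^i - y_N^i) a_i}{A} + \frac{\sum_i y_N^i(a_i - b_i)}{A} + \frac{\left(\sum_i y_N^i b_i\right)(B - A)}{AB},
\]
which isolates respectively the displacement $x_N^i - y_N^i$, the perturbation of the weights in the numerator, and the perturbation of the normalising denominator. I would bound each term by Cauchy--Schwarz, repeatedly using $|x_N^i|, |y_N^i| \le R$ (so $|\textbf{y}_N| \le R N^{1/2}$ and $(\sum_i a_i^2)^{1/2}, (\sum_i b_i^2)^{1/2} \le N^{1/2} e^{-\alpha f_m}$) together with the denominator lower bound. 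The first term yields $e^{\alpha(M_R - f_m)} N^{-1/2}|\textbf{x}_N - \textbf{y}_N|$ (the $1$ inside $C(R)$); the second, via $\sum_i |y_N^i|\,|a_i-b_i| \le \alpha e^{-\alpha f_m}L(R)|\textbf{y}_N|\,|\textbf{x}_N-\textbf{y}_N|$, yields $\alpha R L(R)\, e^{\alpha(M_R - f_m)} N^{-1/2}|\textbf{x}_N - \textbf{y}_N|$; and the third, using $|\sum_i y_N^i b_i| \le |\textbf{y}_N|(\sum_i b_i^2)^{1/2}$, the bound $|B - A| \le \alpha e^{-\alpha f_m} L(R) N^{1/2}|\textbf{x}_N - \textbf{y}_N|$, and $AB \ge N^2 e^{-2\alpha M_R}$, yields the remaining $\alpha R L(R)\, e^{2\alpha(M_R - f_m)} N^{-1/2}|\textbf{x}_N - \textbf{y}_N|$. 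Summing and factoring out $e^{\alpha(M_R - f_m)}$ reproduces $C(R)$ exactly.

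The main obstacle I anticipate is the third term: it is the only place where the perturbation of the denominator enters, and extracting the single factor $N^{-1/2}$ relies on a careful interplay between the $N^{1/2}$ coming from Cauchy--Schwarz on $|B - A|$, the $N^2$ from the denominator lower bound $AB \gtrsim N^2 e^{-2\alpha M_R}$, and the extra $N$ from bounding $|\sum_i y_N^i b_i|$. The bookkeeping of the exponential constants $e^{\alpha(M_R - f_m)}$ and its square is the delicate part, but conceptually everything reduces to Cauchy--Schwarz plus the lower bound on the denominator, which is precisely where $f_m > 0$ and the boundedness of $f$ on $B(R)$ are used.
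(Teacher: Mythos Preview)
Your proof is correct and follows essentially the same route as the paper's: the same triangle inequality reduction to bounding $|\bar{x}_N - \bar{y}_N|$, the same three-term decomposition of that difference, the same Lipschitz bound $|a_i - b_i|\le \alpha e^{-\alpha f_m}L(R)|x_N^i - y_N^i|$, and the same Cauchy--Schwarz bookkeeping to extract the $N^{-1/2}$ factor. The only cosmetic difference is that the paper obtains the denominator lower bound $A \ge N e^{-\alpha M_R}$ via Jensen's inequality applied to $\exp$, whereas you get it directly from $a_i \ge e^{-\alpha M_R}$; both yield the identical bound, and your version is arguably cleaner.
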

\begin{proof}
Let us deal with the first inequality above. We have
\begin{align*}
    |F^{i}_{N}(\textbf{x}_{N}) &- F^{i}_{N}(\textbf{y}_{N})| \leq |x^{i}_{N} - y^{i}_{N}| + \Bigg| \frac{\sum_{i=1}^{N}x^{i}_{N}e^{-\alpha f(x^{i}_{N})}}{\sum_{i=1}^{N}e^{-\alpha f(x^{i}_{N})}} -  \frac{\sum_{i=1}^{N}y^{i}_{N}e^{-\alpha f(y^{i}_{N})}}{\sum_{i=1}^{N}e^{-\alpha f(y^{i}_{N})}}\Bigg| \\ & \leq |x^{i}_{N} - y^{i}_{N}| + \frac{1}{\sum_{j=1}^{N}e^{-\alpha f(x^{j}_{N})}}\Bigg|\sum\limits_{i=1}^{N}\bigg(x^{i}_{N}e^{-\alpha f(x^{i}_{N})} - y^{i}_{N}e^{-\alpha f(y^{i}_{N})}\bigg)\Bigg| \\ & \;\;\;\; + \sum\limits_{i=1}^{N}|y^{i}_{N}|e^{-\alpha f(y^{i}_{N})}\Bigg| \frac{1}{\sum_{j=1}^{N}e^{-\alpha f(x^{j}_{N})}} - \frac{1}{\sum_{j=1}^{N}e^{-\alpha f(y^{j}_{N})}}\Bigg| \\ & \leq  |x^{i}_{N} - y^{i}_{N}| + \frac{1}{\sum_{j=1}^{N}e^{-\alpha f(x^{j}_{N})}}\Bigg(\Bigg|\sum\limits_{i=1}^{N}(x^{i}_{N} - y^{i}_{N})e^{-\alpha f(x^{i}_{N})}\Bigg| + \Bigg|\sum\limits_{i=1}^{N}y^{i}_{N}(e^{-\alpha f(x^{i}_{N})} - e^{-\alpha f(y^{i}_{N})})\Bigg|\Bigg) \\ & \;\;\;\; + \sum\limits_{i=1}^{N}|y^{i}_{N}|e^{-\alpha f(y^{i}_{N})}\Bigg| \frac{1}{\sum_{j=1}^{N}e^{-\alpha f(x^{j}_{N})}} - \frac{1}{\sum_{j=1}^{N}e^{-\alpha f(y^{j}_{N})}}\Bigg|. 
\end{align*}
Using Jensen's inequality, we have
\begin{align*}
    \frac{1}{\frac{1}{N}\sum_{i=1}^{N}e^{-\alpha f(x^{i}_{N})}} &\leq e^{\alpha \frac{1}{N}\sum_{i=1}^{N}f(x^{i}_{N})}.
\end{align*}
Using the Cauchy-Bunyakowsky-Shwartz inequality, we get
\begin{align*}
     &|F^{i}_{N}(\textbf{x}_{N}) - F^{i}_{N}(\textbf{y}_{N})| \leq |x^{i}_{N} - y^{i}_{N}| + e^{\alpha |f|_{L_{\infty}(B(R))}}e^{-\alpha f_{m}}\frac{1}{N}\sum_{i=1}^{N}\big|x^{i}_{N} - y^{i}_{N}\big| + \alpha e^{-\alpha f_{m}}e^{\alpha |f|_{L_{\infty}(B(R))}}L(R)\\ &\times\bigg(\frac{1}{N}\sum\limits_{i=1}^{N}|y^{i}_{N}|^{2}\bigg)^{1/2}\bigg(\frac{1}{N}\sum\limits_{i=1}^{N}|x^{i}_{N} - y^{i}_{N}|^{2}\bigg)^{1/2} +  \alpha e^{-2\alpha f_{m}}e^{2\alpha |f|_{L_{\infty}(B(R))}}\frac{L(R)}{N}\sum\limits_{i=1}^{N}|y^{i}_{N}| \sum\limits_{i=1}^{N}|x^{i}_{N} - y^{i}_{N}|
     \\ & \leq |x^{i}_{N} - y^{i}_{N}| + e^{\alpha |f|_{L_{\infty}(B(R))}}e^{-\alpha f_{m}}\frac{1}{N}\sum_{i=1}^{N}\big|x^{i}_{N} - y^{i}_{N}\big| + \alpha e^{-\alpha f_{m}}e^{\alpha |f|_{L_{\infty}(B(R))}}R L(R)\bigg(\frac{1}{N}\sum\limits_{i=1}^{N}|x^{i}_{N} - y^{i}_{N}|^{2}\bigg)^{1/2}  \\ & +  \alpha e^{-2\alpha f_{m}}e^{2\alpha |f|_{L_{\infty}(B(R))}}R L(R)\bigg(\frac{1}{N}\sum\limits_{i=1}^{N}|x^{i}_{N} - y^{i}_{N}|^{2}\bigg)^{1/2}
    \\ & \leq |x^{i}_{N} - y^{i}_{N}| + e^{\alpha (|f|_{L_{\infty}(B(R))} - f_{m})})\big( 1+ \alpha R L(R)+ \alpha R L(R) e^{\alpha (|f|_{L_{\infty}(B(R))} - f_{m})})\frac{1}{N^{1/2}}|\textbf{x}_{N} - \textbf{y}_{N}|.
\end{align*}
The second inequality directly follows from
\begin{align*}
    |F^{i}_{N}(\textbf{x}_{N})| \leq |x^{i}_{N}| + |\textbf{x}_{N}|.
\end{align*}
\end{proof}

\begin{theorem}\label{cbo_thrm_3.2}
Let the initial condition $\textbf{X}_{N}(0)$ of the jump-diffusion SDE (\ref{cbos1.6})  satisfy $\mathbb{E}|\textbf{X}_{N}(0)|^2  < \infty$ and  $\mathbb{E}|\Zstroke|^{2} < \infty$, then
the $Nd-$dimensional system (\ref{cbos1.6}) has a unique strong solution $\textbf{X}_{N}(t)$ under Assumptions~\ref{cboh3.1}-\ref{cboasu1.1}.
\end{theorem}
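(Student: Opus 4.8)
The plan is to use a localization (truncation) argument, since Lemma~\ref{cbolemma3.1} provides only \emph{local} Lipschitz continuity of the coefficients — the constant $C(R)$ grows with $R$ — but, crucially, the \emph{linear growth} bound in part~2 holds with a constant independent of $R$. Because $\textbf{G}_{N}(\textbf{x}_{N}) = \diag(\textbf{F}_{N}(\textbf{x}_{N}))$, the corresponding local Lipschitz and linear growth estimates for $\textbf{G}_{N}$ follow from those for $\textbf{F}_{N}$ up to harmless constants. Note also that, since $f_{m}>0$ and $f$ is finite, the denominator $\sum_{j}e^{-\alpha f(x^{j}_{N})}$ is strictly positive, so $\bar{\textbf{x}}_{N}$, and hence $\textbf{F}_{N}$ and $\textbf{G}_{N}$, are well-defined everywhere on $\mathbb{R}^{Nd}$.

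First I would truncate. For each $R>0$ introduce a cutoff $\chi_{R}:\mathbb{R}^{Nd}\to[0,1]$ with $\chi_{R}\equiv 1$ on $\{|\textbf{x}_{N}|\le R\}$ and $\chi_{R}\equiv 0$ on $\{|\textbf{x}_{N}|\ge R+1\}$, and set $\textbf{F}_{N}^{R} := \chi_{R}\textbf{F}_{N}$ and $\textbf{G}_{N}^{R} := \chi_{R}\textbf{G}_{N}$. By part~1 of Lemma~\ref{cbolemma3.1} (combined with the linear growth bound to control the product on the support of $\chi_R$) these truncated coefficients are \emph{globally} Lipschitz and bounded on $\mathbb{R}^{Nd}$. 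Since $\beta(t),\sigma(t),\gamma(t)$ are continuous and therefore bounded on $[0,T]$, the classical existence-and-uniqueness theory for jump-diffusion SDEs with globally Lipschitz coefficients (e.g.\ \cite{cbo28}), together with $\mathbb{E}|\textbf{X}_{N}(0)|^{2}<\infty$ and $\mathbb{E}|\Zstroke|^{2}<\infty$, yields a unique strong solution $\textbf{X}_{N}^{R}(t)$ of the truncated version of (\ref{cboeq3.1}) on $[0,T]$.

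Next I would patch the local solutions together. Define the exit times $\tau_{R} := \inf\{t\ge 0 : |\textbf{X}_{N}^{R}(t)|\ge R\}$. For $R'>R$ the coefficients of the two truncated systems coincide on $\{|\textbf{x}_{N}|\le R\}$, so pathwise uniqueness gives $\textbf{X}_{N}^{R'}(t)=\textbf{X}_{N}^{R}(t)$ for $t\le\tau_{R}$; in particular $\tau_{R}$ is non-decreasing in $R$ and the family is consistent. This defines a process $\textbf{X}_{N}(t)$ on the stochastic interval $[0,\tau_{\infty})$, with $\tau_{\infty}:=\lim_{R\to\infty}\tau_{R}$, which solves (\ref{cboeq3.1}) up to $\tau_{\infty}$ and is the unique such solution.

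It remains to establish non-explosion, i.e.\ $\tau_{\infty}=\infty$ a.s.; this is the main obstacle and the place where the $R$-independence of the linear-growth constant is essential. Applying the It\^o formula for jump-diffusions to $|\textbf{X}_{N}(t\wedge\tau_{R})|^{2}$ and taking expectations, the It\^o and compensated-Poisson martingale terms vanish and, since $\mathbb{E}(\Zstroke)=0$, the cross term in the jump part drops out, leaving the drift term, the diffusion trace term proportional to $\sigma^{2}(t)\,|\textbf{G}_{N}|^{2}$, and the jump compensator $\lambda\,\mathbb{E}\!\int_{\mathbb{R}^{d}}|\gamma(t)\textbf{G}_{N}(\textbf{X}_{N})z|^{2}\rho_{z}(z)\,dz$. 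Using part~2 of Lemma~\ref{cbolemma3.1} (and its analogue for $\textbf{G}_{N}$), boundedness of $\beta,\sigma,\gamma$ on $[0,T]$, and $\mathbb{E}|\Zstroke|^{2}<\infty$, each term is bounded by $C(1+\mathbb{E}|\textbf{X}_{N}(t\wedge\tau_{R})|^{2})$ with $C$ independent of $R$. Gronwall's inequality then gives $\mathbb{E}|\textbf{X}_{N}(t\wedge\tau_{R})|^{2}\le C_{T}$ uniformly in $R$ on $[0,T]$. Markov's inequality yields $\mathbb{P}(\tau_{R}\le T)\le C_{T}/R^{2}\to 0$ as $R\to\infty$, so $\tau_{\infty}\ge T$ a.s.; since $T$ is arbitrary, $\tau_{\infty}=\infty$ a.s. Hence $\textbf{X}_{N}$ is a global strong solution, and uniqueness is inherited from the uniqueness of each $\textbf{X}_{N}^{R}$.
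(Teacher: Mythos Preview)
Your argument is correct. The paper's own proof is much shorter: after observing that $|G^{i}_{N}(\textbf{x}_{N})-G^{i}_{N}(\textbf{y}_{N})| = |F^{i}_{N}(\textbf{x}_{N})-F^{i}_{N}(\textbf{y}_{N})|$ and computing $\int_{\mathbb{R}^{d}}|\diag(F^{i}_{N}(\textbf{x}_{N}))z|^{2}\rho_{z}(z)\,dz = |F^{i}_{N}(\textbf{x}_{N})|^{2}\,\mathbb{E}(\Zstroke^{2})$, it simply verifies via Lemma~\ref{cbolemma3.1} that the coefficients satisfy a local Lipschitz condition and a global linear-growth condition, and then invokes an off-the-shelf well-posedness result for jump-diffusion SDEs with such coefficients (\cite[Theorem~1]{cbo19}).

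What you have written is essentially the proof of that cited theorem, specialized to the present setting: truncate to globally Lipschitz coefficients, use classical well-posedness for the truncated system, patch via exit times, and rule out explosion by the It\^o--Gronwall argument that exploits the $R$-independence of the linear-growth constant. So the two approaches are not really different mathematically; yours is a self-contained unpacking of the black box the paper appeals to. The benefit of your route is that the reader sees exactly where each hypothesis ($\mathbb{E}|\textbf{X}_{N}(0)|^{2}<\infty$, $\mathbb{E}|\Zstroke|^{2}<\infty$, and the vanishing of the cross term via $\mathbb{E}\Zstroke=0$) is used; the benefit of the paper's route is brevity.
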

\begin{proof}
Note that 
$|G^{i}_{N}(\textbf{x}_{N}) - G^{i}_{N}(\textbf{y}_{N})| = |F^{i}_{N}(\textbf{x}_{N}) - F^{i}_{N}(\textbf{y}_{N})|$ and for all $i=1\dots,N$, 
\begin{align*} 
\int_{\mathbb{R}^{d}}|{F}^{i}_{N}(\textbf{x}_{N}){z}|^{2}\rho_{{z}}({z})d{z} &=\int_{\mathbb{R}^{d}}\sum\limits_{l=1}^{d}|(x_{N}^{i})_{l} - (y_{N}^{i})_{l}|^{2}|z^{}_{l}|^{2}\prod\limits_{k=1}^{d}\rho_{\zstroke}(z^{}_{k})d{z} \\ &= \sum\limits_{l=1}^{d}|(x_{N}^{i})_{l} - (y_{N}^{i})_{l}|^{2}\int_{\mathbb{R}^{d}}|z^{}_{l}|^{2}\prod\limits_{k=1}^{d}\rho_{\zstroke}(z^{}_{k})d{z} =
|{F}^{i}_{N}(\textbf{x}_{N})|^{2} \mathbb{E}(\Zstroke)^{2},
\end{align*}
where $(x^{i}_{N})_{l}$ means the $l-$th component of $d$-dimensional  vector $x^{i}_{N}$ and $z^{}_{l}$ means the $l-$th component of $d-$dimensional vector $z^{}$. Therefore, from Lemma~\ref{cbolemma3.1}, we can say that we have a positive function $K(R)$ of $R > 0$ such that
\begin{align*}
    |\textbf{F}_{N}(\textbf{x}_{N}) - \textbf{F}_{N}(\textbf{y}_{N}) |^{2} + |\textbf{G}_{N}(\textbf{x}_{N}) - \textbf{G}_{N}(\textbf{y}_{N}) |^{2}& + \sum_{i=1}^{N}\int_{\mathbb{R}^{d}}|\diag({F}^{i}_{N}(\textbf{x}_{N})-{F}^{i}_{N}(\textbf{y}_{N})){z}|^{2}\rho_{{z}}({z})d{z} \\ & \leq K(R) |\textbf{x}_{N}-\textbf{y}_{N}|,
\end{align*}
whenever $|\textbf{x}_{N}|$, $|\textbf{y}_{N}| \leq  R$. Moreover,
\begin{align*}
    |\textbf{F}_{N}(\textbf{x}_{N})|^{2} + |\textbf{G}_{N}(\textbf{x}_{N})|^{2} + \sum_{i=1}^{N}\int_{\mathbb{R}^{d}}|\diag({F}^{i}_{N}({x}_{N})){z}|^{2}\rho_{{z}}({z})d{z} \leq C|\textbf{x}_{N}|^{2},
\end{align*}
where $C$ is some positive constant independent of $|\textbf{x}_{N}| $.
Then the proof immediately follows from \cite[Theorem 1]{cbo19}. 

Consequently, by \cite[Lemma 2.3]{cbo15},  the following moment bound, provided $\mathbb{E}|\textbf{X}_{N}(0)|^{2p} <\infty$ and $\mathbb{E}|\textbf{Z}|^{2p} < \infty$, holds:
\begin{align}\label{cbo_eqn_3.2}
    \mathbb{E}\sup_{0\leq t\leq T}|\textbf{X}_{N}(t)|^{2p} \leq C_{N},
\end{align}
where $C_{N}$ may depend on $N$ and $p \geq 1$.
\end{proof}
In the last step of proof above, we highlighted that  $C_{N}$ may depend on $N$.  However, for convergence analysis in later sections we need an uniform in $N$ bound for $\sup_{i=1,\dots,N}\mathbb{E}\big(\sup_{t\in[0,T]}|X^{i}_{N}(t)|^{2p}\big)$, $p \geq 1$ which we prove under the following assumptions as in \cite{cbo2}.

\begin{assumption}\label{cboh3.2}
  There exists a positive constant $K_{f}$ such that
\begin{align*}
      |f(x) - f(y)| &\leq K_{f}(1+|x| + |y|)|x-y|, \;\;\text{for all}\;x, y , \in \mathbb{R}^{d}.
\end{align*}
\end{assumption}
\begin{assumption}\label{cboassu3.4}
There is a constant $K_{u} > 0$
\begin{align*}
f(x) - f_{m} &\leq K_{u}(1+ |x|^{2}), \;\; \text{for all}\; x \in \mathbb{R}^{d}.
\end{align*}
\end{assumption}
\begin{assumption}\label{cboasm1.4}
There exists constants $R>0$ and $K_{l} > 0$ such that
\begin{equation*}
    f(x) - f_{m} \geq K_{l}|x|^{2},\;\; |x|\geq R.
\end{equation*}
\end{assumption}
As one can see, we need a stronger Assumption~\ref{cboh3.2} as compared to Assumption~\ref{cboasu1.1} to obtain a moment bound uniform in $N$. The Assumptions~\ref{cboassu3.4}-\ref{cboasm1.4} are to make sure that objective function $f$ has quadratic growth at infinity. 

From \cite[Lemma 3.3]{cbo2}, we have the following result under Assumptions~\ref{cboh3.1}, \ref{cboh3.2}-\ref{cboasm1.4}:
\begin{align}\label{y4.2}
    \sum_{i=1}^{N}|x_{N}^{i}|^{2} \frac{e^{-\alpha f(x_{N}^{i})}}{\sum_{j=1}^{N}e^{-\alpha f(x_{N}^{j})}} \leq L_{1} + L_{2}\frac{1}{N}\sum_{i=1}^{N}|x_{N}^{i}|^{2},
\end{align}
where $L_{1} = R^{2}  + L_{2}$ and $L_{2} = 2\frac{K_{u}}{K_{l}}\Big(1 + \frac{1}{\alpha K_{l} R^{2}}\Big) $, $R$ is from Assumption~\ref{cboasm1.4}.

\begin{lemma}\label{cbolemma3.3}
Let Assumptions~\ref{cboh3.1}, \ref{cboh3.2}-\ref{cboasm1.4} be satisfied. Let $p\geq 1$, $\sup_{i=1,\dots,N}\mathbb{E}|X^{i}_{N}(0)|^{2p} < \infty $ and $\mathbb{E}|Z|^{2p} < \infty$. Then
\begin{equation*}
    \sup_{i\in\{1,\dots,N\}}\mathbb{E}\sup_{0\leq t\leq T}|X^{i}_{N}(t)|^{2p} \leq K_{m},
\end{equation*}
where $X_{N}^{i}(t)$ is from (\ref{cboeq1.8}) and $K_{m}$ is a positive constant independent of $N$.
\end{lemma}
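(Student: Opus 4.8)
The target bound must be \emph{uniform in} $N$, and the only source of $N$-dependence is the coupling of all particles through the weighted mean $\bar{X}_N$; hence the whole strategy hinges on controlling $\bar X_N$ by the a priori estimate (\ref{y4.2}). The plan is to (i) apply a Kunita-type moment inequality for jump-diffusion SDEs to each particle $X^i_N$ on $[0,t]$, (ii) bound every coefficient norm by $|X^i_N|$ and $|\bar X_N|$ and then bound $|\bar X_N|^{2p}$ through (\ref{y4.2}), and (iii) close a Gronwall argument in two stages, first on the $N$-average of the moments and then on the individual particles.

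First I would fix $i$ and apply the standard moment estimate for jump-diffusion SDEs (Kunita's inequality, see \cite{cbos11}) to the $i$-th component of (\ref{cboeq1.8}) over $[0,t]$, with jump coefficient $c^i(s,z)=\gamma(s)\diag(X^i_N(s^-)-\bar X_N(s^-))z$. Because $\beta,\sigma,\gamma$ are continuous they are bounded on $[0,T]$, so the drift, diffusion and jump coefficients are all bounded by $C(|X^i_N|+|\bar X_N|)$ up to the factor $|z|$ in the jump term. The inequality produces, besides the initial moment and the time integrals of $|X^i_N|^{2p}+|\bar X_N|^{2p}$ coming from drift and diffusion, two jump contributions of the form $\mathbb{E}(\int_0^t\!\int|c^i|^2\nu(dz)\,ds)^{p}$ and $\mathbb{E}\int_0^t\!\int|c^i|^{2p}\nu(dz)\,ds$. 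Since the $Z_\ell$ are i.i.d. with $\mathbb{E}(\Zstroke)=0$ and $\mathbb{E}|Z|^{2p}<\infty$, both integrate to expressions bounded by $C\,\mathbb{E}\int_0^t(|X^i_N|^{2p}+|\bar X_N|^{2p})\,ds$. I want to stress that it is exactly this route, applying the moment inequality to $X^i_N$ rather than applying Ito's formula to $|X^i_N|^{2p}$ and then Burkholder--Davis--Gundy, that makes the finite moment $\mathbb{E}|Z|^{2p}$ (and not $\mathbb{E}|Z|^{4p}$) sufficient for the jump term to close.

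Next I would control $\bar X_N$. As $\bar X_N$ is a convex combination of the $X^j_N$ with weights $e^{-\alpha f(X^j_N)}/\sum_k e^{-\alpha f(X^k_N)}$, Jensen's inequality gives $|\bar X_N|^2\le\sum_j|X^j_N|^2 e^{-\alpha f(X^j_N)}/\sum_k e^{-\alpha f(X^k_N)}$, and then (\ref{y4.2}) yields $|\bar X_N|^2\le L_1+L_2\frac1N\sum_j|X^j_N|^2$. Raising to the power $p$ and using convexity of $t\mapsto t^p$ twice gives $|\bar X_N|^{2p}\le C\big(1+\frac1N\sum_j|X^j_N|^{2p}\big)$. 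Substituting this and writing $\varphi_i(t):=\mathbb{E}\sup_{0\le s\le t}|X^i_N(s)|^{2p}$, the estimate of the previous step becomes $\varphi_i(t)\le C\,\mathbb{E}|X^i_N(0)|^{2p}+C\int_0^t\big(\varphi_i(s)+1+\Phi(s)\big)\,ds$ with $\Phi(t):=\frac1N\sum_{j}\varphi_j(t)$.

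Finally I would run Gronwall twice. Averaging the last inequality over $i$ gives $\Phi(t)\le C\Phi(0)+C\int_0^t(1+2\Phi(s))\,ds$; since $\Phi(0)\le\sup_i\mathbb{E}|X^i_N(0)|^{2p}<\infty$, Gronwall yields a bound $\Phi(t)\le K'$ that is independent of $N$. Feeding $\Phi\le K'$ back into the individual inequality and applying Gronwall once more bounds each $\varphi_i(T)$ by a constant $K_m$ independent of $N$ and $i$, which is the claim. The application of Gronwall is legitimate because Theorem~\ref{cbo_thrm_3.2} already guarantees $\varphi_i(t)\le C_N<\infty$ (one may otherwise localise by stopping times and pass to the limit). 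I expect the main obstacle to be precisely the interplay that makes everything uniform in $N$: converting the weighted mean into the empirical average via (\ref{y4.2}) and designing the two-stage Gronwall so that the individual bound is recovered from the averaged one, while keeping the jump term closed under only $\mathbb{E}|Z|^{2p}<\infty$.
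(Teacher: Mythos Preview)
Your proposal is correct and follows the same core strategy as the paper: control $|\bar X_N|^{2p}$ through Jensen's inequality and (\ref{y4.2}), then close by Gronwall, with finiteness of the quantities guaranteed a priori by (\ref{cbo_eqn_3.2}).

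Two minor differences in execution are worth noting. First, the paper applies It\^o's formula to $|X^i_N|^{2p}$ and uses Burkholder--Davis--Gundy only on the Wiener martingale; the jump term is handled by bounding the integrand $|X+c|^{2p}-|X|^{2p}$ from above by a nonnegative expression via $|a+b|^{2p}\le 2^{2p-1}(|a|^{2p}+|b|^{2p})$ and then passing to the compensator. That route also needs only $\mathbb{E}|Z|^{2p}$, so your remark that Kunita's inequality saves a moment hypothesis over ``It\^o~+~BDG'' does not apply to the paper's argument. Second, your two-stage Gronwall is valid but unnecessary: since $\Phi(s)=\frac1N\sum_j\varphi_j(s)\le\sup_j\varphi_j(s)$, taking $\sup_i$ in your inequality $\varphi_i(t)\le C\mathbb{E}|X^i_N(0)|^{2p}+C\int_0^t(\varphi_i(s)+1+\Phi(s))\,ds$ immediately gives a closed inequality for $\sup_i\varphi_i$, and one Gronwall suffices---which is exactly what the paper does.
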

\begin{proof}
Let $p$ be a positive integer. Using Ito's formula, we have
\begin{align*}
    |X_{N}^{i}(t)|^{2p} &= |X^{i}_{N}(0)|^{2p} -2p \mathbb{E}\int_{0}^{t}\beta(s)|X_{N}^{i}(s)|^{2p-2}\big(X_{N}^{i}(s)\cdot(X_{N}^{i}(s) - \bar{X}_{N}(s))\big)ds \\ & \;\;\;\;+
   2 \sqrt{2}p\int_{0}^{t}\sigma(s)|X^{i}_{N}(s)|^{2p-2}\big(X_{N}^{i}(s) \cdot \diag(X_{N}^{i}(s) - \bar{X}_{N}(s))dW^{i}(s)\big) \\ &
    \;\;\;\;+4p(p-1)\int_{0}^{t}\sigma^{2}(s)|X_{N}^{i}(s)|^{2p-4}|\diag(X_{N}^{i}(s)-\bar{X}_{N}(s))X_{N}^{i}(s)|^{2}ds \\ &\;\;\;\;
    +2 p\int_{0}^{t}\sigma^{2}(s)|X_{N}^{i}(s)|^{2p-2}|\diag(X_{N}^{i}(s) - \bar{X}_{N}(s)|^{2}ds \\ & \;\;\;\;
    + \int_{0}^{t}\int_{\mathbb{R}^{d}}\big(|X_{N}^{i}(s^{-}) + \gamma(s)\diag(X_{N}^{i}(s^{-}) - \bar{X}_{N}(s^{-}))z|^{2p} - |X_{N}^{i}(s^{-})|^{2p}\big)\mathcal{N}^{i}(ds,dz).
\end{align*}
First taking supremum over $0\leq t\leq T$ and then taking expectation, we get 
\begin{align}\label{cbo_eq_3.3}
    &\mathbb{E}\sup_{0 \leq t\leq T}|X^{i}_{N}(t)|^{2p} \leq  \mathbb{E}|X^{i}_{N}(0)|^{2p} + C \mathbb{E}\int_{0}^{T}|X_{N}^{i}(s)|^{2p-2}\big|X_{N}^{i}(s)\cdot(X_{N}^{i}(s) - \bar{X}_{N}(s))\big|ds \nonumber \\ & \;\;\;\; +      C\mathbb{E}\sup_{0 \leq t\leq T}\bigg|\int_{0}^{t}|X^{i}_{N}(s)|^{2p-2}\big(X_{N}^{i}(s) \cdot \diag(X_{N}^{i}(s) - \bar{X}_{N}(s))dW^{i}(s)\big)\bigg| 
    \nonumber\\ & \;\;\;\;+
     C\mathbb{E}\int_{0}^{T}|X_{N}^{i}(s)|^{2p-4}|\diag(X_{N}^{i}(s)-\bar{X}_{N}(s))X_{N}^{i}(s)|^{2}ds \nonumber\\ &\;\;\;\;
    + C\mathbb{E}\int_{0}^{T}|X_{N}^{i}(s)|^{2p-2}|\diag(X_{N}^{i}(s) - \bar{X}_{N}(s)|^{2}ds \nonumber \\ &
    \;\;\;\;+ C\mathbb{E}\sup_{0\leq t\leq T}\int_{0}^{t}\int_{\mathbb{R}^{d}}\big(|X_{N}^{i}(s^{-}) + \gamma(s)\diag(X_{N}^{i}(s^{-}) - \bar{X}_{N}(s^{-}))z|^{2p} - |X_{N}^{i}(s^{-})|^{2p}\big)\mathcal{N}^{i}(ds,dz). 
    \end{align}

 To deal with the second term in (\ref{cbo_eq_3.3}), we use Young's inequality and obtain 
    \begin{align*}
        |X_{N}^{i}(s)|^{2p-2}\big|X_{N}^{i}(s)\cdot(X_{N}^{i}(s) - \bar{X}_{N}(s))\big| 
         &\leq |X_{N}^{i}(s)|^{2p} + |X_{N}^{i}(s)|^{2p-1}|\bar{X}_{N}(s)|
         \\ & \leq \frac{4p-1}{2p}|X_{N}^{i}(s)|^{2p} + \frac{1}{2p}|\bar{X}_{N}(s)|^{2p}. 
    \end{align*}
To ascertain a bound on $|\bar{X}_{N}(s)|^{2p}$, we first apply Jensen's inequality  to $ |\bar{X}_{N}(s)|^{2}$ to get
\begin{equation*}
    |\bar{X}_{N}(s)|^{2} = \Bigg|\sum_{i = 1}^{N}X_{N}^{i}(s)\frac{e^{-\alpha f(X_{N}^{i}(s))}}{\sum_{j=1}^{N}e^{-\alpha f(X_{N}^{j}(s))}}\Bigg|^{2} \leq \sum_{i=1}^{N}|X_{N}^{i}(s)|^{2}\frac{e^{-\alpha f(X_{N}^{i}(s))}}{\sum_{j=1}^{N}e^{-\alpha f(X_{N}^{j}(s))}},
\end{equation*}
then using (\ref{y4.2}), we obtain $
    |\bar{X}_{N}(s)|^{2} \leq L_{1} + L_{2}\frac{1}{N}\sum\limits_{i=1}^{N}|X_{N}^{i}(s)|^{2},
$
which on applying the elementary inequality, $ (a + b )^{p} \leq 2^{p-1}(a^{p} + b^{p}), \; a,b \in \mathbb{R}_{+}$ and Jensen's inequality, gives
\begin{align*}
    |\bar{X}_{N}(s)|^{2p} \leq 2^{p-1}\Big(L_{1}^{p} + L_{2}^{p}\frac{1}{N}\sum\limits_{i=1}^{N}|X_{N}^{i}(s)|^{2p}\Big).
\end{align*}
As a consequence of the above calculations, we get
\begin{align}\label{cbo_eq_3.4}
    |X_{N}^{i}(s)|^{2p-2}\big|X_{N}^{i}(s)\cdot(X_{N}^{i}(s) - \bar{X}_{N}(s))\big|    \leq C\Big(1 + |X^{i}_{N}(s)|^{2p} + \frac{1}{N}\sum\limits_{i=1}^{N}|X_{N}^{i}(s)|^{2p}\Big),
\end{align}
where $C$ is a positive constant independent of $N$. 

Using the Burkholder-Davis-Gundy inequality, we get
\begin{align}
    \mathbb{E}&\sup_{0 \leq t\leq T}\bigg|\int_{0}^{t}|X^{i}_{N}(s)|^{2p-2}\big(X_{N}^{i}(s) \cdot \diag(X_{N}^{i}(s) - \bar{X}_{N}(s))dW^{i}(s)\big)\bigg|\nonumber \\ & \leq  \mathbb{E}\bigg(\int_{0}^{T} \big(|X^{i}_{N}(s)|^{2p-2}\big(X_{N}^{i}(s) \cdot \diag(X_{N}^{i}(s) - \bar{X}_{N}(s))\big)\big)^{2}ds\bigg)^{1/2} \nonumber \\ & \leq \mathbb{E}\Bigg(\sup_{0\leq t \leq T } |X_{N}^{i}(t)|^{2p-1}\bigg(\int_{0}^{T}|X_{N}^{i}(s) - \bar{X}_{N}(s))|^{2}ds\bigg)^{1/2}\Bigg),\nonumber 
\end{align}
which on applying generalized Young's inequality ($ab \leq (\epsilon a^{q_{1}})/q_{1} + b^{q_{2}}/(\epsilon^{q_{2}/q_{1}}q_{2}),\; \epsilon, q_{1}, q_{2} >0, 1/q_{1} + 1/q_{2} = 1$)  yields
\begin{align}
    \mathbb{E}&\sup_{0 \leq t\leq T}\bigg|\int_{0}^{t}|X^{i}_{N}(s)|^{2p-2}\big(X_{N}^{i}(s) \cdot \diag(X_{N}^{i}(s) - \bar{X}_{N}(s))dW^{i}(s)\big)\bigg|\nonumber \\ & \leq \frac{1}{2}\mathbb{E}\sup_{0\leq t\leq T}|X^{i}_{N}(t)|^{2p} + C\mathbb{E}\bigg(\int_{0}^{T}|X_{N}^{i}(s) - \bar{X}_{N}(s))|^{2}ds\bigg)^{p}\nonumber \\ &  \leq \frac{1}{2}\mathbb{E}\sup_{0\leq t\leq T}|X^{i}_{N}(t)|^{2p} +  C\mathbb{E}\bigg(\int_{0}^{T}|X_{N}^{i}(s) - \bar{X}_{N}(s))|^{2p}ds\bigg),\label{cbo_eq_3.5}
\end{align}
where in the last step we have utilized Holder's inequality.

Now, we move on to obtain estimates which are required to deal with fourth and fifth term in (\ref{cbo_eq_3.3}). Using Young's inequality, we have
\begin{align}
 A_{1} :=  |X_{N}^{i}(s)|^{2p-4}(|X_{N}^{i}(s)|^{2} &- (X_{N}^{i}(s)\cdot\bar{X}_{N}(s)))^{2} 
    \leq 2|X_{N}^{i}(s)|^{2p} + 2|X_{N}^{i}(s)|^{2p-2}|\bar{X}_{N}(s)|^{2}\nonumber  \\ &
    \leq \frac{4p-2}{p}|X_{N}^{i}(s)|^{2p} + \frac{2}{p}|\bar{X}_{N}(s)|^{2p}.
\end{align}
In the same way, applying Young's inequality, we obtain
\begin{align}
A_{2} :=     |X_{N}^{i}(s)|^{2p-2}|\diag(X_{N}^{i}(s) &- \bar{X}_{N}(s))|^{2}  \leq 2|X_{N}^{i}(s)|^{2p} + 2|X_{N}^{i}(s)|^{2p-2}|\bar{X}_{N}(s)|^{2} \nonumber \\ & \leq \frac{4p-2}{p}|X_{N}^{i}(s)|^{2p} + \frac{2}{p}|\bar{X}_{N}(s)|^{2p}.
\end{align}
Following the same procedure based on (\ref{y4.2}), which we followed to obtain  bound (\ref{cbo_eq_3.4}), we also get
\begin{align}\label{cbo_eq_3.8}
A_{1} + A_{2} \leq C\Big(1 + |X_{N}^{i}(s)|^{2p} + \frac{1}{N}\sum\limits_{i=1}^{N}|X_{N}^{i}(s)|^{2p} \Big),   
\end{align}
where $C$ is a positive constant independent of $N$.

It is left to deal with the last term in (\ref{cbo_eq_3.3}). Using the Cauchy-Bunyakowsky-Schwartz inequality, we get
\begin{align*}
   &\mathbb{E}\sup_{0\leq t\leq T}\int_{0}^{t}\int_{\mathbb{R}^{d}}\big(|X_{N}^{i}(s^{-}) + \gamma(s)\diag(X_{N}^{i}(s^{-}) - \bar{X}_{N}(s^{-}))z|^{2p} - |X_{N}^{i}(s^{-})|^{2p}\big)\mathcal{N}^{i}(ds,dz)
 \\ & \leq \mathbb{E}\sup_{0\leq t\leq T}\int_{0}^{t}\int_{\mathbb{R}^{d}}\bigg(2^{2p-1}\big(|X_{N}^{i}(s^{-})|^{2p} + |\gamma(s)\diag(X_{N}^{i}(s^{-}) - \bar{X}_{N}(s^{-}))z|^{2p}\big) - |X_{N}^{i}(s^{-})|^{2p}\bigg)\mathcal{N}^{i}(ds,dz)
\\ &  \leq C\mathbb{E}\int_{0}^{T}\int_{\mathbb{R}^{d}}\big(|X_{N}^{i}(s^{-})|^{2p} + |\gamma(s)\diag(X_{N}^{i}(s^{-}) - \bar{X}_{N}(s^{-}))z|^{2p}\big) \mathcal{N}^{i}(ds,dz)
\\  & \leq C\mathbb{E}\int_{0}^{T}\int_{\mathbb{R}^{d}}(|X_{N}^{i}(s)|^{2p} + |\gamma(s)\diag(X_{N}^{i}(s)-\bar{X}_{N}(s))z|^{2p}\big)\rho_{z}(z)dz 
   \\ &
   \leq C\mathbb{E}\int_{0}^{T}\Big(|X_{N}^{i}(s)|^{2p}  + |X_{N}^{i}(s) - \bar{X}_{N}(s)|^{2p}\int_{\mathbb{R}^{d}}|z|^{2p}\rho_{z}(z)dz\Big)ds.
\end{align*}
We have
\begin{align*}
    |X_{N}^{i}(s) - \bar{X}_{N}(s)|^{2p} &\leq 2^{2p-1}\big(|X_{N}^{i}(s)|^{2p} + |\bar{X}_{N}^{i}(s)|^{2p}\big) \leq  C\Big( 1 + |X_{N}^{i}(s)|^{2p} + \frac{1}{N}\sum\limits_{i=1}^{N}|X_{N}^{i}(s)|^{2p}\Big),
\end{align*}
and hence
\begin{align}
   &\mathbb{E}\sup_{0\leq t\leq T}\int_{0}^{t}\int_{\mathbb{R}^{d}}\big(|X_{N}^{i}(s^{-}) + \gamma(s)\diag(X_{N}^{i}(s^{-}) - \bar{X}_{N}(s^{-}))z|^{2p} - |X_{N}^{i}(s^{-})|^{2p}\big)\mathcal{N}^{i}(ds,dz) \nonumber \\ & \leq C\mathbb{E}\int_{0}^{T}\Big(1+ |X_{N}^{i}(s)|^{2p} + \frac{1}{N}\sum\limits_{i=1}^{N}|X_{N}^{i}(s)|^{2p}\Big)ds,\label{cbo_eq_3.9}
\end{align}
where $C >0$ does not depend on $N$.

Using (\ref{cbo_eq_3.4}), (\ref{cbo_eq_3.5}), (\ref{cbo_eq_3.8}) and (\ref{cbo_eq_3.9}) in (\ref{cbo_eq_3.3}), we get
\begin{align*}
    \frac{1}{2}\mathbb{E}\sup_{0\leq t\leq T}|X_{N}^{i}(t)|^{2p} &\leq \mathbb{E}|X_{N}^{i}(0)|^{2p} +  C\mathbb{E}\int_{0}^{T}\Big(1 +  |X_{N}^{i}(s)|^{2p} + \frac{1}{N}\sum\limits_{i=1}^{N}|X_{N}^{i}(s)|^{2p}\Big)ds 
\end{align*}
and
\begin{align*}
     \mathbb{E}\sup_{0\leq t\leq T}|X_{N}^{i}(t)|^{2p} &\leq 2\mathbb{E}|X_{N}^{i}(0)|^{2p} +  C\mathbb{E}\int_{0}^{T}\Big(1 +  \sup_{0\leq u\leq s}|X_{N}^{i}(u)|^{2p} + \frac{1}{N}\sum\limits_{i=1}^{N}\sup_{0\leq u\leq s}|X_{N}^{i}(u)|^{2p}\Big)ds.
    \end{align*}
Taking supremum over $\{1,\dots,N\}$, we obtain  
    \begin{align*} 
     \sup_{i=1,\dots,N}\mathbb{E}\sup_{0\leq t\leq T}|X_{N}^{i}(t)|^{2p} &\leq  2\sup_{i=\{1,\dots,N\}}\mathbb{E}|X_{N}^{i}(0)|^{2p} + C \bigg(1 + \int_{0}^{T}\sup_{i = 1,\dots,N}\mathbb{E}\sup_{0\leq u \leq s}|X_{N}^{i}(u)|^{2p} ds\bigg),
\end{align*}
which gives our targeted result for positive integer valued $p$ by applying Gr\"{o}nwall's lemma (note that we can apply Gr\"{o}nwall's lemma due to (\ref{cbo_eqn_3.2})). We can extend the result to non-integer values of $p \geq 1$ using Holder's inequality. 
\end{proof}

\subsection{Well-posedness of mean-field jump-diffusion SDEs}
\label{sec_well_pos_2}
In this section, we first introduce Wasserstein metric and state Lemma~\ref{cboblw} which is crucial for establishing well-posedness of the mean-field limit. Then, we  prove existence and uniqueness of the McKean-Vlasov jump-diffusion SDEs (\ref{cbomfsde}) in Theorem~\ref{mf_wel_pos_th}.  

Let $\mathbb{D}([0,T];\mathbb{R}^{d})$ be the space of $\mathbb{R}^{d}$ valued c\'{a}dl\'{a}g functions and $\mathcal{P}_{p}(\mathbb{R}^{d}),\; p\geq 1$, be the space of probability measures on the measurable space $(\mathbb{R}^{d},\mathcal{B}(\mathbb{R}^{d}))$ such that for any $\mu \in \mathcal{P}_{p}(\mathbb{R}^{d})$,  $\int_{\mathbb{R}^{d}}|x|^{p}\mu(dx)<  \infty$, and which is equipped with the  $p$-Wasserstein metric 
\begin{equation*}
    \mathcal{W}_{p}(\mu,\vartheta) := \inf_{\pi \in \prod(\mu,\vartheta)}\Big( \int_{\mathbb{R}^{d}\times \mathbb{R}^{d}}|x-y|^{p}\pi(dx,dy)\Big)^{\frac{1}{p}},
\end{equation*}
where $\prod(\mu,\vartheta)$ is the set of couplings of $\mu,\vartheta \in \mathcal{P}_{p}(\mathbb{R}^{d})$ \cite{cbo33}.

Let $\mu \in \mathcal{P}_{2}(\mathbb{R}^{d})$ with $\int_{\mathbb{R}^{d}}|x|^{2}\mu(dx) \leq K$. Then, using Jensen's inequality, we have
\begin{align*}
    e^{-\alpha \int_{\mathbb{R}^{d}}f(x)\mu(dx) } \leq \int_{\mathbb{R}^{d}}e^{-\alpha f(x)}\mu(dx), 
\end{align*}
and the simple rearrangement together with  Assumption~\ref{cboassu3.4}, gives
\begin{align}\label{cbol3.4}
    \frac{e^{-\alpha f_{m}}}{\int_{\mathbb{R}^{d}}e^{-\alpha f(x)}\mu(dx)} \leq e^{\alpha(\int_{\mathbb{R}^{d}}f(x)\mu(dx) - f_{m})} \leq e^{\alpha K_{u}\int_{\mathbb{R}^{d}}(1 + |x|^{2})\mu(dx)} \leq  C_{K},
\end{align}
where $C_{K} > 0$ is a constant.  We will also need the following notation:
\begin{align*}
    \bar{X}^{\mu} = \frac{\int_{\mathbb{R}^{d}} xe^{-\alpha f(x)}\mu(dx)}{\int_{\mathbb{R}^{d}}e^{-\alpha f(x)}\mu(dx)},
\end{align*}
where $\mu \in \mathcal{P}_{4}(\mathbb{R}^{d})$.

The next lemma is required for proving well-posedness of the McKean-Vlasov SDEs (\ref{cbomfsdep}). Its proof is available in \cite[Lemma 3.2]{cbo2}. 
\begin{lemma}\label{cboblw}
Let Assumptions~\ref{cboh3.1}, \ref{cboh3.2}-\ref{cboasm1.4} hold and  there exists a constant $K>0$ such that  $\int |x|^{4}\mu(dx) \leq K$ and $\int |y|^{4} \vartheta(dy) \leq K$ for all $\mu,\vartheta \in \mathcal{P}_{4}(\mathbb{R}^{d})$, then the following inequality is satisfied: 
\begin{equation*}
    |\bar{X}^{\mu} - \bar{X}^{\vartheta}| \leq C\mathcal{W}_{2}(\mu,\vartheta),
\end{equation*}
where $C>0$ is independent of $\mu$ and $\vartheta$.
\end{lemma}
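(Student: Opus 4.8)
The plan is to establish the bound through an optimal coupling together with a careful separation of polynomial growth from the Wasserstein distance. Write $w(x)=e^{-\alpha f(x)}$, and set $a_{\mu}=\int_{\mathbb{R}^{d}}x\,w(x)\,\mu(dx)$ and $b_{\mu}=\int_{\mathbb{R}^{d}}w(x)\,\mu(dx)$ (and analogously $a_{\vartheta},b_{\vartheta}$), so that $\bar{X}^{\mu}=a_{\mu}/b_{\mu}$. The first step is the algebraic decomposition
$$\bar{X}^{\mu} - \bar{X}^{\vartheta} = \frac{a_{\mu} - a_{\vartheta}}{b_{\mu}} + \frac{a_{\vartheta}\,(b_{\vartheta} - b_{\mu})}{b_{\mu} b_{\vartheta}},$$
which isolates a numerator difference and a denominator difference with scalar prefactors. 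I would control those prefactors first: by (\ref{cbol3.4}) (which uses Assumption~\ref{cboassu3.4} and the uniform bound $\int|x|^{4}\mu(dx)\le K$), both $1/b_{\mu}$ and $1/b_{\vartheta}$ are bounded above by a constant depending only on $K$; and since $f\ge f_{m}$ gives $w\le e^{-\alpha f_{m}}$, the quantity $|a_{\vartheta}|\le e^{-\alpha f_{m}}\int_{\mathbb{R}^{d}}|y|\,\vartheta(dy)$ is controlled by the second moment of $\vartheta$, hence bounded uniformly in terms of $K$. It therefore remains to show that $|a_{\mu}-a_{\vartheta}|$ and $|b_{\mu}-b_{\vartheta}|$ are each $O(\mathcal{W}_{2}(\mu,\vartheta))$.

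Next I would introduce an optimal coupling $\pi\in\prod(\mu,\vartheta)$ realising $\mathcal{W}_{2}(\mu,\vartheta)$, so that $a_{\mu}-a_{\vartheta}=\int(xw(x)-yw(y))\,\pi(dx,dy)$ and $b_{\mu}-b_{\vartheta}=\int(w(x)-w(y))\,\pi(dx,dy)$. The key pointwise estimate is on $w$: since $w=e^{-\alpha f}$ and $f\ge f_{m}$, the mean value theorem gives $|w(x)-w(y)|\le\alpha e^{-\alpha f_{m}}|f(x)-f(y)|$, and Assumption~\ref{cboh3.2} then yields $|w(x)-w(y)|\le\alpha e^{-\alpha f_{m}}K_{f}(1+|x|+|y|)|x-y|$. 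For the numerator I would split $xw(x)-yw(y)=x(w(x)-w(y))+(x-y)w(y)$, which gives
$$|xw(x)-yw(y)|\le\alpha e^{-\alpha f_{m}}K_{f}\,|x|\,(1+|x|+|y|)|x-y|+e^{-\alpha f_{m}}|x-y|.$$

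The final step is to integrate these pointwise bounds against $\pi$ and apply the Cauchy--Bunyakowsky--Schwarz inequality, peeling off a factor $\big(\int|x-y|^{2}\,\pi\big)^{1/2}=\mathcal{W}_{2}(\mu,\vartheta)$. For the numerator this leaves the factor $\big(\int|x|^{2}(1+|x|+|y|)^{2}\,\pi\big)^{1/2}$, a fourth-degree polynomial moment that is bounded by a constant depending only on $K$ precisely because of the hypothesis $\int|x|^{4}\mu(dx),\int|y|^{4}\vartheta(dy)\le K$; for the denominator the analogous factor involves only second moments. Combining with the uniform bounds on $1/b_{\mu}$, $1/(b_{\mu}b_{\vartheta})$ and $|a_{\vartheta}|$ from the first step then yields $|\bar{X}^{\mu}-\bar{X}^{\vartheta}|\le C\,\mathcal{W}_{2}(\mu,\vartheta)$ with $C=C(\alpha,K_{f},K_{u},f_{m},K)$ independent of $\mu,\vartheta$.

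The main obstacle is exactly the numerator term: the product $x\,w(x)$ carries an extra factor of $|x|$ relative to the weight, so after using the quadratic-growth Lipschitz bound of Assumption~\ref{cboh3.2} the integrand grows like $|x|^{2}$ times $|x-y|$. Extracting $\mathcal{W}_{2}$ through Cauchy--Schwarz then unavoidably produces a fourth-degree moment, which is why the uniform \emph{fourth}-moment hypothesis (rather than merely a second-moment one) is indispensable; without it the accompanying prefactor could not be bounded uniformly in $\mu,\vartheta$. By contrast, the denominator difference and the prefactor $a_{\vartheta}$ are comparatively harmless, needing only the already-established lower bound (\ref{cbol3.4}) on $b_{\mu}$ and control of second moments.
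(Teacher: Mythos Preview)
The paper does not actually supply its own proof of this lemma; it simply cites \cite[Lemma~3.2]{cbo2}. Your argument is correct and is essentially the standard one that reference contains: the same algebraic decomposition of $\bar{X}^{\mu}-\bar{X}^{\vartheta}$ into a numerator part and a denominator part, the same Lipschitz estimate $|e^{-\alpha f(x)}-e^{-\alpha f(y)}|\le \alpha e^{-\alpha f_m}K_f(1+|x|+|y|)|x-y|$ via Assumption~\ref{cboh3.2}, the same use of (\ref{cbol3.4}) to bound the inverse normalisers uniformly, and the same Cauchy--Schwarz step against an optimal $\mathcal{W}_2$ coupling that forces the fourth-moment hypothesis.

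One small remark: your write-up makes transparent that Assumption~\ref{cboasm1.4} is not actually invoked anywhere---only Assumptions~\ref{cboh3.1}, \ref{cboh3.2}, \ref{cboassu3.4} and the uniform fourth-moment bound are used. That is fine (the lemma is stated under hypotheses that may be slightly stronger than needed), but it is worth being aware of.
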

\begin{theorem}\label{mf_wel_pos_th}
Let Assumptions~\ref{cboh3.1}, \ref{cboh3.2}-\ref{cboasm1.4} hold, and let $\mathbb{E}|X(0)|^{4} < \infty $  and $\int_{\mathbb{R}^{d}}|z|^{4}\rho_{z}(z)dz < \infty$. Then, there exists a unique nonlinear process $X \in \mathbb{D}([0,T];\mathbb{R}^{d})$, $T>0$ which satisfies the McKean-Vlasov SDEs (\ref{cbomfsdep}) in the strong sense.
\end{theorem}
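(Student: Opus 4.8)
The plan is to construct the solution by a Picard--Banach fixed-point argument built on the observation that the only genuinely nonlinear (measure-dependent) object in (\ref{cbomfsdep}) is the weighted average $\bar{X}(t) = \bar{X}^{\mathcal{L}_{X(t)}}$, which is a \emph{deterministic} function of time. Accordingly, I would freeze it: let $\mathcal{U}_{M}$ be the closed ball of continuous functions $u \colon [0,T] \to \mathbb{R}^{d}$ with $\sup_{t \in [0,T]}|u(t)| \le M$, equipped with the supremum norm, and for $u \in \mathcal{U}_{M}$ consider the decoupled jump-diffusion SDE
\begin{align*}
    dX^{u}(t) &= -\beta(t)(X^{u}(t) - u(t))\,dt + \sqrt{2}\sigma(t)\diag(X^{u}(t) - u(t))\,dW(t) \\
    &\quad + \gamma(t)\int_{\mathbb{R}^{d}}\diag(X^{u}(t^{-}) - u(t^{-}))z\,\mathcal{N}(dt,dz), \qquad X^{u}(0) = X(0).
\end{align*}
Since $x \mapsto x - u(t)$ is affine, the coefficients of this equation are \emph{globally} Lipschitz in $x$ with time-continuous, hence bounded, coefficients on $[0,T]$; thus \cite[Theorem 1]{cbo19} (as already invoked in Theorem~\ref{cbo_thrm_3.2}) yields a unique strong solution $X^{u}$. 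I would then define the map $\Phi(u)(t) := \bar{X}^{\mathcal{L}_{X^{u}(t)}} = \mathbb{E}(X^{u}(t)e^{-\alpha f(X^{u}(t))})/\mathbb{E}(e^{-\alpha f(X^{u}(t))})$, whose fixed point is precisely a strong solution of (\ref{cbomfsdep}).

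First I would establish the a priori moment bound needed to make the scheme run: mimicking the Itô/Gr\"{o}nwall computation of Lemma~\ref{cbolemma3.3} (with $u(t)$ playing the role of $\bar{X}_{N}(t)$ and using $\mathbb{E}|X(0)|^{4} < \infty$, $\int_{\mathbb{R}^{d}}|z|^{4}\rho_{z}(z)\,dz < \infty$), I expect $\mathbb{E}\sup_{t \le T}|X^{u}(t)|^{4} \le K_{4}$ with $K_{4}$ depending on $M$ and $T$ but not otherwise on $u$. Combined with the bound (\ref{cbol3.4}), this forces $|\Phi(u)(t)| = |\bar{X}^{\mathcal{L}_{X^{u}(t)}}|$ to be bounded by a constant depending only on $K_{4}$ and $\alpha$, so that $\Phi$ maps $\mathcal{U}_{M}$ into itself for $M$ chosen large enough, and, crucially, the uniform fourth-moment control legitimizes the use of Lemma~\ref{cboblw} below.

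Next I would prove the contraction estimate. For $u,v \in \mathcal{U}_{M}$ the difference $X^{u} - X^{v}$ solves an SDE whose drift, diffusion and jump coefficients are controlled by $|(X^{u}-u) - (X^{v}-v)| \le |X^{u}-X^{v}| + |u-v|$; applying Itô's formula to $|X^{u}-X^{v}|^{2}$, the Burkholder--Davis--Gundy inequality for the martingale and compensated-jump parts, and Gr\"{o}nwall's lemma (exactly as in Lemma~\ref{cbolemma3.3}) should give
\begin{equation*}
    \sup_{s \le t}\mathbb{E}|X^{u}(s) - X^{v}(s)|^{2} \le C\int_{0}^{t}\|u-v\|_{\infty,[0,s]}^{2}\,ds.
\end{equation*}
Since $\mathcal{W}_{2}(\mathcal{L}_{X^{u}(t)},\mathcal{L}_{X^{v}(t)})^{2} \le \mathbb{E}|X^{u}(t)-X^{v}(t)|^{2}$ (use the common driving noise and initial datum as a coupling), Lemma~\ref{cboblw} then yields $|\Phi(u)(t)-\Phi(v)(t)|^{2} \le C\,\mathbb{E}|X^{u}(t)-X^{v}(t)|^{2}$, and iterating the two displayed inequalities produces a bound of the form $\sup_{s\le t}|\Phi^{n+1}(u)(s)-\Phi^{n}(u)(s)|^{2} \le (Ct)^{n}/n! \cdot \mathrm{const}$. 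The factorial decay makes some iterate of $\Phi$ a contraction on the whole of $[0,T]$, giving a unique fixed point $u^{*}$; equivalently one proves contraction on a short interval $[0,T_{0}]$ and concatenates, the moment bound guaranteeing the procedure does not degenerate. Setting $\bar{X} = u^{*}$ and $X = X^{u^{*}}$ delivers the unique strong solution, and uniqueness in the full (non-frozen) sense follows because any two solutions induce deterministic averages that must both be fixed points of $\Phi$.

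The hard part will be the a priori moment propagation and, relatedly, ensuring the estimates are strong enough to invoke Lemma~\ref{cboblw}: the weighted average $\bar{X}^{\mu}$ is only Lipschitz in $\mathcal{W}_{2}$ \emph{under a uniform fourth-moment bound}, so the whole argument hinges on propagating fourth moments uniformly in $u \in \mathcal{U}_{M}$. The jump term requires the same care as in Lemma~\ref{cbolemma3.3} --- expanding $|X^{u}(s^{-}) + \gamma(s)\diag(X^{u}(s^{-})-u(s^{-}))z|^{4} - |X^{u}(s^{-})|^{4}$ under the Poisson random measure and using $\int_{\mathbb{R}^{d}}|z|^{4}\rho_{z}(z)\,dz < \infty$ --- but this is by now routine given the template already established for the particle system.
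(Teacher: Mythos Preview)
Your approach is correct and runs along the same decoupling idea as the paper --- freeze the deterministic weighted average, solve a linear-in-$x$ jump-diffusion, and recover the nonlinear process as a fixed point of $u \mapsto \bar{X}^{\mathcal{L}_{X^{u}}}$ --- but the fixed-point machinery differs genuinely. The paper does \emph{not} run a Picard/Banach contraction. Instead it shows that the map $\mathbb{T}(v) = \bar{X}_{v}$ is compact on $C([0,T];\mathbb{R}^{d})$ (via the H\"older estimate $|\bar{X}_{v}(t+\delta)-\bar{X}_{v}(t)| \le C\delta^{1/2}$ and the compact embedding $C^{0,1/2} \hookrightarrow C$), proves the Leray--Schauder a priori bound by showing that the set $\{v : v = \epsilon\,\mathbb{T}v,\ 0\le\epsilon\le 1\}$ is bounded (substituting $|v|^{2} \le \epsilon^{2}(L_{1}+L_{2}\mathbb{E}|X_{v}|^{2})$ back into the Gr\"onwall estimate for $\mathbb{E}|X_{v}|^{2}$), and then obtains existence from the Leray--Schauder theorem; uniqueness is argued separately, using Lemma~\ref{cboblw} exactly as you do.

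What each buys: the Leray--Schauder route sidesteps the self-mapping issue entirely --- you never need $\Phi(\mathcal{U}_{M}) \subset \mathcal{U}_{M}$, only the a priori bound on would-be fixed points, and this gives existence on the full interval $[0,T]$ in one stroke. Your contraction route is more elementary and delivers existence and uniqueness simultaneously, but the self-map condition $L_{1} + L_{2}\mathbb{E}|X^{u}(t)|^{2} \le M^{2}$ (with $\mathbb{E}|X^{u}(t)|^{2}$ growing like $M^{2}t$) forces you to work on short intervals and concatenate; you flag this, and it works because the fixed point itself satisfies a moment bound independent of $M$, so the initial data at each restart stay controlled. Your identification of the uniform fourth-moment bound as the crux (to legitimize Lemma~\ref{cboblw}) is exactly right and is the same ingredient the paper leans on.
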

\begin{proof}
Let $v \in C([0,T];\mathbb{R}^{d})$. Consider the following SDEs:
\begin{align}
       dX_{v}(t) &= -\beta(t)(X_{v}(t) - v(t))dt  + \sigma(t)\diag(X_{v}(t) - v(t))dW(t) \nonumber \\ & \;\;\;\;+ \gamma(t)\int_{\mathbb{R}^{d}}\diag(X_{v}(t^{-}) - v(t)))z\mathcal{N}(dt,dz) \label{cbo_neweq_3.14}
\end{align}
for any $t \in[0,T]$.

Note that $v(t)$ is a deterministic function of $t$, therefore the coefficients of SDEs (\ref{cbo_neweq_3.14}) only depend on $x$ and $t$. The coefficients are globally Lipschitz continuous and have linear growth in $x$. The existence and uniqueness of a process $X_{v} \in \mathbb{D}([0,T];\mathbb{R}^{d})$ satisfying SDEs  with  L\'{e}vy noise (\ref{cbo_neweq_3.14}) follows from \cite[pp. 311-312]{cbos11}.  We also have $\int_{\mathbb{R}^{d}}|x|^{4}\mathcal{L}_{X_{v}(t)}(dx) = \mathbb{E}|X_{v}(t)|^{4} \leq \sup_{t\in[0,T]}\mathbb{E}|X_{v}(t)|^{4} \leq K$, where $K$ is a positive constant depending on $v$ and $T$, and  $\mathcal{L}_{X_{v}(t)}$ represents the law of $X_{v}(t)$. 

We define a mapping 
\begin{align}
\mathbb{T} : C([0,T];\mathbb{R}^{d}) \rightarrow C([0,T];\mathbb{R}^{d}),\;\;\mathbb{T}(v)  = \bar{X}_{v},
\end{align}
where
\begin{align*}
\mathbb{T}v(t) & = \bar{X}_{v}(t) = \mathbb{E}(X_{v}(t)e^{-\alpha f(X_{v}(t))})\Big/\mathbb{E}(e^{-\alpha f(X_{v}(t))}) \\ & =  \int_{\mathbb{R}^{d}}xe^{-\alpha f(x)}\mathcal{L}_{X_{v}(t)}(dx) \bigg/\int_{\mathbb{R}^{d}}e^{-\alpha f(x)}\mathcal{L}_{X_{v}(t)}(dx)= \bar{X}^{\mathcal{L}_{X_{v}(t)}}(t).
\end{align*}

Let $\delta \in (0,1)$. For all $t, t+\delta \in (0,T)$, Ito's isometry provides
\begin{align}
    \mathbb{E}|X_{v}(t + \delta) - X_{v}(t)|^{2} &\leq C\int_{t}^{t+\delta}\mathbb{E}|X_{v}(s) - v(s)|^{2}ds   \nonumber  \\ & \;\;\;\;+ \int_{t}^{t+\delta}\int_{\mathbb{R}^{d}}\mathbb{E}|X_{v}(s) - v(s)|^{2}|z|^{2}\rho(z)dzds \leq C \delta, \label{cbo_neweq_3.17}
\end{align}
where $C$ is a positive constant independent of $\delta$. Using Lemma~\ref{cboblw} and (\ref{cbo_neweq_3.17}), we obtain
\begin{align*}
    |\bar{X}_{v}(t+\delta ) - \bar{X}_{v}(t)| &= |\bar{X}^{\mathcal{L}_{X_{v}(t+\delta)}}(t+\delta) - \bar{X}^{\mathcal{L}_{X_{v}(t)}}(t)| \leq C\mathcal{W}_{2}(\mathcal{L}_{X_{v}(t+\delta)}, \mathcal{L}_{X_{v}(t)}) \\ & \leq C\big(\mathbb{E}|X_{v}(t+\delta) - X_{v}(t)|^{2}\big)^{1/2} \leq C|\delta|^{1/2},
\end{align*}
 where $C$ is a positive constant independent $\delta$. This implies the H\"{o}lder continuity of the map $t \rightarrow \bar{X}_{v}(t)$. Therefore, the compactness of $\mathbb{T}$ follows from the compact embedding $C^{0,\frac{1}{2}}([0,T];\mathbb{R}^{d}) \hookrightarrow C([0,T];\mathbb{R}^{d}) $.
 
 Using Ito's isometry, we have
\begin{align}
    \mathbb{E}|X_{v}(t)|^{2} &\leq  4\bigg(\mathbb{E}|X_{v}(0)|^{2} + \mathbb{E}\bigg|\int_{0}^{t}\beta(s)(X_{v}(s) - v(s))ds\bigg|^{2} +  \mathbb{E}\bigg|\int_{0}^{t}\sigma(s)\diag(X_{v}(s) - v(s))dW(s)\bigg|^{2} \nonumber \\ & \;\;\;\; + \mathbb{E}\bigg|\int_{0}^{t}\gamma(s)\diag(X_{v}(s^-) - v(s))z\mathcal{N}(ds,dz)\bigg|^{2}\bigg) \nonumber \\ & \leq  C\bigg(1 + \int_{0}^{t}\mathbb{E}|X_{v}(s) - v(s)|^{2}ds\bigg) \leq C\bigg(1+ \int_{0}^{t}(\mathbb{E}|X_{v}(s)|^{2} + |v(s)|^{2}) ds\bigg), \label{cbo_eq_3.17}
\end{align}
where $C$ is a positive constant independent of $v$. 
Moreover, we have the following result under Assumptions~\ref{cboh3.1}, \ref{cboh3.2}-\ref{cboasm1.4} \cite[Lemma 3.3]{cbo2}:
\begin{align}
    |\bar{X}_{v}(t)|^{2} \leq L_{1} + L_{2}\mathbb{E}|X_{v}(t)|^{2}, \label{cbo_neweq_3.18}
\end{align}
where $L_{1}$ and $L_{2}$ are from (\ref{y4.2}). 
Consider a set $\mathcal{S} = \{ v\in C([0,T];\mathbb{R}^{d}) : v = \epsilon \mathbb{T}v, \; 0\leq \epsilon \leq 1\} $. The set $\mathcal{S}$ is non-empty due to the fact that $\mathbb{T}$ is compact (see the remark after Theorem~10.3 in \cite{104}).  Therefore, for any $v \in \mathcal{S}$, we have the corresponding unique process $X_{v}(t) \in \mathbb{D}([0,T];\mathbb{R}^{d})$ satisfying (\ref{cbo_neweq_3.14}), and  $\mathcal{L}_{X_{v}(t)}$ represents the law of $X_{v}(t)$, such that the following holds due to (\ref{cbo_neweq_3.18}):
\begin{align}
    |v(s)|^{2} = \epsilon^{2} |\mathbb{T}v(s)|^{2} = \epsilon^{2} |\bar{X}_{v}(s)|^{2} \leq  \epsilon^{2}  \big(L_{1} + L_{2}\mathbb{E}|X(s)|^{2}) \label{cbo_neweq_3.19}
\end{align}
for all $s \in [0,T]$.  Substituting (\ref{cbo_neweq_3.19}) in (\ref{cbo_eq_3.17}), we get
\begin{align*}
    \mathbb{E}|X_{v}(t)|^{2} \leq C\bigg(1+\int_{0}^{t}\mathbb{E}|X_{v}(s)|^{2}ds\bigg),
\end{align*}
which on applying Gr\"{o}nwall's lemma gives
\begin{align}
    \mathbb{E}|X_{v}(t)|^{2} \leq C, \label{cbo_neweq_3.20}
\end{align}
where $C$ is independent of $v$. Due to (\ref{cbo_neweq_3.19}) and (\ref{cbo_neweq_3.20}), we can claim the boundedness of the set $\mathcal{S}$. Therefore, from the Leray-Schauder theorem \cite[Theorem~10.3]{104} there exists a fixed point of the mapping $\mathbb{T}$. This proves existence of the solution of (\ref{cbomfsdep}).

Let $v_{1}$ and $v_{2}$ be two fixed points of the mapping $\mathbb{T}$ and let us denote the corresponding solutions of (\ref{cbo_neweq_3.14}) as $X_{v_{1}}$ and $X_{v_{2}}$. Using Ito's isometry, we can get
\begin{align}
    \mathbb{E}|X_{v_{1}}(t) - X_{v_{2}}(t)|^{2} \leq \mathbb{E}|X_{v_{1}}(0) - X_{v_{2}}(0)|^{2} + C\int_{0}^{t}\big(\mathbb{E}|X_{v_{1}}(s) -X_{v_{2}}(s)|^{2} + |v_{1}(s) - v_{2}(s)|^{2}\big)ds.  \label{cbo_neweq_3.21}
\end{align}
Note that $\mathcal{S}$ is a bounded set and by definiiton $v_{1}$ and $v_{2}$ belong to $\mathcal{S}$. Then, we can apply Lemma~\ref{cboblw} to ascertain 
\begin{align*}
    |v_{1}(s) - v_{2}(s)|^{2} = |\bar{X}_{v_{1}}(s) - \bar{X}_{v_{2}}(s)|^{2} \leq C\mathcal{W}_{2}(\mathcal{L}_{X_{v_{1}}(s)} , \mathcal{L}_{X_{v_{2}}(s)}) \leq C \mathbb{E}|X_{v_{1}}(s) - X_{v_{2}}(s)|^{2}.
\end{align*}
Using the above estimate, Gr\"{o}nwall's lemma and the fact $X_{v_{1}}(0) = X_{v_{2}}(0)$ in (\ref{cbo_neweq_3.21}), we get uniqueness of the solution of (\ref{cbomfsdep}).
\end{proof}
\begin{theorem}\label{cbolem3.6}
Let Assumptions~\ref{cboh3.1}, \ref{cboh3.2}-\ref{cboasm1.4} are satisfied. Let $p\geq 1$, $\mathbb{E}|X(0)|^{2p} < \infty $ and $\mathbb{E}|Z|^{2p}< \infty$, then the following holds:
\begin{align*}
   \mathbb{E} \sup_{0\leq t \leq T}|X(t)|^{2p} \leq K_{p},
\end{align*}
where $X(t)$ satisfies (\ref{cbomfsdep}) and $K_{p}$ is a positive constant.
\end{theorem}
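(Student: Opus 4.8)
The plan is to reproduce, for the single mean-field equation (\ref{cbomfsdep}), the strategy used for the uniform-in-$N$ bound of Lemma~\ref{cbolemma3.3}. First I would fix a positive integer $p$ and apply It\^o's formula for jump-diffusions to $|X(t)|^{2p}$. This produces a drift term with integrand $-2p\beta(s)|X(s)|^{2p-2}\big(X(s)\cdot(X(s)-\bar{X}(s))\big)$, a Wiener stochastic integral, the two second-order It\^o-correction terms carrying $|X(s)|^{2p-4}|\diag(X(s)-\bar{X}(s))X(s)|^{2}$ and $|X(s)|^{2p-2}|\diag(X(s)-\bar{X}(s))|^{2}$, and the Poisson integral $\int_{0}^{t}\int_{\mathbb{R}^{d}}\big(|X(s^{-})+\gamma(s)\diag(X(s^{-})-\bar{X}(s^{-}))z|^{2p}-|X(s^{-})|^{2p}\big)\mathcal{N}(ds,dz)$. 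I would then take $\sup_{0\le t\le T}$ followed by expectation, and estimate each term exactly as in (\ref{cbo_eq_3.4})--(\ref{cbo_eq_3.9}).

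The simplification relative to the particle case is that here $\bar{X}(t)$ is \emph{deterministic} given the law $\mathcal{L}_{X(t)}$. Using Jensen's inequality together with the mean-field form of (\ref{y4.2}), namely $|\bar{X}(t)|^{2}\le L_{1}+L_{2}\mathbb{E}|X(t)|^{2}$ as in (\ref{cbo_neweq_3.18}), and then raising to the $p$-th power with $(\mathbb{E}|X(t)|^{2})^{p}\le\mathbb{E}|X(t)|^{2p}$, I obtain $|\bar{X}(t)|^{2p}\le C(1+\mathbb{E}|X(t)|^{2p})$. Feeding this into the Young-inequality bounds for the drift and second-order terms majorises each by $C(1+|X(s)|^{2p}+\mathbb{E}|X(s)|^{2p})$. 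The Wiener integral is controlled by the Burkholder--Davis--Gundy inequality followed by generalised Young and H\"older inequalities, yielding $\tfrac{1}{2}\mathbb{E}\sup_{0\le t\le T}|X(t)|^{2p}$ plus an integral remainder, precisely as in (\ref{cbo_eq_3.5}). For the Poisson integral, the elementary bound $|a+b|^{2p}\le 2^{2p-1}(|a|^{2p}+|b|^{2p})$ renders the integrand nonnegative, so the supremum over $t$ is attained at $t=T$; replacing $\mathcal{N}(ds,dz)$ by its intensity measure $\nu(dz)\,ds=\lambda\rho_{z}(z)\,dz\,ds$ and using $\mathbb{E}|Z|^{2p}<\infty$ gives the same type of majorant. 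Absorbing the $\tfrac{1}{2}$-term into the left-hand side leads to
\begin{align*}
\mathbb{E}\sup_{0\le t\le T}|X(t)|^{2p}\le 2\mathbb{E}|X(0)|^{2p}+C\Big(1+\int_{0}^{T}\mathbb{E}\sup_{0\le u\le s}|X(u)|^{2p}\,ds\Big),
\end{align*}
from which Gr\"onwall's lemma concludes.

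The main obstacle, as flagged in Lemma~\ref{cbolemma3.3} where the $N$-dependent estimate (\ref{cbo_eqn_3.2}) was invoked, is that Gr\"onwall requires $\mathbb{E}\sup_{0\le t\le T}|X(t)|^{2p}$ to be \emph{a priori} finite, and here there is no analogue of (\ref{cbo_eqn_3.2}) to quote directly. I would resolve this by exploiting Theorem~\ref{mf_wel_pos_th}: once well-posedness is established, $\bar{X}(\cdot)$ is a fixed \emph{deterministic} function on $[0,T]$, shown there to be H\"older continuous and hence bounded. Consequently (\ref{cbomfsdep}) reduces to a jump-diffusion SDE whose coefficients are globally Lipschitz and of linear growth in the state $x$, so standard moment estimates for such SDEs (as in \cite[Lemma 2.3]{cbo15}, used already in the proof of Theorem~\ref{cbo_thrm_3.2}) guarantee $\mathbb{E}\sup_{0\le t\le T}|X(t)|^{2p}<\infty$, legitimising the Gr\"onwall step. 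Alternatively, one may localise with the stopping times $\tau_{R}=\inf\{t\ge0:|X(t)|\ge R\}$, carry out the estimates for $X(t\wedge\tau_{R})$, obtain a bound independent of $R$, and pass to the limit via Fatou's lemma. Finally, the extension from integer $p$ to all real $p\ge1$ follows from H\"older's inequality, exactly as at the end of Lemma~\ref{cbolemma3.3}.
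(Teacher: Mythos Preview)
Your proposal is correct and follows essentially the same approach as the paper: It\^o's formula for $|X(t)|^{2p}$, the same Young/BDG/H\"older estimates for drift, diffusion and jump terms, the mean-field bound $|\bar{X}(t)|^{2}\le L_{1}+L_{2}\mathbb{E}|X(t)|^{2}$, and Gr\"onwall. The only difference is emphasis: the paper takes your ``alternative'' as its primary route, localising with $\theta_{R}=\inf\{s\ge0:|X(s)|\ge R\}$ from the outset, using Doob's optional stopping for the Poisson term, obtaining an $R$-independent bound via Gr\"onwall, and then passing to the limit by Fatou's lemma; your primary justification of a priori finiteness via the boundedness of $\bar{X}(\cdot)$ (established in Theorem~\ref{mf_wel_pos_th}) and standard linear-growth moment bounds is a perfectly valid shortcut that the paper does not exploit.
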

\begin{proof}
Recall that under the assumptions of this theorem, Theorem~\ref{mf_wel_pos_th} guarantees existence of a strong solution of (\ref{cbomfsdep}).

Let $p$ be a positive integer. Let us denote $ \theta_{R} = \inf\{s \geq 0\; ; \; |X(s)| \geq R\}$. Using Ito's formula, we obtain
\begin{align}
    |X(t)|^{2p} &= |X(0)|^{2p} - 2p \int_{0}^{t}\beta(s)|X(s)|^{2p-2}\big(X(s)\cdot(X(s) - \bar{X}(s))\big)ds \nonumber \\ &
    \;\;\;\; + 2\sqrt{2}p \int_{0}^{t}\sigma(s)|X(s)|^{2p-2}\big(X(s)\cdot(\diag(X(s)- \bar{X}(s))dW(s))\big) \nonumber\\ &
    \;\;\;\; + 4p(p-1)\int_{0}^{t}\sigma^{2}(s)|X(s)|^{2p-4}|\diag(X(s) -\bar{X}(s))X(s)|^{2}ds \nonumber\\ & 
   \;\;\;\; + 2p\int_{0}^{t}\sigma^{2}(s)|X(s)|^{2p-2}|\diag(X(s) - \bar{X}(s))|^{2}ds \nonumber\\ &
    \;\;\;\; + \int_{0}^{t}\int_{\mathbb{R}^{d}}(|X(s^{-}) + \gamma(s)\diag(X(s^{-})-\bar{X}(s^{-}))z|^{2p} - |X(s^{-})|^{2p})\mathcal{N}(ds,dz).
   \nonumber
\end{align} 
First taking suprema over $0\leq t\leq T\wedge \theta_{R}$ and then taking expectation on both sides, we get
\begin{align}
\mathbb{E}&\sup_{0\leq t\leq T \wedge \theta_{R}}|X(t)|^{2p} \leq  \mathbb{E}|X(0)|^{2p} + C\mathbb{E}\int_{0}^{T\wedge \theta_{R}}|X(s)|^{2p-2}\big|X(s)\cdot(X(s) - \bar{X}(s))\big|ds \nonumber \\ & \;\;\;\; + C\mathbb{E}\sup_{0\leq t\leq T\wedge\theta_{R}}\bigg|\int_{0}^{t}|X(s)|^{2p-2}\big(X(s)\cdot(\diag(X(s)- \bar{X}(s))dW(s))\big)\bigg| \nonumber \\ &
    \;\;\;\;+ C\mathbb{E}\int_{0}^{T\wedge \theta_{R}}|X(s)|^{2p-4}|\diag(X(s) -\bar{X}(s))X(s)|^{2}ds\nonumber
    \\ &  \;\;\;\; +C\mathbb{E}\int_{0}^{T\wedge \theta_{R}}|X(s)|^{2p-2}|\diag(X(s) - \bar{X}(s))|^{2}ds \nonumber\\ &
    \;\;\;\;+ \mathbb{E}\sup_{0\leq t\leq T\wedge \theta_{R}}\int_{0}^{t}\int_{\mathbb{R}^{d}}(|X(s^{-}) + \gamma(s)\diag(X(s^{-})-\bar{X}(s^{-}))z|^{2p} - |X(s^{-})|^{2p})\mathcal{N}(ds,dz).\label{w3.5}
\end{align}
To deal with the second term in (\ref{w3.5}), we use Young's inequality and ascertain
\begin{align}
    &|X(s)|^{2p-2}\big|X(s)\cdot(X(s) -\bar{X}(s))\big|
     \leq  |X(s)|^{2p} + |X(s)|^{2p-1}|\bar{X}(s)| \nonumber \\&
     \leq \frac{4p-1}{2p}|X(s)|^{2p} + \frac{1}{2p}|\bar{X}(s)|^{2p} \leq C(|X(s)|^{2p} + |\bar{X}(s)|^{2p}).\label{cbo_eq_3.14}
\end{align}

Using Burkholder-Davis-Gundy inequality, we have
\begin{align}
    \mathbb{E}&\sup_{0\leq t\leq T\wedge\theta_{R}}\bigg|\int_{0}^{t}|X(s)|^{2p-2}\big(X(s)\cdot(\diag(X(s)- \bar{X}(s))dW(s))\big)\bigg| \nonumber \\ & \leq 
     \mathbb{E}\bigg(\int_{0}^{T\wedge \theta_{R}}|X(s)|^{4p-2}|X(s)- \bar{X}(s)|^{2}ds\bigg)^{1/2} \nonumber  \\ & \leq \mathbb{E}\Bigg(\sup_{0\leq t \leq T \wedge \theta_{R}} |X(t)|^{2p-1}\bigg(\int_{0}^{T\wedge \theta_{R}}|X(s) - \bar{X}(s))|^{2}ds\bigg)^{1/2}\Bigg).\label{cbo_eq_3.15}
\end{align}
We apply generalized Young's inequality $\big(ab \leq (\epsilon a^{q_{1}})/q_{1} + b^{q_{2}}/(\epsilon^{q_{2}/q_{1}}q_{2}),\; \epsilon, q_{1},q_{2} >0, 1/q_{1} + 1/q_{2} = 1$\big) and Holder's inequality on the right hand side of (\ref{cbo_eq_3.15}) to get
\begin{align}
    \mathbb{E}&\sup_{0 \leq t\leq T\wedge \theta_{R}}\bigg|\int_{0}^{t}|X(s)|^{2p-2}\big(X(s) \cdot \diag(X(s) - \bar{X}(s))dW(s)\big)\bigg|\nonumber \\ & \leq \frac{1}{2}\mathbb{E}\sup_{0\leq t\leq T\wedge \theta_{R}}|X(t)|^{2p} + C\mathbb{E}\bigg(\int_{0}^{T\wedge \theta_{R}}|X(s) - \bar{X}(s)|^{2}ds\bigg)^{p}\nonumber \\ &  \leq \frac{1}{2}\mathbb{E}\sup_{0\leq t\leq T\wedge \theta_{R}}|X(t)|^{2p} + C\mathbb{E}\bigg(\int_{0}^{T\wedge \theta_{R}}|X(s) - \bar{X}(s)|^{2p}ds\bigg)\nonumber \\ & 
   \leq  \frac{1}{2}\mathbb{E}\sup_{0\leq t\leq T\wedge \theta_{R}}|X(t)|^{2p} +  C\mathbb{E}\bigg(\int_{0}^{T\wedge \theta_{R}} \big(|X(s)|^{2p} + |\bar{X}(s)|^{2p} \big) ds\bigg).\label{cbo_eq_3.16}
\end{align}

We have the following estimate to use in the fourth term in (\ref{w3.5}):
\begin{align}
    |X(s)|^{2p-4}&|\diag(X(s)- \bar{X}(s))X(s)|^{2}  \leq |X(s)|^{2p-4}(|X(s)|^{2} + (X(s)\cdot\bar{X}(s)))^{2}
    \nonumber \\ &\leq 2|X(s)|^{2p} + 2|X(s)|^{2p-2}|\bar{X}(s)|^{2} \leq C\big(|X(s)|^{2p} + |\bar{X}(s)|^{2p}\big).\label{w3.8}
\end{align}

We make use of Minkowski's inequality to get
\begin{align*}
  |X(s)|^{2p-2}|\diag(X(s) - \bar{X}(s))|^{2} = |X(s)|^{2p-2}|X(s) - \bar{X}(s)|^{2}  \leq 2|X(s)|^{2p} + 2|X(s)|^{2p-2}|\bar{X}(s)|^{2},
\end{align*}
then Young's inequality implies
\begin{align}
    |X(s)|^{2p-2}|X(s) - \bar{X}(s)|^{2}  \leq C(|X(s)|^{2p} + |\bar{X}(s)|^{2p}). \label{w3.9}
\end{align}

Now, we find an estimate for the last term in (\ref{w3.5}).  Using the Cauchy-Bunyakowsky-Schwartz inequality, we obtain
\begin{align}
   \mathbb{E}&\sup_{0\leq t\leq T\wedge \theta_{R}}\int_{0}^{t}\int_{\mathbb{R}^{d}}(|X(s^{-}) + \gamma(s)\diag(X(s^{-})-\bar{X}(s^{-}))z|^{2p} - |X(s^{-})|^{2p})\mathcal{N}(ds,dz)\nonumber \\ & \leq \mathbb{E}\sup_{0\leq t\leq T\wedge \theta_{R}}\int_{0}^{t}\int_{\mathbb{R}^{d}}2^{2p-1}(|X(s^{-})|^{2p} +  |\gamma(s)\diag(X(s^{-})-\bar{X}(s^{-}))z|^{2p})- |X(s^{-})|^{2p}\mathcal{N}(ds,dz) \nonumber 
   \\ & \leq  C\mathbb{E}\int_{0}^{T\wedge \theta_{R}}\int_{\mathbb{R}^{d}}(|X(s^{-})|^{2p} + |\gamma(s)\diag(X(s^{-})-\bar{X}(s^{-}))z|^{2p})\mathcal{N}(ds,dz). \nonumber
\end{align} 
Using Doob's optional stopping theorem \cite[Theorem 2.2.1]{cbos11}, we get
\begin{align}
   \mathbb{E}&\sup_{0\leq t\leq T\wedge \theta_{R}}\int_{0}^{t}\int_{\mathbb{R}^{d}}(|X(s^{-}) + \gamma(s)\diag(X(s^{-})-\bar{X}(s^{-}))z|^{2p} - |X(s^{-})|^{2p})\mathcal{N}(ds,dz)\nonumber  \\ & \leq  C\mathbb{E}\int_{0}^{T\wedge \theta_{R}}\int_{\mathbb{R}^{d}}(|X(s)|^{2p} + |\gamma(s)\diag(X(s)-\bar{X}(s))z|^{2p})\rho_{z}(z)dzds \nonumber \\ & \leq  C\mathbb{E}\int_{0}^{T\wedge \theta_{R}}\Big(|X(s)|^{2p} + |\bar{X}(s)|^{2p}\Big)\Big(1+\int_{\mathbb{R}^{d}}|z|^{2p}\rho_{z}(z)dz\Big)ds \nonumber 
 \\ &
  \leq C\mathbb{E}\int_{0}^{T\wedge \theta_{R}}\big(|X(s)|^{2p} + |\bar{X}(s)|^{2p}\big)ds. \label{w3.10}
\end{align}

We have the following result under Assumptions~\ref{cboh3.1}, \ref{cboh3.2}-\ref{cboasm1.4} \cite[Lemma 3.3]{cbo2}:
\begin{align}
    |\bar{X}(s)|^{2} \leq L_{1} + L_{2}\mathbb{E}|X(s)|^{2}, \label{cbo_neweq_3.29}
\end{align}
where $L_{1}$ and $L_{2}$ are from (\ref{y4.2}). 

Substituting (\ref{cbo_eq_3.14}), (\ref{cbo_eq_3.16})-(\ref{cbo_neweq_3.29}) in (\ref{w3.5}), using Holder's inequality, we arrive at the following bound:
\begin{align*}
    \mathbb{E}\sup_{0\leq t\leq T\wedge \theta_{R}}|X(t)|^{2p} &\leq 2\mathbb{E}|X(0)|^{2p} +  C \mathbb{E}\int_{0}^{T\wedge \theta_{R}}(|X(s)|^{2p} + |\bar{X}(s)|^{2p})ds  
       \\ & \leq C  + C\mathbb{E}\int_{0}^{T\wedge \theta_{R}}(1 + |X(s)|^{2p} + \mathbb{E}|X(s)|^{2p})ds 
       \\ & \leq C  + C\int_{0}^{T} \mathbb{E}\sup_{0\leq u\leq s \wedge \theta_{R}}|X(u)|^{2p} ds,
\end{align*}
which on using Gr\"{o}nwall's lemma gives
\begin{align*}
    \mathbb{E}\sup_{0\leq t\leq T\wedge \theta_{R}}|X(t)|^{2p} \leq C,
\end{align*}
where $C$ is independent of $R$. Then, tending $R\rightarrow \infty$ and applying Fatau's lemma give the desired result.
\end{proof}

\section{Convergence results}\label{cbo_conv_res}
In Section \ref{cbo_sec_gl_min}, we prove the convergence of $X(t)$, which is the mean field limit of the particle system (\ref{cboeq1.8}), towards global minimizer.  This convergence proof is based on the Laplace principle. Our approach in Section~\ref{cbo_sec_gl_min} is similar to \cite[Appendix A]{cbo3}. The main result (Theorem~\ref{cbo_thrm_4.3}) of Section~\ref{cbo_sec_gl_min} differs from \cite{cbo3} in three respects. First, in our model (\ref{cboeq1.8}),  the parameters are time-dependent. Second, we need to treat the jump part of (\ref{cboeq1.8}). Third, the analysis in \cite{cbo3} is done for quadratic loss function but the assumptions that we impose on the objective function here are less restrictive.  
In Section~\ref{cbo_sec_mf}, we prove convergence of the interacting particle system (\ref{cboeq1.8}) towards the mean-field limit (\ref{cbomfsdep}) as $N\rightarrow \infty$. 
In Section~\ref{cbo_conv_ns}, we  prove uniform in $N$ convergence of the Euler scheme (\ref{cbo_dis_ns}) to (\ref{cboeq1.8}) as $h \rightarrow 0$, where $h$ is the discretization step.

\subsection{Convergence towards the global minimum}\label{cbo_sec_gl_min}
The aim of this section is to show that the non-linear process $X(t)$ driven by the distribution dependent SDEs (\ref{cbomfsde}) converges to a point $x^{*} $ which lies in a close vicinity of the global minimum which we denote as $x_{\min}$.  To this end, we will first prove that  $\var(t) := \mathbb{E}|X(t) - \mathbb{E}(X(t))|^{2}  $  satisfies a differential inequality which, with particular choice of parameters,  implies exponential decay of $\var(t)$ as $t \rightarrow \infty$. We also obtain a differential inequality for $M(t) := \mathbb{E}\big(e^{-\alpha f(X(t))}\big )$.

The approach that we follow in this section is along the lines of \cite{cbo2,cbo3} but with necessary adjustments for the jump term in (\ref{cbomfsde}).

\begin{lemma} \label{cbo_prop_4.1}
Under Assumptions~\ref{cboh3.1}, \ref{cboh3.2}-\ref{cboasm1.4}, the following inequality is satisfied for $\var(t)$:
\begin{align}
\frac{d}{dt}\var(t) &\leq - \bigg(2\beta(t) - \big(2\sigma^{2}(t) +\lambda\gamma^{2}(t)\mathbb{E}|\Zstroke|^{2}\big)\Big( 1+ \frac{e^{-\alpha f_{m}}}{M(t^{})} \Big) \bigg)\var(t^{}). \label{h4.1} 
\end{align}
\end{lemma}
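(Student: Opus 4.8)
The plan is to write $\var(t) = \mathbb{E}|X(t)|^{2} - |m(t)|^{2}$ with $m(t):=\mathbb{E}(X(t))$, and to differentiate each piece separately via Ito's formula for the jump-diffusion (\ref{cbomfsdep}). First I would apply Ito's formula to $|X(t)|^{2}$ and take expectations. The Wiener martingale vanishes in expectation (integrability being guaranteed by the moment bound of Theorem~\ref{cbolem3.6}), the continuous quadratic-variation term contributes $2\sigma^{2}(t)\mathbb{E}|X(t)-\bar{X}(t)|^{2}$, and the increment of $|X|^{2}$ across a jump of size $z$ expands as $2\gamma(t)X\cdot\diag(X-\bar{X})z + \gamma^{2}(t)|\diag(X-\bar{X})z|^{2}$. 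Integrating against the compensator $\lambda\rho_{z}(z)\,dz\,dt$, the linear term dies because $\mathbb{E}(Z)$ is the zero vector, while the quadratic term reduces to $\lambda\gamma^{2}(t)\mathbb{E}|\Zstroke|^{2}\,\mathbb{E}|X(t)-\bar{X}(t)|^{2}$ after using the product structure $\rho_{z}(z)=\prod_{l}\rho_{\zstroke}(z_{l})$ and the common second moment $\mathbb{E}(Z_{l}^{2})=\mathbb{E}|\Zstroke|^{2}$. This yields
\[
\frac{d}{dt}\mathbb{E}|X(t)|^{2} = -2\beta(t)\mathbb{E}\big(X(t)\cdot(X(t)-\bar{X}(t))\big) + \big(2\sigma^{2}(t)+\lambda\gamma^{2}(t)\mathbb{E}|\Zstroke|^{2}\big)\mathbb{E}|X(t)-\bar{X}(t)|^{2}.
\]

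Next I would compute $m(t)$ by taking expectations directly in (\ref{cbomfsdep}): both the Wiener integral and the compensated jump integral are mean-zero (the latter again because $\mathbb{E}(Z)=0$ and $\bar{X}(t)$ is deterministic), giving $\frac{d}{dt}m(t) = -\beta(t)(m(t)-\bar{X}(t))$ and hence $\frac{d}{dt}|m(t)|^{2} = -2\beta(t)\,m(t)\cdot(m(t)-\bar{X}(t))$. Subtracting this from the previous display and collecting the $\beta$-terms, the cross terms involving $\bar{X}(t)$ cancel and $\mathbb{E}|X|^{2}-|m|^{2}$ reassembles, so the entire drift contribution collapses to $-2\beta(t)\var(t)$, leaving
\[
\frac{d}{dt}\var(t) = -2\beta(t)\var(t) + \big(2\sigma^{2}(t)+\lambda\gamma^{2}(t)\mathbb{E}|\Zstroke|^{2}\big)\mathbb{E}|X(t)-\bar{X}(t)|^{2}.
\]

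It remains to bound $\mathbb{E}|X(t)-\bar{X}(t)|^{2}$. Writing $X-\bar{X}=(X-m)+(m-\bar{X})$ and using $\mathbb{E}(X-m)=0$ gives the orthogonal decomposition $\mathbb{E}|X-\bar{X}|^{2}=\var(t)+|m(t)-\bar{X}(t)|^{2}$, so the task reduces to estimating $|m-\bar{X}|^{2}$. Here I would use the identity $\bar{X}(t)-m(t)=M(t)^{-1}\mathbb{E}\big[(X(t)-m(t))e^{-\alpha f(X(t))}\big]$, which follows from the definition (\ref{eqcbo2.12}) of $\bar{X}$ together with $M(t)=\mathbb{E}(e^{-\alpha f(X(t))})$. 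Applying the Cauchy-Bunyakowsky-Schwartz inequality with the splitting $e^{-\alpha f}=e^{-\alpha f/2}\cdot e^{-\alpha f/2}$ and then $e^{-\alpha f}\leq e^{-\alpha f_{m}}$ (Assumption~\ref{cboh3.1}, since $f\geq f_{m}$) gives $|\bar{X}-m|^{2}\leq M(t)^{-2}\,\mathbb{E}[|X-m|^{2}e^{-\alpha f}]\,M(t)\leq \tfrac{e^{-\alpha f_{m}}}{M(t)}\var(t)$. Substituting back, $\mathbb{E}|X-\bar{X}|^{2}\leq\big(1+e^{-\alpha f_{m}}/M(t)\big)\var(t)$, and inserting this into the variance identity produces exactly (\ref{h4.1}).

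For rigour I would first carry out the Ito computation up to a stopping time $\theta_{R}$ (as in Theorem~\ref{cbolem3.6}) so that all stochastic integrals are genuine martingales with finite expectation, and only then let $R\to\infty$, the uniform moment bounds of Theorem~\ref{cbolem3.6} justifying both the passage to the limit and the vanishing of the martingale expectations. The main technical point to handle carefully is the jump contribution: one must expand $|X(t^{-})+\gamma(t)\diag(X(t^{-})-\bar{X}(t^{-}))z|^{2}$ correctly, verify that the term linear in $z$ integrates to zero against $\rho_{z}$, and reduce the quadratic term to the scalar second moment $\mathbb{E}|\Zstroke|^{2}$; the remainder is routine bookkeeping.
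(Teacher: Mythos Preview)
Your proof is correct and follows essentially the same route as the paper: Ito's formula for the jump-diffusion, the orthogonal decomposition $\mathbb{E}|X-\bar{X}|^{2}=\var(t)+|m-\bar{X}|^{2}$, and the bound $|m-\bar{X}|^{2}\leq (e^{-\alpha f_{m}}/M(t))\var(t)$. The only cosmetic differences are that the paper applies Ito directly to $|X(t)-\mathbb{E}X(t)|^{2}$ rather than to $|X(t)|^{2}$ and $|m(t)|^{2}$ separately, and obtains the last bound via Jensen's inequality with respect to the tilted probability measure $e^{-\alpha f}\mathcal{L}_{X(t)}/M(t)$ rather than via Cauchy--Schwarz.
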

\begin{proof}
Using Ito's formula, we have
\begin{align}
    |X(t) &- \mathbb{E}X(t)|^{2}  = |X(0) - \mathbb{E}X(0)|^{2}-2\int_{0}^{t}\beta(s)(X(s) - \mathbb{E}X(s))\cdot(X(s) - \bar{X}(s))ds  \nonumber \\ &- 2\int_{0}^{t}(X(s) - \mathbb{E}X(s))\cdot d\mathbb{E}X(s) + 2\int_{0}^{t}\sigma^{2}(s)|X(s) - \bar{X}(s)|^{2}ds  \nonumber \\ & + 2\sqrt{2}\int_{0}^{t}\sigma(s)(X(s) - \mathbb{E}X(s))\cdot \big(\diag(X(s) - \bar{X}(s)) dW(s)\big)\nonumber  \\ &  + \int_{0}^{t}\int_{\mathbb{R}^{d}}\big\{|X(s^{-}) - \mathbb{E}X(s^{-}) + \gamma(s)\diag(X(s^{-})  - \bar{X}(s^{-}))z|^{2} - |X(s^{-})- \mathbb{E}(X(s^{-}))|^{2}\big\} \mathcal{N}(ds,dz). \nonumber
\end{align}
Taking expectation on both sides, we get
\begin{align}
    &\var(t)  = \var(0) -2\mathbb{E}\int_{0}^{t}\beta(s)\mathbb{E}\big((X(s^{}) - \mathbb{E}X(s^{}))\cdot(X(s^{}) - \bar{X}(s^{}))\big)dt + 2\int_{0}^{t}\sigma^{2}(s)\mathbb{E}|X(s^{}) - \bar{X}(s^{})|^{2}ds \nonumber \\ 
    &  \;\;\;\; +  \lambda \gamma^{2}(s)\int_{0}^{t}\int_{\mathbb{R}^{d}}\mathbb{E}|\diag(X(s^{}) - \bar{X}(s^{}))z|^{2}\rho_{z}(z)dzds \nonumber \\ 
   & = \var(0) -2\int_{0}^{t} \big(\beta(s)\var(s^{}) + 2\sigma^{2}(s)\mathbb{E}|X(s^{}) - \bar{X}(s^{})|^{2}  +  \lambda \gamma^{2}(s)\mathbb{E}|\Zstroke|^{2}\mathbb{E}|X(s^{}) - \bar{X}(s^{})|^{2} \big) ds, \label{cbo_neweq_4.2}
\end{align}
since 
\begin{align*}
&\mathbb{E}\big((X(t^{}) - \mathbb{E}X(t^{}))\cdot(\mathbb{E}X(t^{}) - \bar{X}(t^{}))\big) = 0, \\ & |X(t^{}) - \mathbb{E}X(t^{}) + \diag(X(t^{}) -\bar{X}(t^{}))z|^{2} = |X(t^{}) - \mathbb{E}X(t^{})|^{2} + |\diag(X(t^{}) - \bar{X}(t^{}))z|^{2} \\ & \;\;\;\;\;\;\;\;\;\;\;\;\;\;\;\;\; + 2\big((X(t^{}) - \mathbb{E}X(t^{}))\cdot\diag(X(t^{}) - \bar{X}(t^{}))z\big),  \\ & \int_{\mathbb{R}^{d}}\big((X(t^{}) - \mathbb{E}X(t^{}))\cdot \diag(X(t^{})-\bar{X}(t^{}))z\big)\rho_{z}(z)dz = 0.
\end{align*}
Moreover, $ \int_{\mathbb{R}^{d}} \sum_{l=1}^{d} 
(X(t^{}) - \bar{X}(t^{}))_{l}^{2}z_{l}^{2} \rho_{z}(z)dz = \sum_{l=1}^{d} (X(t^{}) - \bar{X}(t^{}))_{l}^{2} \int_{\mathbb{R}{^d}}z_{l}^{2}\prod_{i=1}^{d}\rho_{\zstroke}(z_{i})dz = |X(t^{}) - \bar{X}(t^{})|^{2}\mathbb{E}|\Zstroke|^{2}   $, since each component $Z_{l}$ of $Z$ is distributed as $\Zstroke$. 

We also have 
\begin{align}\mathbb{E}|X(t^{})  - \bar{X}(t^{})|^{2} = \var(t)  + |\mathbb{E}X(t^{}) - \bar{X}(t^{})|^{2}. \label{cbo_eq_4.2}
\end{align} 
We estimate  the term $|\mathbb{E}(X(t^{})) - \bar{X}(t^{})|^{2}$ using Jensen's inequality as
\begin{align}\label{cboeq4.2}
    |\mathbb{E}X(t^{}) - \bar{X}(t^{})|^{2} & = \bigg| \mathbb{E}X(t^{}) - \frac{\mathbb{E}X(t^{})e^{-\alpha f(X(t^{}))}}{\mathbb{E}e^{-\alpha f(X(t^{}))}}\bigg|^{2} \nonumber  =
    \bigg|\mathbb{E} \bigg( \Big(\mathbb{E}X(t^{}) - X(t^{})\Big)\frac{e^{-\alpha f(X(t^{}))}}{\mathbb{E}e^{-\alpha f(X(t^{}))} }\bigg)\bigg|^{2} \nonumber\\  & = \bigg|\int_{\mathbb{R}^{d}}\big(\mathbb{E}X(t) - x\big) \vartheta_{X(t)}(dx)\bigg|^{2}  \leq \int_{\mathbb{R}^{d}}\big|\mathbb{E}X(t) - x\big|^{2} \vartheta_{X(t)}(dx)\nonumber \\ & = \mathbb{E}\bigg(|X(t^{}) - \mathbb{E}(X(t^{}))|^{2} \frac{e^{-\alpha f(X(t^{}))}}{\mathbb{E}e^{-\alpha f(X(t^{}))}}\bigg)\leq \frac{e^{-\alpha f_{m}}}{\M(t^{})}\var(t^{}),
\end{align}
where $\vartheta_{X(t)}(dx) = e^{-\alpha f(x)}/\mathbb{E}(e^{-\alpha f(X(t))}) \mathcal{L}_{X(t)}(dx) $ which implies $\int_{\mathbb{R}^{d}}\vartheta_{X(t)}(dx) = 1$.
Using (\ref{cbo_eq_4.2}) and (\ref{cboeq4.2}) in (\ref{cbo_neweq_4.2}) gives the targeted result.
\end{proof}

To prove the main result of this section, we need an additional  inequality, which is proved under the following assumption. 

\begin{assumption}\label{cbohas4.1}
$f \in C^{2}(\mathbb{R}^{d})$ and there exist three constants $K_{1}$,$K_{2}, K_{3} > 0$ such that the following inequalities are satisfied for sufficiently large $\alpha$:
\begin{itemize}
        \item[(i)] $(\nabla f(x) -\nabla f(y))\cdot (x-y) \geq -K_{1}|x-y|^{2}$ for all $x$, $ y \in \mathbb{R}^{d}$. 
    
    \item[(ii)] $ \alpha\Big(\frac{\partial f}{\partial x_{i}}\Big)^{2} -\frac{\partial^{2} f}{\partial x_{i}^{2}} \geq  -K_{2}$ for all $i = 1,\dots,d$ and $x \in \mathbb{R}^{d}$.
    
    \item[(iii)] $\mathbb{E}f(x+ \diag(x)Z) - f(x) \leq  K_{3} |x|^{2}\mathbb{E}|\Zstroke|^{2} $, \\ where $Z$ is a d-dimensional random vector  and $\Zstroke$ is real valued random variable introduced  in Section~\ref{sec_our_mod}.
\end{itemize}
\end{assumption}

We note that for $f(x) = 1+ |x|^{2}$, $x \in \mathbb{R}^{d}$, we have $\mathbb{E}|x+ \diag(x)Z|^{2} - |x|^{2} = \mathbb{E}|\diag(x)Z|^{2} = \sum_{l=1}^{d}\mathbb{E}(x_{l}Z_{l})^{2}$. However, each $Z_{l}$ is distributed as $\Zstroke$. Hence, $\mathbb{E}|x+ \diag(x)Z|^{2} - |x|^{2} = |x|^{2}\mathbb{E}|\Zstroke|^{2}$. The conditions $(i)$ and $(ii)$ are straightforward to verify for $1+|x|^{2}$. This implies the existence of a function satisfying the above assumption. This ensures that the class of functions satisfying the above assumption is not empty and is consistent with Assumptions~\ref{cboh3.1}, \ref{cboh3.2}-\ref{cboasm1.4}.  The most important implication is that the above assumption allows $f$ to have quadratic growth which is important for several loss functions in machine learning problems.

In \cite{cbo2}, the authors  assumed $f \in C^{2}(\mathbb{R}^{d})$, the norm of Hessian of $f$  being bounded by a constant, and the norm of gradient and Laplacian of $f$ satisfying the inequality, $\Delta f \leq c_{0} + c_{1}|\nabla f|^{2}$, where $c_{0}$ and $ c_{1}$ are positive constants. Therefore, in Assumption~\ref{cbohas4.1}, we have imposed restrictions on $f$ similar to \cite{cbo2} in the essence of regularity but adapted to our jump-diffusion case with component-wise Wiener noise.

\begin{lemma}\label{cbo_lem_4.2}
The following inequality holds under Assumptions~\ref{cboh3.1}, \ref{cboh3.2}-\ref{cboasm1.4} and \ref{cbohas4.1}:  
\begin{align}
    \frac{d}{dt}\M^{2}(t) &\geq  - 4\alpha e^{-\alpha f_{m}}\Big(\beta(t)K_{1} + \sigma^{2}(t)K_{2} + \lambda \gamma^{2}(t)K_{3}\mathbb{E}|\Zstroke|^{2}\Big)\var(t^{}).\label{h4.2}
\end{align}
\end{lemma}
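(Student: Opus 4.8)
The plan is to apply It\^o's formula for jump-diffusions to the function $g(x):=e^{-\alpha f(x)}$ along the mean-field SDE (\ref{cbomfsdep}), take expectations, and then write $\frac{d}{dt}\M^{2}(t)=2\M(t)\frac{d}{dt}\M(t)$. Since $g_{x_i}=-\alpha f_{x_i}g$ and $g_{x_ix_i}=\alpha(\alpha f_{x_i}^2-f_{x_ix_i})g$, while the diffusion coefficient squares to $2\sigma^{2}(t)(X_i-\bar X_i)^2$, the generator splits into drift, second-order, and jump-compensator parts, giving $\frac{d}{dt}\M(t)=\mathbb{E}[\mathcal{A}g(X(t))]$ with
\begin{align*}
\mathbb{E}[\mathcal{A}g]&=\alpha\beta(t)\,\mathbb{E}\big[g(X)\,\nabla f(X)\cdot(X-\bar X)\big]+\alpha\sigma^{2}(t)\,\mathbb{E}\Big[g(X)\textstyle\sum_{i=1}^{d}(X_i-\bar X_i)^2\big(\alpha f_{x_i}^2-f_{x_ix_i}\big)\Big]\\
&\quad+\lambda\,\mathbb{E}\Big[\int_{\mathbb{R}^{d}}\big(g(X+\gamma(t)\diag(X-\bar X)z)-g(X)\big)\rho_{z}(z)\,dz\Big],
\end{align*}
all arguments at time $t$. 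The moment bounds of Theorem~\ref{cbolem3.6} justify differentiating under the expectation.

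Two elementary facts drive the estimates. First, the definition of $\bar X$ yields the orthogonality identity $\mathbb{E}[g(X)(X-\bar X)]=0$. Second, since $\bar X$ minimises $c\mapsto\mathbb{E}[g(X)|X-c|^2]$ and $0<g\le e^{-\alpha f_m}$, one has the weighted-variance bound $\mathbb{E}[g(X)|X-\bar X|^2]\le\mathbb{E}[g(X)|X-\mathbb{E}X|^2]\le e^{-\alpha f_m}\var(t)$.

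Now each term is bounded below. For the drift term I would insert $\nabla f(\bar X)$ and use the orthogonality identity to discard it, so the term equals $\alpha\beta(t)\mathbb{E}[g(X)(\nabla f(X)-\nabla f(\bar X))\cdot(X-\bar X)]$; Assumption~\ref{cbohas4.1}(i) then gives the lower bound $-\alpha\beta(t)K_1\mathbb{E}[g(X)|X-\bar X|^2]$. For the second-order term, Assumption~\ref{cbohas4.1}(ii) gives $\sum_i(X_i-\bar X_i)^2(\alpha f_{x_i}^2-f_{x_ix_i})\ge-K_2|X-\bar X|^2$, hence the lower bound $-\alpha\sigma^{2}(t)K_2\mathbb{E}[g(X)|X-\bar X|^2]$. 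The jump term is the delicate one: Jensen's inequality for the convex map $u\mapsto e^{-\alpha u}$ gives $\int g(X+\gamma\diag(X-\bar X)z)\rho_{z}\,dz\ge e^{-\alpha\mathbb{E}_{Z}f(X+\gamma\diag(X-\bar X)Z)}$, and the tangent-line bound $e^{-\alpha a}-e^{-\alpha b}\ge-\alpha e^{-\alpha b}(a-b)$ with $b=f(X)$ reduces the increment to $-\alpha g(X)\big(\mathbb{E}_{Z}f(X+\gamma\diag(X-\bar X)Z)-f(X)\big)$; Assumption~\ref{cbohas4.1}(iii), applied with the scaled jump size $\gamma(t)Z$, bounds this by $K_3\gamma^{2}(t)|X-\bar X|^2\mathbb{E}|\Zstroke|^{2}$, producing the lower bound $-\lambda\alpha K_3\gamma^{2}(t)\mathbb{E}|\Zstroke|^{2}\,\mathbb{E}[g(X)|X-\bar X|^2]$.

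Summing the three bounds and applying the weighted-variance bound gives $\frac{d}{dt}\M(t)\ge-\alpha e^{-\alpha f_m}\big(\beta(t)K_1+\sigma^{2}(t)K_2+\lambda\gamma^{2}(t)K_3\mathbb{E}|\Zstroke|^{2}\big)\var(t)$. Multiplying by $2\M(t)$ and using $0<\M(t)\le e^{-\alpha f_m}$ together with nonnegativity of the bracketed factor yields (\ref{h4.2}); the stated constant follows because the intermediate bound carries $e^{-2\alpha f_m}\le e^{-\alpha f_m}$. The main obstacle is the jump term: one must pass from a difference of exponentials, nonlinear in $f$, to an expression linear in the $f$-increment before Assumption~\ref{cbohas4.1}(iii) can be invoked, which is exactly what the Jensen/tangent-line pairing achieves, while care is needed to match the displacement $\gamma(t)\diag(X-\bar X)z$ to the form assumed in Assumption~\ref{cbohas4.1}(iii).
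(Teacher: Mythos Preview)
Your proof is correct and follows the same overall architecture as the paper: apply It\^o's formula to $e^{-\alpha f(X(t))}$, take expectations, bound the drift, diffusion, and jump contributions separately via Assumption~\ref{cbohas4.1}, and then pass to $\M^{2}$. The drift and second-order treatments are identical to the paper's.

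There are two tactical differences worth noting. For the jump term, the paper uses the global Lipschitz bound $e^{-\alpha f(x)}-e^{-\alpha f(y)}\ge -\alpha e^{-\alpha f_m}|f(x)-f(y)|$ and then invokes Assumption~\ref{cbohas4.1}(iii); your Jensen-plus-tangent-line route is arguably cleaner here, because it lands directly on the \emph{signed} increment $\mathbb{E}_{Z}f(\cdot)-f(X)$ that (iii) actually bounds, rather than passing through an absolute value. For the variance step, the paper bounds the full $\mathbb{E}|X-\bar X|^{2}$ via the decomposition $\var(t)+|\mathbb{E}X-\bar X|^{2}$ together with (\ref{cboeq4.2}), obtaining $\mathbb{E}|X-\bar X|^{2}\le 2\frac{e^{-\alpha f_m}}{\M(t)}\var(t)$, whereas you keep the weight $g(X)$ and use the minimising property of $\bar X$ to get $\mathbb{E}[g(X)|X-\bar X|^{2}]\le e^{-\alpha f_m}\var(t)$. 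Your route gives the sharper intermediate constant $2$ in place of $4$ before weakening to the stated bound; both yield (\ref{h4.2}). Your caveat about matching the displacement $\gamma(t)\diag(X-\bar X)z$ to the literal form of Assumption~\ref{cbohas4.1}(iii) applies equally to the paper's argument, which also uses the two-variable reading of that assumption.
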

\begin{proof}
Using Ito's formula, we get 
\begin{align*}
    e^{-\alpha f(X(t))}& =  \int_{0}^{t}\alpha \beta(s) e^{-\alpha f(X(s))}\nabla f(X(s))\cdot (X(s) -\bar{X}(s)) ds \\ & \;\;\;\;  - \sqrt{2} \int_{0}^{t}\alpha \sigma(s) e^{-\alpha f(X(s))}\nabla f(X(s))\cdot \big(\diag(X(s) -\bar{X}(s)) dW(s)\big)  
    \\ & \;\;\;\;+ \int_{0}^{t}\sigma^{2}(s)e^{-\alpha f(X(s))}\sum\limits_{j = 1}^{d}\bigg( \big(X(s) - \bar{X}(s)\big)^{2}_{j}  \Big(\alpha^{2} \Big(\frac{\partial f(X(s))}{\partial x_{j}}\Big)^{2} - \alpha\frac{\partial^{2}f(X(s))}{\partial x_{j}^{2}}\Big)\bigg)ds \\ & \;\;\;\;+ \int_{0}^{t}\int_{\mathbb{R}^{d}}\Big(e^{-\alpha f(X(s^{-}) + \gamma(s)\diag(X(s^{-}) - \bar{X}(s^{-}))z)} - e^{-\alpha f(X(s^{-}))}\Big) \mathcal{N}(ds,dz).
\end{align*}
Taking expectation on both sides and writing in the differential form  yield
\begin{align*}
    d\mathbb{E}e^{-\alpha f(X(t))} & = \alpha \beta(t)\mathbb{E}\big(e^{-\alpha f(X(t^{}))}(\nabla f(X(t^{})) -\nabla f(\bar{X}(t^{})))\cdot (X(t^{}) - \bar{X}(t^{}))\big) dt \\ 
    & +\sigma^{2}(t)\mathbb{E}\Bigg(e^{-\alpha f(X(t^{}))}\sum\limits_{j = 1}^{d}\bigg( \big(X(t^{}) - \bar{X}(t^{})\big)^{2}_{j}  \Big(\alpha^{2} \Big(\frac{\partial f(X(t^{}))}{\partial x_{j}}\Big)^{2} - \alpha\frac{\partial^{2}f(X(t^{}))}{\partial x_{j}^{2}}\Big)\bigg)\Bigg)dt \\ 
    & +\lambda \int_{\mathbb{R}^{d}}\mathbb{E}\Big(e^{-\alpha f(X(t^{}) + \gamma(t)\diag(X(t^{}) - \bar{X}(t^{}))z)} - e^{-\alpha f(X(t^{}))}\Big) \rho_{z}(z)dz dt,
\end{align*}
where we have used the fact $ \mathbb{E}\big[e^{-\alpha f(X(t))}(\nabla f(\bar{X}(t))\cdot (X(t) -\bar{X}(t)))\big] = 0$. 

Note that $|e^{-\alpha f(x)} - e^{-\alpha f(y)}| \leq \alpha e^{-\alpha f_{m}}|f(x) - f(y)|  $ which means $e^{-\alpha f(x)} - e^{-\alpha f(y)} \geq -\alpha e^{-\alpha f_{m}} |f(x) -f(y)| $. Using Assumption~\ref{cbohas4.1}, we get
\begin{align*}
    d\mathbb{E}e^{-\alpha f(X(t))} \geq - \alpha e^{-\alpha f_{m}}\big(\beta(t)K_{1} + \sigma^{2}(t)K_{2} + \lambda \gamma^{2}(t)K_{3}\mathbb{E}|\Zstroke|^{2}\big)\mathbb{E}|X(t^{}) - \bar{X}(t^{})|^{2}.
\end{align*}
From (\ref{cbo_eq_4.2}) and (\ref{cboeq4.2}), we have
\begin{align*}
    \mathbb{E}|X(t) - \bar{X}(t)|^{2} \leq \var(t^{}) + \frac{e^{-\alpha f_{m}}}{\M(t^{})}\var(t^{}) \leq 2 \frac{e^{-\alpha f_{m}}}{\M(t^{})}\var(t^{}).
\end{align*}
This implies 
\begin{align*}
    d \M(t) \geq -2 \alpha e^{-\alpha f_{m}}\big(\beta(t)K_{1} + \sigma^{2}(t)K_{2} + \lambda \gamma^{2}(t) K_{3}\mathbb{E}|\Zstroke|^{2}\big) \frac{e^{-\alpha f_{m}}}{\M(t^{})}\var(t^{}) dt,
\end{align*}
which is  what we aimed to prove in this lemma.
\end{proof}
Our next objective is to show that $\mathbb{E}(X(t))$ converges to $x^{*}$ as $t \rightarrow \infty$, where $x^{*}$ is close to $x_{\min}$, i.e. the point at which  $f(x)$ attains its minimum value, $f_{m}$. Applying Laplace's method (see e.g. \cite[Chap. 3]{cbo38} and also \cite{cbo1,cbo2}), we can calculate the following asymptotics: for any compactly supported probability measure $\rho \in \mathcal{P}(\mathbb{R}^{d})$ with $x_{\min} \in \text{supp}(\rho)$, we have
\begin{align}
    \lim\limits_{\alpha \rightarrow \infty}\Bigg(-\frac{1}{\alpha}\log\bigg(\int_{\mathbb{R}^{d}}e^{-\alpha f(x)}d\rho(x)\bigg)\Bigg) = f_{m} > 0. \label{cbo_neweq_4.6}
\end{align}
Based on the above asymptotics, we aim to prove that
\begin{align*}
    f(x^{*}) \leq f_{m} + \Gamma(\alpha) +  \mathcal{O}\bigg(\frac{1}{\alpha}\bigg),
\end{align*}
where a function $\Gamma(\alpha) \rightarrow 0 $ as $ \alpha \rightarrow \infty$.


 We introduce the following function: 
\begin{align*}
    \chi(t) = 2\beta(t) - \big(2\sigma^{2}(t) +\lambda\gamma^{2}(t)\mathbb{E}|\Zstroke|^{2}\big)\Big( 1+ \frac{2e^{-\alpha f_{m}}}{M(0)} \Big).
\end{align*}
We choose $\alpha$, $\beta(t)$, $\sigma(t)$, $\gamma(t)$, $\lambda$, distribution of $\Zstroke$ 
such that
\begin{itemize}
\item[(i)] $\chi(t)$ is a continuous function of time $t$, 
\item[(ii)] $\chi(t) > 0$ for all $t \geq 0$, and
\item[(iii)] $ \chi(t) $ attains its minimum which we denote as $\chi_{\min}$.
\end{itemize}

We also introduce
\begin{align*}
    \eta &:= 4\alpha e^{-\alpha f_{m}}\var(0)\frac{K_{1} \beta + K_{2}\sigma^{2}(0)  + K_{3}\lambda  \gamma^{2}(0)\mathbb{E}|\Zstroke|^{2}}{ \M^{2}(0)\chi_{\min}},
\end{align*}
where $\beta $ is introduced in Section~\ref{sec_our_mod}, and $K_{1}$, $K_{2}$ and $K_{3}$ are from Assumption~\ref{cbohas4.1}.
 
The next theorem is the main result of this section.  We will be assuming that $\eta \leq 3/4$ which can always be achieved by choosing sufficiently small $\var(0)$.

\begin{theorem}\label{cbo_thrm_4.3}
Let Assumptions~\ref{cboh3.1}, \ref{cboh3.2}-\ref{cboasm1.4} and \ref{cbohas4.1} hold. Let us also assume that $\mathcal{L}_{X(0)}$ is compactly supported and $x_{\min} \in \text{supp}(\mathcal{L}_{X(0)})$. If $\eta \leq 3/4$, then  $\var(t)$ exponentially decays to zero as $t \rightarrow \infty$. Further, there exists an $x^{*} \in \mathbb{R}^{d}$ such that $X(t) \rightarrow x^{*}$ a.s., $\mathbb{E}(X(t)) \rightarrow x^{*}$, $\bar{X}(t) \rightarrow x^{*}$  as $ t \rightarrow \infty$ and the following inequality holds:
\begin{align*}
    f(x^{*}) \leq f_{m}  + \Gamma(\alpha) + \frac{\log{2}}{\alpha}, 
\end{align*}
where  function $\Gamma(\alpha) \rightarrow 0 $ as $ \alpha \rightarrow \infty$.
\end{theorem}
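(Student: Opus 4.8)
The plan is to run a continuity (bootstrap) argument that couples the two differential inequalities of Lemmas~\ref{cbo_prop_4.1} and~\ref{cbo_lem_4.2}, so as to establish simultaneously that $M(t)$ stays bounded below and that $\var(t)$ decays exponentially. The key point is a circular dependence: Lemma~\ref{cbo_prop_4.1} yields exponential decay of $\var(t)$ only once we know $M(t)$ is bounded away from zero, while Lemma~\ref{cbo_lem_4.2} controls the decrease of $M(t)$ only once we know $\var(t)$ is small. To break this I would set $t^{*} := \inf\{t \geq 0 : M(t) < M(0)/2\}$ and argue $t^{*} = \infty$. On $[0,t^{*})$ we have $M(t) \geq M(0)/2$, hence $e^{-\alpha f_{m}}/M(t) \leq 2e^{-\alpha f_{m}}/M(0)$, so Lemma~\ref{cbo_prop_4.1} gives $\frac{d}{dt}\var(t) \leq -\chi(t)\var(t)$ and therefore $\var(t) \leq \var(0)e^{-\chi_{\min} t}$.

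Feeding this bound into Lemma~\ref{cbo_lem_4.2} and using the monotonicity of the parameters ($\beta(s) \leq \beta$, $\sigma(s) \leq \sigma(0)$, $\gamma(s) \leq \gamma(0)$) together with $\int_{0}^{t}\var(s)\,ds \leq \var(0)/\chi_{\min}$, I would integrate to obtain $M^{2}(t) \geq M^{2}(0) - \eta M^{2}(0) = (1-\eta)M^{2}(0) \geq M^{2}(0)/4$, where the last step uses $\eta \leq 3/4$. This forces $M(t) \geq M(0)/2$ throughout $[0,t^{*})$, so by continuity of $M$ the inequality persists at $t^{*}$, contradicting the definition of $t^{*}$ unless $t^{*}=\infty$. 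Consequently $\var(t) \leq \var(0)e^{-\chi_{\min}t}$ holds for all $t \geq 0$, which is the asserted exponential decay.

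Next I would establish convergence of the mean. Taking expectations in~(\ref{cbomfsdep}) annihilates the Wiener and compensated-jump terms (the latter because $\mathbb{E}(Z)=0$), giving $\frac{d}{dt}\mathbb{E}X(t) = -\beta(t)(\mathbb{E}X(t) - \bar{X}(t))$. Bounding $|\mathbb{E}X(t) - \bar{X}(t)|$ via~(\ref{cboeq4.2}) and the lower bound on $M$ yields $\big|\frac{d}{dt}\mathbb{E}X(t)\big| \leq C\,\var(t)^{1/2} \leq C e^{-\chi_{\min}t/2}$, which is integrable; hence $\mathbb{E}X(t)$ is Cauchy and converges to some $x^{*}\in\mathbb{R}^{d}$. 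Combining $\var(t)\to 0$ with $\mathbb{E}X(t)\to x^{*}$ gives $\mathbb{E}|X(t)-x^{*}|^{2}\to 0$ exponentially, and $|\bar{X}(t)-\mathbb{E}X(t)|\to 0$ then gives $\bar{X}(t)\to x^{*}$. For the almost-sure statement I would show the paths are a.s.\ Cauchy by writing $X(t)-X(s)$ as a drift plus martingale increments over $[s,t]$ and applying the Burkholder--Davis--Gundy inequality; the drift and quadratic variation are controlled by $\int_{s}^{\infty} \var(u)\,du \to 0$, so $\mathbb{E}\sup_{t\geq s}|X(t)-X(s)|^{2}\to 0$ as $s\to\infty$, which upgrades the $L^{2}$ limit to an almost-sure one.

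Finally, the quantitative bound follows from the Laplace principle. Since $e^{-\alpha f}$ is bounded and $X(t)\to x^{*}$, bounded convergence gives $M(t)\to e^{-\alpha f(x^{*})}$; combined with $M(t)\geq M(0)/2$ this yields $e^{-\alpha f(x^{*})} \geq M(0)/2$, i.e.\ $f(x^{*}) \leq -\frac{1}{\alpha}\log M(0) + \frac{\log 2}{\alpha}$. Setting $\Gamma(\alpha) := -\frac{1}{\alpha}\log\big(\int_{\mathbb{R}^{d}}e^{-\alpha f(x)}\mathcal{L}_{X(0)}(dx)\big) - f_{m}$, the asymptotic~(\ref{cbo_neweq_4.6})---valid because $\mathcal{L}_{X(0)}$ is compactly supported with $x_{\min}\in\mathrm{supp}(\mathcal{L}_{X(0)})$---guarantees $\Gamma(\alpha)\to 0$ as $\alpha\to\infty$, and we obtain $f(x^{*}) \leq f_{m} + \Gamma(\alpha) + \frac{\log 2}{\alpha}$. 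I expect the main obstacle to be the bootstrap step: making the coupling of the two inequalities rigorous---in particular using $\var(t)\leq\var(0)e^{-\chi_{\min}t}$ self-consistently on $[0,t^{*})$ and justifying the continuity of $M$ needed to close the contradiction---and, to a lesser extent, upgrading $L^{2}$ convergence to almost-sure convergence of the c\`adl\`ag paths in the presence of jumps.
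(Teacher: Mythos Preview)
Your proposal is correct and matches the paper's proof almost step for step: the same bootstrap via $t^{*}=\inf\{t:M(t)<M(0)/2\}$ (the paper writes it as a $\sup$), the same use of Lemmas~\ref{cbo_prop_4.1}--\ref{cbo_lem_4.2} with the monotonicity of $\beta,\sigma,\gamma$ to get $M^{2}(t)\geq(1-\eta)M^{2}(0)\geq M^{2}(0)/4$, the same ODE for $\mathbb{E}X(t)$, and the same Laplace-principle conclusion. The only deviation is the almost-sure convergence step: the paper avoids your BDG/Cauchy-path argument and instead uses Chebyshev plus Borel--Cantelli, bounding $\mathbb{P}(|X(t)-\mathbb{E}X(t)|\geq e^{-\ell t})\leq Ce^{-(\chi_{\min}-2\ell)t}$ for $\ell<\chi_{\min}/2$ and summing along a discrete time sequence, which is shorter and sidesteps the pathwise estimates you flagged as a potential obstacle.
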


\begin{proof}
Let
$
    T^{*} = \sup\big\{ t \;;\; \M(s) > \frac{\M(0)}{2}, \text{for all}\; s \in [0,t]\big\}.
$
Observe that $T^{*} > 0$ by definition.

Let us assume that $T^{*} < \infty$.  We can deduce that the following holds by definition of $T^{*}$ for all $t\in [0,T^{*}]$:
\begin{align*}
   2\beta(t) - \big(2\sigma^{2}(t) +\lambda\gamma^{2}(t)\mathbb{E}|\Zstroke|^{2}\big)\Big( 1+ \frac{e^{-\alpha f_{m}}}{M(t^{})} \Big) \geq   2\beta(t) - \big(2\sigma^{2}(t) +\lambda\gamma^{2}(t)\mathbb{E}|\Zstroke|^{2}\big)\Big( 1+ \frac{2e^{-\alpha f_{m}}}{M(0)} \Big) = \chi(t),
\end{align*}
where the left hand side of the above inequality is from (\ref{h4.1}). Using Lemma~\ref{cbo_prop_4.1}, the fact that $\chi(t)$ is continuous and $\chi(t) > 0 $ for all $t \geq 0$, we get for all $t \in [0, T^{*}]$:
\begin{align*}
    \var(t) \leq \var(0)e^{-\chi(t)t} \leq \var(0)e^{-\chi_{\min}t}. 
\end{align*}
We have from Lemma~\ref{cbo_lem_4.2} for all $t \in (0,T^{*}]$:
\begin{align*}
\M^{2}(t) &\geq  \M^{2}(0) - 4\alpha e^{-\alpha f_{m}}\int_{0}^{t} \big(K_{1} \beta(s) + K_{2}\sigma^{2}(s) + K_{3}\lambda \gamma^{2}(s) \mathbb{E}|\Zstroke|^{2}\big)\var(s)ds \\ 
& \geq \M^{2}(0) - 4\alpha e^{-\alpha f_{m}}\big(K_{1} \beta + K_{2}\sigma^{2}(0) + K_{3}\lambda\gamma^{2}(0)\mathbb{E}|\Zstroke|^{2}\big)\var(0)  \int_{0}^{t}e^{-\chi_{\min}s}ds 
\\ 
& =  \M^{2}(0) - 4\alpha e^{-\alpha f_{m}}\big(K_{1} \beta + K_{2}\sigma^{2}(0) + K_{3}\lambda\gamma^{2}(0)\mathbb{E}|\Zstroke|^{2}\big)\frac{\var(0)}{\chi_{\min}}\big(1 - e^{-\chi_{\min}t}\big)\\ 
& > \M^{2}(0) - 4\alpha e^{-\alpha f_{m}}\big(K_{1} \beta + K_{2}\sigma^{2}(0) + K_{3}\lambda\gamma^{2}(0)\mathbb{E}|\Zstroke|^{2}\big)\frac{\var(0)}{\chi_{\min}} \geq  \frac{\M^{2}(0)}{4},  
\end{align*}
where in the last step we have used the fact that $\eta \leq 3/4$.
This shows $\M(t) > \M(0)/2$ which implies $\M(t) -  \M(0)/2 > 0$ on the set $(0,T^{*}]$. Also, note that $M(t)$ is continuous in $t$, therefore there exists an $\epsilon > 0$ such that $\M(t) > \M(0)/2$ for all $t \in [T^{*},T^{*}+\epsilon)$. This creates a contradiction which implies $T^{*} = \infty$.  Hence, 
\begin{equation}
    \var(t) \leq \var(0) e^{- \chi_{\min}t}\;\; \text{and}\;\;    \M(t) > \M(0)/2 \; \text{ for all}\; t > 0. \label{cbo_neweq_4.7}
\end{equation}
This implies $\var(t)$ exponentially decays to zero as $t \rightarrow \infty$.
 From (\ref{cboeq4.2}) and (\ref{cbo_neweq_4.7}), we get
\begin{align} \label{cbo_eq_4.7}
    |\mathbb{E}X(t) - \bar{X}(t)|^{2} \leq e^{-\alpha f_{m}} \frac{\var(t)}{\M(t)} \leq Ce^{-\chi_{\min} t},\;\;\;\; t > 0,
\end{align}
where $C$ is a positive constant independent of $t$.

Taking expectation on both sides of (\ref{cbomfsdep}) (recall that $\mathbb{E}\Zstroke = 0$), applying Holder's inequality and using (\ref{cbo_eq_4.2}) gives
\begin{align}
    \bigg| \frac{d}{dt}\mathbb{E}X(t)\bigg| &\leq \beta \mathbb{E}|X(t^{})- \bar{X}(t^{})| \leq \beta (\mathbb{E}|X(t^{}) - \bar{X}(t)|^{2})^{1/2 } \leq  \beta \big(\var(t) + |\mathbb{E}X(t^{}) - \bar{X}(t^{})|^{2}\big)^{1/2} \nonumber \\  &  \leq Ce^{-\chi_{\min}t/2},\;\;\;\; t > 0, \label{cbo_eq_4.8}
\end{align}
where $C$ is a positive constant independent of $t$.

 It is clear from (\ref{cbo_eq_4.8}) that there exists an $x^{*} \in \mathbb{R}^{d}$ such that $ \mathbb{E}(X(t)) \rightarrow x^{*}$ as $t \rightarrow \infty$. Further,  $\bar{X}(t) \rightarrow x^{*}$  as $ t \rightarrow \infty$ due to (\ref{cbo_eq_4.7}).

Let $\ell > 0$. Using Chebyshev's inequality, we have
\begin{align*}
    \mathbb{P}(|X(t) - \mathbb{E}X(t)| \geq e^{-\ell t}) \leq \frac{\var{(t)}}{e^{-2\ell t}} \leq Ce^{-(\chi_{\min} - 2\ell )t},
\end{align*}
where $C>0$ is independent of $t$. 
 If we choose $\ell < \chi_{\min}/2$, then we can say $|X(t) - \mathbb{E}X(t)| \rightarrow 0$ as $t \rightarrow 0$ a.s. due to the Borel-Cantelli lemma. This  implies $X(t) \rightarrow x^{*}$ a.s. Application of the bounded convergence theorem gives the convergence result: $\mathbb{E}e^{-\alpha f(X(t))} \rightarrow e^{-\alpha f(x^{*})} $ as $t \rightarrow \infty$.  
Then, due to (\ref{cbo_neweq_4.7}), we obtain 
\begin{align*}
    e^{-2\alpha f(x^{*})} \geq  M^{2}(0)/4
\end{align*}
and hence
\begin{align*}
    f(x^{*}) \leq  - \frac{1}{\alpha}\log(\M(0)) + \frac{1}{\alpha}\log{2}.
\end{align*}
Then, using the asymptotics (\ref{cbo_neweq_4.6}), we get
\begin{align} \label{cbo_eqn_4.9}
    f(x^{*}) \leq f_{m} + \Gamma(\alpha) + \frac{1}{\alpha}\log{2}, 
\end{align}
where the function $\Gamma(\alpha) \rightarrow 0 $ as $ \alpha \rightarrow \infty$.
\end{proof}
\subsection{Convergence to the mean-field SDEs}\label{cbo_sec_mf}
In the previous section, we showed convergence of the non-linear process $X(t)$ from (\ref{cbomfsdep}) towards the global minimizer. However, the CBO method is based on the system (\ref{cbos1.6}) of finite particles. This means there is a missing link in the theoretical analysis which we fill in this section by showing convergence of the particle system (\ref{cbos1.6}) to the mean-field limit in mean-square sense (\ref{cbomfsdep}) as the number of particles tends to infinity. The proof of this result has some ingredients inspired from \cite{cbo36} (see also \cite{cbo37}), precisely where we partition the sample space (cf. Theorem~\ref{cbo_thrm4.5}). Further, it is clear from the proof that we need stronger moment bound result like in Lemmas~\ref{cbolemma3.3} and \ref{cbolem3.6}, as compared to \cite[Lemma 3.4]{cbo2}.

We first discuss some concepts necessary for later use in this section. We introduce the following notation for the empirical measure of i.i.d. particles driven by the McKean-Vlasov SDEs (\ref{cbomfsdep}):
\begin{align}
    \mathcal{E}_{t} : = \frac{1}{N}\sum\limits_{i=1}^{N}\delta_{X^{i}(t)},
\end{align}
where $\delta_{x}$ is the Dirac measure at $x \in \mathbb{R}^{d}$.
We will also need the following notation:
\begin{align}\label{cboeq5.2}
    \bar{X}^{\mathcal{E}_{t}}(t) = \frac{\int_{\mathbb{R}^{d}}x e^{-\alpha f(x)} \mathcal{E}_{t}(dx)}{\int_{\mathbb{R}^{d}} e^{-\alpha f(x)} \mathcal{E}_{t}(dx)} = \frac{\sum_{i=1}^{N}X^{i}(t)e^{-\alpha f(X^{i}(t))}}{\sum_{i=1}^{N}e^{-\alpha f(X^{i}(t))}}.
\end{align}
 
Using discrete Jensen's inequality, we have
\begin{align*}
    \exp{\bigg(-\alpha\frac{1}{N}\sum\limits_{i=1}^{N}f(X^{i}(t)) \bigg)} &\leq  \frac{1}{N}\sum\limits_{i=1}^{N}\exp{\Big(-\alpha f(X^{i}(t))\Big)},
\end{align*}
which, on rearrangement and multiplying both sides by $e^{-\alpha f_{m}}$, gives
\begin{align}\label{y4.5}
    \frac{e^{-\alpha f_{m}}}{\frac{1}{N}\sum_{i=1}^{N}e^{-\alpha f(X^{i}(t))}} &\leq \exp{\bigg(\alpha\Big(\frac{1}{N}\sum\limits_{i=1}^{N}f(X^{i}(t)) - f_{m}\Big)\bigg)}
    \leq e^{\alpha K_{u}} \exp{\Big(\frac{\alpha K_{u}}{N}\sum\limits_{i=1}^{N} |X^{i}(t)|^{2}\Big)},
\end{align}
where we have used Assumption~\ref{cboassu3.4} for the second inequality. 

We recall that a  random variable $ \zeta(\omega)$ is a.s. finite if there is an increasing sequence $\{e_{k}\}_{k\in \mathbb{N}}$ with $e_{k}\rightarrow \infty$ as $k \rightarrow \infty$ such that
\begin{align*}
    \mathbb{P}\big(\cup_{k=1}^{\infty}\{\omega \; : \; |\zeta(\omega)|  < e_{k} \}\big) = 1, 
\end{align*}
which means
\begin{align*}
    \mathbb{P}\big(\cap_{k=1}^{\infty}\{\omega \; : \; |\zeta(\omega)|  \geq  e_{k} \}\big) = 0, \;\;\;\;\text{i.e.}\;\;\;\;\;\; \mathbb{P}\big(\lim_{k\rightarrow \infty}\{ \omega\; :    \; |\zeta(\omega)| \geq e_{k}\} \big) =0.    
\end{align*}
Let $g(x)$ be an increasing continuous function of $x \in \mathbb{R}$ then $g(\zeta(\omega))$ is a.s. finite random variable as well. Also, if $\zeta_{1}(\omega)$ and $\zeta_{2}(\omega)$ are a.s. finite random variables then $\zeta_{1}(\omega) \vee \zeta_{2}(\omega)$ is also an a.s. finite random variable.
If $\zeta(\omega)$ is a.s. finite then by continuity of probability we have \cite{cbo35}:
\begin{align}
    \lim_{k\rightarrow \infty}\mathbb{P}(\{ \omega \; : \; |\zeta(\omega)| \geq e_{k}\}) = 0.
\end{align}

We know that $X^{i}(t)$, governed by the McKean-Vlasov SDEs (\ref{cbomfsdep}), are i.i.d. random variables for every $t\geq 0$, therefore using Chebyshev's inequality, we get
\begin{align*}
    &\mathbb{P}\Big(\frac{1}{N}\sum\limits_{i=1}^{N}|X^{i}(t)|^{2} - \mathbb{E}|X(t)|^{2} \geq N^{(\epsilon-1)/4}\Big)  \leq \frac{\mathbb{E}\Big|\frac{1}{N}\sum_{i=1}^{N}|X^{i}(t)|^{2} - \mathbb{E}|X(t)|^{2}\Big|^{4}}{N^{(\epsilon-1)}} \\ & = 
    \frac{\mathbb{E}\Big|\sum_{i=1}^{N}\big(|X^{i}(t)|^{2} - \mathbb{E}|X(t)|^{2}\big)\Big|^{4}}{N^{3+\epsilon}} = \frac{\sum_{i =1}^{N}\mathbb{E}U_{i}^{4}}{N^{3+\epsilon}} + \frac{\sum_{i=1}^{N}\mathbb{E}U_{i}^{2}\sum_{j=1}^{N}\mathbb{E}U_{j}^{2}}{N^{3+\epsilon}}
        \\ & \leq \frac{C}{N^{1+\epsilon}},
\end{align*}
where we have used Lemma~\ref{cbolem3.6}, $U_{i} = |X^{i}(t)|^{2} - \mathbb{E}|X(t)|^{2}$ and $C$ is independent of $N$.
We take $\epsilon \in (0,1)$ and define $E_{N} = \left\{ \frac{1}{N}\sum\limits_{i =1}^{N}|X^{i}(t)|^{2} - \mathbb{E}|X(t)|^{2} > \frac{1}{N^{(1-\epsilon)/4}}\right\}$  then
\begin{align*}
\sum\limits_{N =1}^{\infty}\mathbb{P}(E_{N}) < \infty.    
\end{align*}
The Borel-Cantelli lemma implies that the random variable 
\begin{align*}
    \zeta_{1}(t) := \sup_{N\in \mathbb{N}}N^{(1-\epsilon)/4}\Big(\frac{1}{N}\sum_{i=1}^{N}|X^{i}(t)|^{2} - \mathbb{E}|X(t)|^{2}\Big)
\end{align*}
is a.s. finite. Therefore,

\begin{align} \label{y4.6}
    \frac{1}{N}\sum\limits_{i=1}^{N}|X^{i}(t)|^{2} \leq \mathbb{E}|X(t)|^{2} + \zeta_{1}(t,\omega)N^{(-1 +\epsilon)/4},\;\;\;\; a.s.,
\end{align}
 for all $t \in [0,T]$. Using (\ref{y4.6}) in (\ref{y4.5}) and Lemma~\ref{cbolem3.6}, we get
\begin{align}\label{cboeq5.6}
    \frac{e^{-\alpha f_{m}}}{\frac{1}{N}\sum\limits_{i=1}^{N}e^{-\alpha f(X^{i}(t))}} \leq e^{\alpha K_{u}(1+ K_{p}+\zeta_{1}(t,\omega)N^{(-1+\epsilon)/4})  },\;\;\;\; a.s.
\end{align}
This show that
\begin{align}
    \lim\limits_{N \rightarrow \infty}\frac{e^{-\alpha f_{m}}}{\frac{1}{N}\sum\limits_{i=1}^{N}e^{-\alpha f(X^{i}(t))}} \leq e^{\alpha K_{u}(1+ K_{p})},\;\;\;\; a.s.
\end{align}
\begin{lemma}\label{cbolem5.1}
Let Assumptions~\ref{cboh3.1}, \ref{cboh3.2}-\ref{cboasm1.4} be  satisfied. Let $\mathbb{E}|X(0)|^{4} < \infty$ and $\mathbb{E}|Z|^{4} < \infty$. Then, the following bound holds for all $t\in [0,T]$ and sufficiently large $N$:
\begin{align} \label{cboeq5.7}
    |\bar{X}^{\mathcal{E}_{t}}(t) - \bar{X}(t)| \leq \frac{\zeta(t,\omega)}{N^{(1-\epsilon)/4}}, \;\;\;\; a.s.,
\end{align}
where $\bar{X}^{\mathcal{E}_{t}}(t)$ is from (\ref{cboeq5.2}), $\bar{X}(t)$ is from (\ref{eqcbo2.12}), $\zeta(t,\omega) $ is an $a.s.$ finite $\mathscr{F}_{t}-$ measurable random variable and $ \epsilon \in( 0,1)$.
\end{lemma}
\begin{proof}
We have
\begin{align}
    |\bar{X}^{\mathcal{E}_{t}}(t) &- \bar{X}(t)| = \bigg| \sum_{i=1}^{N}X^{i}(t)\frac{e^{-\alpha f(X^{i}(t))}}{\sum_{j=1}^{N}e^{-\alpha f(X^{j}(t))} } - \int_{\mathbb{R}^{d}}x\frac{e^{-\alpha f(x)}}{\int_{\mathbb{R}^{d}}e^{-\alpha f(x)} \mathcal{L}_{X(t)}(dx)}\mathcal{L}_{X(t)}(dx)\bigg| \nonumber\\ 
    & \leq \bigg| \frac{1}{\sum_{j=1}^{N}e^{-\alpha f(X^{j}(t))}}\bigg( \sum_{i=1}^{N}X^{i}(t) e^{-\alpha f(X^{i}(t))} - \int_{\mathbb{R}^{d}}xe^{-\alpha f(x)}\mathcal{L}_{X(t)}(dx)\bigg)\bigg| \nonumber \\ & \;\;\;\; + 
    \bigg|\int_{\mathbb{R}^{d}}xe^{-\alpha f(x)}\mathcal{L}_{X(t)}(dx)\bigg(\frac{1}{\sum_{j=1}^{N}e^{-\alpha f(X^{j}(t))}} - \frac{1}{\int_{\mathbb{R}^{d}}e^{-\alpha f(x)}\mathcal{L}_{X(t)}(dx)}\bigg)\bigg|. \label{cbo_eq_4.13}
\end{align}
Let $ Y^{i}(t) = X^{i}(t) e^{-\alpha f(X^{i}(t))} - \int_{\mathbb{R}^{d}}xe^{-\alpha f(x)}\mathcal{L}_{X(t)}(dx)$. Note that $\mathbb{E}Y^{i}(t)$ is a $d-$dimensional zero vector and $\mathbb{E}(Y^{i}(t)\cdot Y^{j}(t)) = 0$, $i\neq j$. Then, using Theorem~\ref{cbolem3.6}, we obtain
\begin{align}\label{cboeq4.14}
    \mathbb{E}\Big|\sum_{i=1}^{N}X^{i}(t) e^{-\alpha f(X^{i}(t))} &- \int_{\mathbb{R}^{d}}xe^{-\alpha f(x)}\mathcal{L}_{X(t)}(dx)\Big|^{4} = \frac{1}{N^{4}}\mathbb{E}\Big|\sum\limits_{i=1}^{N}Y^{i}(t)\Big|^{4}\nonumber\\
    & = \frac{1}{N^{4}}\mathbb{E}\bigg(\sum\limits_{i=1}^{N}|Y^{i}(t)|^{4}+ \sum_{i=1}^{N}|Y^{i}(t)|^{2}\sum_{j=1}^{N}|Y^{j}(t)|^{2}\bigg) \leq \frac{C}{N^{2}},
\end{align}
where $C$ is a positive constant independent of $N$. 
As a consequence of above estimate and using Chebyshev's inequality, we get
\begin{align*}
    \mathbb{P}\bigg(\Big|\sum_{i=1}^{N}X^{i}(t) e^{-\alpha f(X^{i}(t))} &- \int_{\mathbb{R}^{d}}X(t)e^{-\alpha f(X(t))}\mathcal{L}_{X(t)}(dx)\Big| \geq N^{(\epsilon-1)/4}\bigg) \leq \frac{C}{N^{1+\epsilon}}.
\end{align*}
Therefore, by the Borel-Cantelli lemma there exists an  a.s. finite $\mathcal{F}_{t}$-measurable random variable $\zeta_{2}(t,\omega)$ such that the following bound holds: 
\begin{align}\label{cboeq5.8}
    \Big|\sum_{i=1}^{N}X^{i}(t) e^{-\alpha f(X^{i}(t))} &- \int_{\mathbb{R}^{d}}X(t)e^{-\alpha f(X(t))}\mathcal{L}_{X(t)}(dx)\Big| \leq \frac{\zeta_{2}(t,\omega)}{N^{(1-\epsilon)/4}},\;\;\;\; a.s.
\end{align}
In the same manner, we can ascertain
\begin{align}\label{cboeq5.9}
    \Big|\sum_{i=1}^{N} e^{-\alpha f(X^{i}(t))} &- \int_{\mathbb{R}^{d}}e^{-\alpha f(X(t))}\mathcal{L}_{X(t)}(dx)\Big| \leq \frac{\zeta_{3}(t,\omega)}{N^{(1-\epsilon)/4}},\;\;\;\; a.s.,
\end{align}
where $\zeta_{3}(t,\omega)$ is an a.s. finite $\mathcal{F}_{t}$-measurable random variable. Substituting (\ref{cboeq5.6}), (\ref{cboeq5.8}) and (\ref{cboeq5.9}) in (\ref{cbo_eq_4.13}), we conclude that (\ref{cboeq5.7}) is true for sufficiently large $N$.
\end{proof}
\begin{remark}
From (\ref{y4.6}), we have $\lim_{N\rightarrow \infty}\int_{\mathbb{R}^{d}}|x|^{2}\mathcal{E}_{t}(dx) = \mathbb{E}|X(t)|^{2}$, $a.s.$, which is the strong law of large numbers for i.i.d. random variables $|X^{i}(t)|^{2}$. Also, the result of Lemma~\ref{cbolem5.1} can be treated as a law of large numbers which shows a.s. convergence of weighted average $\bar{X}^{\mathcal{E}_{t}}(t)$ (as compared to empirical average of (\ref{y4.6})) of i.i.d. particle system towards $\bar{X}(t)$ as $N \rightarrow \infty$. 
\end{remark}

Let $R>0$ be a sufficiently large real number. Let us fix a $t \in [0,T]$. Let us denote 
\begin{align}
    \tau_{1,R} = \inf\Big\{ s\geq 0\; ; \; \frac{1}{N}\sum\limits_{i=1}^{N}|X^{i}_{N}(s)|^{4}  \geq R \Big\},&\;\;\;\;      \tau_{2,R} = \inf\Big\{ s \geq 0\; ; \; \frac{1}{N}\sum\limits_{i=1}^{N}|X^{i}(s)|^{4} \geq R\Big\}, \\
    \tau_{R} & = \tau_{1,R}\wedge \tau_{2,R}, \label{cbo_neweq_4.23}
\end{align}
and 
\begin{align}
    \Omega_{1}(t) &= \{ \tau_{1,R} \leq t\} \cup \{ \tau_{2,R} \leq t \}, \label{cbo_eq_4.20}\\   \Omega_{2}(t) &= \Omega\backslash\Omega_{1}(t) = \{\tau_{1,R} > t\} \cap  \{ \tau_{2,R} >  t \}. \label{cbo_eq_4.21}
\end{align}

\begin{lemma}
Let Assumptions~\ref{cboh3.1}, \ref{cboh3.2}-\ref{cboasm1.4} be satisfied. Then, the following inequality holds for all $t \in [0,T]$:
\begin{align}
        \mathbb{E}\int_{0}^{t\wedge \tau_{R}} |\bar{X}_{N}(s) &- \bar{X}^{\mathcal{E}_{s}}(s)|^{2} ds 
        \leq CRe^{4\alpha K_{u}\sqrt{R}}\int_{0}^{t}\frac{1}{N}\sum\limits_{i=1}^{N}\mathbb{E}|X^{i}_{N}(s\wedge \tau_{R}) - X^{i}(s\wedge \tau_{R})|^{2}ds, \label{cbo_eq_4.23}
\end{align}
where $\tau_{R}$ is from (\ref{cbo_neweq_4.23}), $\bar{X}_{N}(s)$ is from (\ref{cbos1.7}), $\bar{X}^{\mathcal{E}_{s}}(s)$ is from (\ref{cboeq5.2}), $C > 0$ is independent of $N$ and $R$.
\end{lemma}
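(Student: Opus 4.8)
The plan is to observe that $\bar{X}_N(s)$ from (\ref{cbos1.7}) and $\bar{X}^{\mathcal{E}_s}(s)$ from (\ref{cboeq5.2}) are the \emph{same} weighted-average functional $\mathbf{x}\mapsto \big(\sum_i x^i e^{-\alpha f(x^i)}\big)/\big(\sum_i e^{-\alpha f(x^i)}\big)$ evaluated at the two configurations $(X^1_N(s),\dots,X^N_N(s))$ and $(X^1(s),\dots,X^N(s))$. Writing $w_i = e^{-\alpha f(X^i_N(s))}$ and $\tilde w_i = e^{-\alpha f(X^i(s))}$, I would split
\[
\bar X_N(s) - \bar X^{\mathcal{E}_s}(s) = \frac{\sum_i (X^i_N - X^i)w_i}{\sum_i w_i} + \frac{\sum_i X^i(w_i - \tilde w_i)}{\sum_i w_i} + \Big(\sum_i X^i\tilde w_i\Big)\Big(\frac{1}{\sum_i w_i} - \frac{1}{\sum_i \tilde w_i}\Big) =: T_1 + T_2 + T_3,
\]
and bound $|\bar X_N(s) - \bar X^{\mathcal{E}_s}(s)|^2 \le 3(|T_1|^2 + |T_2|^2 + |T_3|^2)$ termwise. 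This mirrors the decomposition already used in the proof of Lemma~\ref{cbolemma3.1}, but now the estimates must be tracked against the stopping time $\tau_R$ from (\ref{cbo_neweq_4.23}) rather than against a fixed ball.

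The workhorse is a lower bound on the normalised weight sums. On the event $\{s<\tau_R\}=\Omega_2(s)$ (see (\ref{cbo_eq_4.21})) one has both $\frac1N\sum_i|X^i_N(s)|^4 < R$ and $\frac1N\sum_i|X^i(s)|^4 < R$, whence by Jensen's inequality $\frac1N\sum_i|X^i_N(s)|^2 \le \sqrt R$ and likewise for $X^i(s)$. Feeding this into (\ref{y4.5}) (whose derivation is purely configurational and hence applies verbatim to the positions $X^i_N(s)$) yields
\[
\frac{1}{\frac1N\sum_i w_i} \le e^{\alpha f_m}e^{\alpha K_u(1+\sqrt R)}, \qquad \frac{1}{\frac1N\sum_i \tilde w_i} \le e^{\alpha f_m}e^{\alpha K_u(1+\sqrt R)}.
\]
This is the sole source of the exponential factor $e^{\alpha K_u\sqrt R}$.

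With these denominator bounds in hand, I would estimate each $T_j$ by pulling out the reciprocal weight sums, using $w_i,\tilde w_i \le e^{-\alpha f_m}$, the growth-modulated Lipschitz bound of Assumption~\ref{cboh3.2} in the form $|w_i - \tilde w_i| \le \alpha K_f e^{-\alpha f_m}(1+|X^i_N|+|X^i|)|X^i_N - X^i|$, and the discrete Cauchy--Bunyakowsky--Schwartz inequality. The fourth-moment control bounds every weighted sum that appears, e.g. $\frac1N\sum_i|X^i|^2(1+|X^i_N|+|X^i|)^2 \le CR$ (using $\frac1N\sum_i|X^i|^2|X^i_N|^2\le (\frac1N\sum_i|X^i|^4)^{1/2}(\frac1N\sum_i|X^i_N|^4)^{1/2}<R$), which is where the polynomial factor $R$ enters. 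The decisive term is $T_3$, which carries the \emph{product} of the two reciprocal weight sums; after squaring it contributes $|T_3|^2 \le CR\,e^{4\alpha K_u\sqrt R}\,\frac1N\sum_i|X^i_N-X^i|^2$, whereas $T_1$ and $T_2$ only produce $e^{2\alpha K_u\sqrt R}$ and are therefore absorbed. Collecting the three, for a.e. $s \le t\wedge\tau_R$,
\[
|\bar X_N(s) - \bar X^{\mathcal{E}_s}(s)|^2 \le CR\,e^{4\alpha K_u\sqrt R}\,\frac1N\sum_{i=1}^N|X^i_N(s)-X^i(s)|^2 \quad\text{a.s.}
\]

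Finally I would integrate over $[0,t\wedge\tau_R]$ and take expectation. Since $s<\tau_R$ forces $s\wedge\tau_R = s$, one has $\mathbf 1_{s<\tau_R}|X^i_N(s)-X^i(s)|^2 \le |X^i_N(s\wedge\tau_R)-X^i(s\wedge\tau_R)|^2$, so that $\int_0^{t\wedge\tau_R}\frac1N\sum_i|X^i_N(s)-X^i(s)|^2\,ds \le \int_0^t\frac1N\sum_i|X^i_N(s\wedge\tau_R)-X^i(s\wedge\tau_R)|^2\,ds$; Fubini's theorem (all integrands being nonnegative) then moves the expectation inside, and absorbing the constant $e^{4\alpha K_u}$ into $C$ gives exactly (\ref{cbo_eq_4.23}). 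The main obstacle is the bookkeeping in the middle paragraphs: keeping the dependence on $R$ sharp, in particular verifying that the \emph{averaged} fourth-moment stopping time (rather than an $L^\infty$ ball, as in Lemma~\ref{cbolemma3.1}) is strong enough to control every weighted sum, and confirming that only $T_3$ carries the double reciprocal so that the exponent $e^{4\alpha K_u\sqrt R}$ is attained but not overshot.
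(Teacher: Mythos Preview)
Your proposal is correct and follows essentially the same route as the paper's proof: the same three-term splitting of $\bar X_N(s)-\bar X^{\mathcal{E}_s}(s)$, the same Jensen-based lower bound (\ref{y4.5}) on the normalising sums combined with the stopping time to produce the factor $e^{\alpha K_u\sqrt R}$, the same use of Assumption~\ref{cboh3.2} on $|w_i-\tilde w_i|$, and the same Cauchy--Bunyakowsky--Schwartz step controlling the polynomial factor by $R$. Your observation that only the third term carries the \emph{product} of two reciprocal weight sums (hence the exponent $4$ rather than $2$) is a slightly sharper bookkeeping than the paper spells out, but the paper simply bounds all three terms by the worst case and arrives at the identical conclusion.
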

\begin{proof}
    We have
    \begin{align*}
         &|\bar{X}_{N}(s) - \bar{X}^{\mathcal{E}_{s}}(s)| = \bigg|\sum\limits_{i=1}^{N}X^{i}_{N}(s) \frac{e^{-\alpha f(X^{i}_{N}(s))}}{\sum_{j =1}^{N}e^{-\alpha f(X_{N}^{j}(s))}} - \sum\limits_{i=1}^{N}X^{i}(s) \frac{e^{-\alpha f(X^{i}(s))}}{\sum_{j =1}^{N}e^{-\alpha f(X^{j}(s))}}\bigg|\\ & \leq \Bigg|\frac{1}{N}\sum\limits_{i=1}^{N}\big(X_{N}^{i}(s) - X^{i}(s)\big)\frac{e^{-\alpha f(X_{N}^{i}(s))}}{\frac{1}{N}\sum_{j=1}^{N}e^{-\alpha f(X^{j}_{N}(s))}}\Bigg| +  \Bigg|\frac{\frac{1}{N}\sum_{i=1}^{N}X^{i}(s)\big(e^{-\alpha f(X_{N}^{i}(s))} - e^{-\alpha f(X^{i}(s))}\big)}{\frac{1}{N}\sum_{j=1}^{N}e^{-\alpha f(X^{j}_{N}(s))}}\Bigg| \\ & \;\;\;\;+\Bigg|\frac{1}{N}\sum_{i=1}^{N}X^{i}(s)e^{-\alpha f(X^{i}(s))}\bigg(\frac{1}{\frac{1}{N}\sum_{j=1}^{N}e^{-\alpha f(X^{j}_{N}(s))}} -  \frac{1}{\frac{1}{N}\sum_{j=1}^{N}e^{-\alpha f(X^{j}(s))}}\bigg)\Bigg|.
\end{align*}
Using the discrete Jensen inequality, we get
\begin{align}
             &|\bar{X}_{N}(s) - \bar{X}^{\mathcal{E}_{s}}(s)|  \leq C\Bigg(e^{\frac{\alpha}{N}\sum_{j=1}^{N}f(X^{j}_{N}(s))}\frac{1}{N}\sum_{i=1}^{N}|X^{i}_{N}(s) - X^{i}(s)|\nonumber \\ & \;\;\;\; +e^{\frac{\alpha }{N}\sum_{j=1}^{N}f(X^{j}_{N}(s))}\frac{1}{N}\sum_{i=1}^{N}|X^{i}(s)||e^{-\alpha f(X_{N}^{i}(s))} - e^{-\alpha f(X^{i}(s))}| \nonumber  \\ & \;\;\;\; +e^{\frac{\alpha }{N}\sum_{j=1}^{N}(f(X^{j}_{N}(s)) + f(X^{j}(s)))}\frac{1}{N}\sum_{i=1}^{N}|X^{i}(s)|\frac{1}{N}\sum_{j=1}^{N}|e^{-\alpha f(X_{N}^{j}(s))} - e^{-\alpha f(X^{j}(s))}|\Bigg),\label{cbo_eqn_4.26}
\end{align}
where $C $ is a positive constant independent of $N$.
Applying Assumptions~\ref{cboh3.2}-\ref{cboassu3.4}, the Cauchy-Bunyakowsky-Schwartz inequality and Young's inequality, $ab\leq a^{2}/2 + b^{2}/2$, $a,b>0$, we obtain
\begin{align}
             &|\bar{X}_{N}(s) - \bar{X}^{\mathcal{E}_{s}}(s)|  \leq  C\Bigg(e^{\frac{\alpha K_{u}}{N}\sum_{j=1}^{N}|X^{j}_{N}(s)|^{2}}\frac{1}{N}\sum_{i=1}^{N}|X^{i}_{N}(s) - X^{i}(s)| \nonumber\\ & \;\;\;\; +e^{\frac{\alpha K_{u}}{N}\sum_{j=1}^{N}|X^{j}_{N}(s)|^{2}}\frac{1}{N}\sum_{i=1}^{N}|X^{i}(s)|\big(1+ |X^{i}_{N}(s)| + |X^{i}(s)| \big)|X_{N}^{i}(s) - X^{i}(s)| \nonumber\\ & \;\;\;\; +e^{\frac{\alpha K_{u}}{N}\sum_{j=1}^{N}(|X^{j}_{N}(s)|^{2} + |X^{j}(s)|^{2})}\frac{1}{N}\sum_{i=1}^{N} |X^{i}(s)|\frac{1}{N}\sum_{j=1}^{N}\big(1+ |X^{j}_{N}(s)| + |X^{j}(s)| \big)|X_{N}^{j}(s) - X^{j}(s)| \Bigg)
            \nonumber\\ & \leq C\Bigg(e^{\frac{\alpha K_{u}}{N}\sum_{j=1}^{N}|X^{j}_{N}(s)|^{2}}\frac{1}{N}\sum_{i=1}^{N}|X^{i}_{N}(s) - X^{i}(s)| \nonumber\\ & \;\;\;\; + e^{\frac{\alpha K_{u}}{N}\sum_{j=1}^{N}(|X^{j}_{N}(s)|^{2} + |X^{j}(s)|^{2})}\frac{1}{N}\sum_{i=1}^{N}\big(1+|X^{i}_{N}(s)|^{2} + |X^{i}(s)|^{2}\big)|X_{N}^{i}(s) - X^{i}(s)|
          \nonumber  \\ & \;\;\;\; + e^{\frac{\alpha K_{u}}{N}\sum_{j=1}^{N}(|X^{j}_{N}(s)|^{2} + |X^{j}(s)|^{2})} \frac{1}{N}\sum_{i=1}^{N}|X^{i}(s)|^{2}\frac{1}{N}\sum\limits_{j=1}^{N}|X^{j}_{N}(s) - X^{j}(s)|
            \Bigg)\nonumber\\ & \leq C\Bigg(e^{\frac{\alpha K_{u}}{N}\sum_{j=1}^{N}|X^{j}_{N}(s)|^{2}}\frac{1}{N}\sum_{i=1}^{N}|X^{i}_{N}(s) - X^{i}(s)|   +  e^{\frac{\alpha K_{u}}{N}\sum_{j=1}^{N}(|X^{j}_{N}(s)|^{2} + |X^{j}(s)|^{2})}\nonumber\\ & \;\;\;\;\times\bigg(\frac{1}{N}\sum\limits_{i=1}^{N}\big(1+ |X_{N}^{i}(s)|^{2} + |X^{i}(s)|^{2}\big)^{2}\bigg)^{1/2}\bigg(\frac{1}{N}\sum\limits_{i=1}^{N}|X_{N}^{i}(s) - X^{i}(s)|^{2}\bigg)^{1/2}\Bigg). \label{cbo_neweq_4.28}
\end{align}
On squaring both sides, we ascertain
    \begin{align*}
         &|\bar{X}_{N}(s) - \bar{X}^{\mathcal{E}_{s}}(s)|^{2} \leq C\Bigg(e^{\frac{2\alpha K_{u}}{N}\sum_{j=1}^{N}|X^{j}_{N}(s)|^{2}}\frac{1}{N}\sum_{i=1}^{N}|X^{i}_{N}(s) - X^{i}(s)|^{2}   +  e^{\frac{2\alpha K_{u}}{N}\sum_{j=1}^{N}(|X^{j}_{N}(s)|^{2} + |X^{j}(s)|^{2})}\\ & \;\;\;\;\times\bigg(\frac{1}{N}\sum\limits_{i=1}^{N}\big(1+ |X_{N}^{i}(s)|^{2} + |X^{i}(s)|^{2}\big)^{2}\bigg)\bigg(\frac{1}{N}\sum\limits_{i=1}^{N}|X_{N}^{i}(s) - X^{i}(s)|^{2}\bigg)\Bigg). 
    \end{align*}
Using Holder's inequality, we have 
\begin{align*}
    \frac{1}{N}\sum_{j=1}^{N}(|X^{j}_{N}(s)|^{2} + |X^{j}(s)|^{2}) \leq \frac{2}{N^{1/2}}\bigg(\sum_{j=1}^{N}(|X^{j}_{N}(s)|^{4} + |X^{j}(s)|^{4})\bigg)^{1/2}.
\end{align*}
Therefore,
\begin{align*}
        \mathbb{E}\int_{0}^{t\wedge \tau_{R}} |\bar{X}_{N}(s) &- \bar{X}^{\mathcal{E}_{s}}(s)|^{2} ds 
        \leq CRe^{4\alpha K_{u}\sqrt{R}}\int_{0}^{t}\frac{1}{N}\sum\limits_{i=1}^{N}\mathbb{E}|X^{i}_{N}(s\wedge \tau_{R}) - X^{i}(s\wedge \tau_{R})|^{2}ds,
\end{align*}
where $C > 0$ is  independent of $N$ and $R$. 
\end{proof}

\begin{lemma}
Let Assumptions~\ref{cboh3.1}, \ref{cboh3.2}-\ref{cboasm1.4} be satisfied. Then, the following inequality holds for all $t \in [0,T]$:
 \begin{align}\label{cbo_eq_4.28}
 \mathbb{E}\int_{0}^{t\wedge \tau_{R}} |\bar{X}^{\mathcal{E}_{s}}(s) &- \bar{X}(s)|^{2}ds   \leq C\frac{e^{2 \alpha K_{u} \sqrt{R}}}{N},
 \end{align}
 where $\tau_{R}$ is from (\ref{cbo_neweq_4.23}), $\bar{X}^{\mathcal{E}_{s}}(s)$ is from (\ref{cboeq5.2}), $\bar{X}(s)$ is from (\ref{eqcbo2.12}), $C > 0$ is independent of $N$ and  $R$.
\end{lemma}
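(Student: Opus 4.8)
The plan is to reuse the three-term decomposition from the proof of Lemma~\ref{cbolem5.1}. Writing $A_N(s) := \frac{1}{N}\sum_{i=1}^N X^i(s)e^{-\alpha f(X^i(s))}$, $A(s) := \mathbb{E}(X(s)e^{-\alpha f(X(s))})$, $B_N(s) := \frac{1}{N}\sum_{i=1}^N e^{-\alpha f(X^i(s))}$ and $B(s) := \mathbb{E}(e^{-\alpha f(X(s))})$, so that $\bar{X}^{\mathcal{E}_s}(s) = A_N(s)/B_N(s)$ and $\bar{X}(s) = A(s)/B(s)$, I would split
\[
\bar{X}^{\mathcal{E}_s}(s) - \bar{X}(s) = \frac{A_N(s) - A(s)}{B_N(s)} + \frac{A(s)}{B_N(s)B(s)}\big(B(s) - B_N(s)\big),
\]
just as in (\ref{cbo_eq_4.13}). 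The idea is then to treat each factor separately: the fluctuation factors $|A_N - A|$ and $|B_N - B|$ will carry the $1/N$ decay coming from the law of large numbers for i.i.d.\ particles, while the prefactors $1/B_N$ and $A/(B_N B)$ will produce the $e^{2\alpha K_u\sqrt R}$ term.

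The key point is to control $1/B_N(s)$ on the stopped interval. For $s < \tau_R$ one has $\frac1N\sum_{i=1}^N|X^i(s)|^4 < R$, whence, by the Cauchy-Schwarz inequality, $\frac1N\sum_{i=1}^N|X^i(s)|^2 \le \big(\frac1N\sum_{i=1}^N|X^i(s)|^4\big)^{1/2} < \sqrt R$. Substituting this into (\ref{y4.5}) (see also (\ref{cboeq5.6})) gives $1/B_N(s) \le e^{\alpha f_m}e^{\alpha K_u}e^{\alpha K_u\sqrt R} \le C e^{\alpha K_u\sqrt R}$ for all $s \le t\wedge\tau_R$. The remaining prefactors are bounded by constants independent of $N$ and $R$: $1/B(s) \le C$ by (\ref{cbol3.4}), and $|A(s)| \le e^{-\alpha f_m}\mathbb{E}|X(s)| \le C$ by the moment bound of Theorem~\ref{cbolem3.6}. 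Hence, on $\{s \le \tau_R\}$,
\[
|\bar{X}^{\mathcal{E}_s}(s) - \bar{X}(s)|^2 \le C e^{2\alpha K_u\sqrt R}\big(|A_N(s) - A(s)|^2 + |B_N(s) - B(s)|^2\big).
\]

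It then remains to bound the two fluctuation terms uniformly in $s\in[0,T]$. Since the $X^i(s)$ are i.i.d.\ copies of $X(s)$, both $A_N(s) - A(s) = \frac1N\sum_i Y^i(s)$ (with $Y^i(s) = X^i(s)e^{-\alpha f(X^i(s))} - A(s)$) and $B_N(s) - B(s)$ are normalised sums of independent, mean-zero summands; by the computation already performed in (\ref{cboeq4.14}) together with the Cauchy-Schwarz inequality (or directly, by expanding the square and discarding the vanishing cross terms) I obtain $\mathbb{E}|A_N(s) - A(s)|^2 \le C/N$ and $\mathbb{E}|B_N(s) - B(s)|^2 \le C/N$, the constant being finite by virtue of $\sup_{s\in[0,T]}\mathbb{E}|X(s)|^4 < \infty$ from Theorem~\ref{cbolem3.6} and the boundedness of $e^{-\alpha f}$. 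Finally, as the integrand is nonnegative I may enlarge the upper limit $t\wedge\tau_R$ to $t \le T$, factor out the deterministic quantity $e^{2\alpha K_u\sqrt R}$, and integrate to get
\[
\mathbb{E}\int_0^{t\wedge\tau_R}|\bar{X}^{\mathcal{E}_s}(s) - \bar{X}(s)|^2\, ds \le C e^{2\alpha K_u\sqrt R}\int_0^T \frac{C}{N}\,ds \le C\frac{e^{2\alpha K_u\sqrt R}}{N},
\]
which is precisely (\ref{cbo_eq_4.28}). The only genuinely delicate step is the second paragraph: in contrast with the deterministic Wasserstein bound of Lemma~\ref{cboblw}, here the denominator $B_N(s)$ is a random empirical quantity admitting no deterministic lower bound, so that without the localisation at $\tau_R$ the reciprocal $1/B_N$ need not even be integrable. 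The stopping time is exactly what converts the quadratic-growth Assumption~\ref{cboassu3.4} into the explicit (though $R$-dependent) bound $e^{\alpha K_u\sqrt R}$; everything else is a routine $1/N$ law-of-large-numbers estimate.
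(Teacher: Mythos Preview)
Your proof is correct and follows essentially the same approach as the paper: the same two-term decomposition, the same use of (\ref{y4.5}) on the stopped interval to control $1/B_N(s)$ by $Ce^{\alpha K_u\sqrt R}$, and the same i.i.d.\ variance bound for $A_N-A$ and $B_N-B$. Your handling of the stopping time is in fact slightly cleaner than the paper's --- you extend the integration domain to $[0,T]$ by nonnegativity and then apply the $1/N$ bound at fixed times $s$, whereas the paper evaluates the fluctuations at $s\wedge\tau_R$ and invokes Doob's optional stopping theorem to make the cross terms vanish.
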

\begin{proof}
We have
\begin{align*}
    |\bar{X}^{\mathcal{E}_{s}}(s) &- \bar{X}(s)| 
    =  \bigg|\sum_{i=1}^{N}X^{i}(s) \frac{e^{-\alpha f(X^{i}(s))}}{\sum_{j=1}^{N}e^{-\alpha f(X^{j}(s))}} - \int_{\mathbb{R}^{d}}x\frac{e^{-\alpha f(x)}}{\int_{\mathbb{R}^{d}}e^{-\alpha f(x)} \mathcal{L}_{X(s)}(dx)}\mathcal{L}_{X(s)}(dx)\bigg| \\ & \leq   \frac{1}{\frac{1}{N}\sum_{j=1}^{N}e^{-\alpha f(X^{j}(s))}}\Bigg|\frac{1}{N}\sum_{i=1}^{N}\bigg(X^{i}(s)e^{-\alpha f(X^{i}(s))}- \int_{\mathbb{R}^{d}}x e^{-\alpha f(x)}\mathcal{L}_{X(s)}(dx)\bigg)\Bigg|\\ & \;\;\;\; + \Bigg|\int_{\mathbb{R}^{d}}x e^{-\alpha f(x)}\mathcal{L}_{X(s)}(dx)\frac{\frac{1}{N}\sum_{j=1}^{N}\Big(e^{-\alpha f(X^{j}(s))} -  \int_{\mathbb{R}^{d}}e^{-\alpha f(x)} \mathcal{L}_{X(s)}(dx)\Big)}{\frac{1}{N}\sum_{j=1}^{N}e^{-\alpha f(X^{j}(s))}   \int_{\mathbb{R}^{d}}e^{-\alpha f(x)} \mathcal{L}_{X(s)}(dx)}\Bigg|.  
\end{align*}
Using Jensen's inequality and squaring both sides, we get
\begin{align*}
    |\bar{X}^{\mathcal{E}_{s}}(s) &- \bar{X}(s)|^{2}  \leq   Ce^{\frac{2\alpha }{N}\sum_{j=1}^{N}f(X^{j}(s))} \bigg|\frac{1}{N}\sum_{i=1}^{N}\Big(X^{i}(s)e^{-\alpha f(X^{i}(s))}- \mathbb{E}\big(X(s)e^{-\alpha f(X(s))}\big)\Big)\bigg|^{2}\\ & \;\;\;\; + Ce^{\frac{2\alpha }{N}\sum_{j=1}^{N}f(X^{j}(s))}e^{2\alpha \mathbb{E}f(X(s))}(\mathbb{E}|X(s)|)^{2}\bigg|\frac{1}{N}\sum_{j=1}^{N}\Big(e^{-\alpha f(X^{j}(s))} -  \mathbb{E}\big(e^{-\alpha f(X(s))}\big)\Big)\bigg|^{2},  
\end{align*} 
where $C$ is a positive constant independent of $N$.
Applying Assumption~\ref{cboassu3.4}, we ascertain
\begin{align*}
    |\bar{X}^{\mathcal{E}_{s}}(s) &- \bar{X}(s)|^{2}  \leq   Ce^{\frac{2\alpha K_{u}}{N}\sum_{j=1}^{N}|X^{j}(s)|^{2}} \bigg|\frac{1}{N}\sum_{i=1}^{N}\Big(X^{i}(s)e^{-\alpha f(X^{i}(s))}- \mathbb{E}\big(X(s)e^{-\alpha f(X(s))}\big)\Big)\bigg|^{2}\\ & \;\;\;\; + Ce^{\frac{2\alpha K_{u}}{N}\sum_{j=1}^{N}|X^{j}(s)|^{2}}e^{2\alpha K_{u}\mathbb{E}|X(s)|^{2}}(\mathbb{E}|X(s)|)^{2}\bigg|\frac{1}{N}\sum_{j=1}^{N}\Big(e^{-\alpha f(X^{j}(s))} -  \mathbb{E}\big(e^{-\alpha f(X(s))}\big)\Big)\bigg|^{2}. 
\end{align*}    
Hence, using Theorem~\ref{cbolem3.6}, we obtain
\begin{align*}
    \mathbb{E}\int_{0}^{t\wedge \tau_{R}} |\bar{X}^{\mathcal{E}_{s}}(s) &- \bar{X}(s)|^{2}ds  \leq Ce^{2 \alpha K_{u} \sqrt{R}} \mathbb{E}\int_{0}^{t\wedge \tau_{R}} \bigg|\frac{1}{N}\sum_{i=1}^{N}\Big(X^{i}(s)e^{-\alpha f(X^{i}(s))}- \mathbb{E}\big(X(s)e^{-\alpha f(X(s))}\big)\Big)\bigg|^{2}ds\\ & \;\;\;\; + Ce^{2 \alpha K_{u} \sqrt{R}}\mathbb{E}\int_{0}^{t\wedge \tau_{R}}\bigg|\frac{1}{N}\sum_{j=1}^{N}\Big(e^{-\alpha f(X^{j}(s))} -  \mathbb{E}\big(e^{-\alpha f(X(s))}\big)\Big)\bigg|^{2}ds   
    \\ & \leq  Ce^{2 \alpha K_{u} \sqrt{R}}\int_{0}^{t} \mathbb{E}\bigg|\frac{1}{N}\sum_{i=1}^{N}U^{i}_{1}(s\wedge  \tau_{R})\bigg|^{2}ds + Ce^{2 \alpha K_{u} \sqrt{R}}\int_{0}^{t}\mathbb{E}\bigg|\frac{1}{N}\sum_{i=1}^{N} U^{i}_{2}(s\wedge \tau_{R})\bigg|^{2}ds,  
\end{align*}
where $U_{1}^{i}(s \wedge \tau_{R}) = X^{i}(s\wedge \tau_{R})e^{-\alpha f(X^{i}(s\wedge \tau_{R}))} - \mathbb{E}\big(X(s\wedge \tau_{R})e^{-\alpha f(X(s\wedge \tau_{R}))}\big) $, $U_{2}^{i}(s\wedge \tau_{R}) = e^{-\alpha f(X^{i}(s))} -  \mathbb{E}\big(e^{-\alpha f(X(s))}\big)$, and $C$  is independent of $N$ and $R$. 
We have
\begin{align*}
    \mathbb{E}\bigg|\frac{1}{N}\sum_{i=1}^{N}U^{i}_{1}(s\wedge  \tau_{R})\bigg|^{2} = 
      \frac{1}{N^{2}}\sum\limits_{i=1}^{N}\mathbb{E}|U_{1}^{i}(s\wedge \tau_{R})|^{2} + \frac{1}{N^{2}}\sum_{\substack{i,j=1 ,\; i\neq j }}^{N}\mathbb{E}\big(U^{i}_{1}(s\wedge \tau_{R})\cdot U_{1}^{j}(s\wedge \tau_{R})\big).
\end{align*}
Note that $\mathbb{E}\big(U^{i}_{1}(s)\cdot U_{1}^{j}(s)\big) = 0 $ for $i\neq j$ and $s\wedge \tau_{R}$ is a bounded stopping time then $\mathbb{E}\big(U^{i}_{1}(s\wedge \tau_{R})\cdot U_{1}^{j}(s\wedge \tau_{R})\big) = 0$ for $i\neq j$ because of Doob's optional stopping theorem \cite[Theorem 2.2.1]{cbos11}. Using Theorem~\ref{cbolem3.6}, we deduce
\begin{align}\label{cbo_eq_4.26}
    \mathbb{E}\bigg|\frac{1}{N}\sum_{i=1}^{N}U^{i}_{1}(s\wedge  \tau_{R})\bigg|^{2} \leq \frac{C}{N}, 
\end{align}
where $C$ is independent of $N$. In the similar manner, we can obtain
\begin{align}\label{cbo_eq_4.27}
    \mathbb{E}\bigg|\frac{1}{N}\sum_{i=1}^{N} U^{i}_{2}(s\wedge \tau_{R})\bigg|^{2} \leq \frac{C}{N},
\end{align}
where $C$ is independent of $N$. Using (\ref{cbo_eq_4.26}) and (\ref{cbo_eq_4.27}), we get the following estimate:
 \begin{align*}
 \mathbb{E}\int_{0}^{t\wedge \tau_{R}} |\bar{X}^{\mathcal{E}_{s}}(s) &- \bar{X}(s)|^{2}ds   \leq C\frac{e^{2 \alpha K_{u} \sqrt{R}}}{N},
 \end{align*}
 where $C$ is independent of $N$ and $R$.
\end{proof}

\begin{theorem}\label{cbo_thrm4.5}
Let Assumptions~\ref{cboh3.1}, \ref{cboh3.2}-\ref{cboasm1.4} be satisfied. Let $X_{N}^{i}(t)$  solve (\ref{cboeq1.8}). Let $X^{i}(t)$ represent independent processes which solve (\ref{cbomfsdep}). Let us assume that $X^{i}_{N}(0) = X^{i}(0) $, a.s., $i=1\dots,N$. Let  $\mathbb{E}|Z|^{4} \leq C$, $\sup_{i =1,\dots,N}\mathbb{E}|X^{i}(0)|^{4} \leq C$, and $\sup_{i=1,\dots,N}\mathbb{E}|X^{i}_{N}(0)|^{4} \leq C$. Then, the following mean-square convergence result holds for all $t \in [0,T]$:
\begin{align}
\lim\limits_{N \rightarrow \infty }\sup_{i =1,\dots,N}\mathbb{E}|X_{N}^{i}(t) - X^{i}(t)|^{2} = 0. 
\end{align}

\end{theorem}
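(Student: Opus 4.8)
The plan is to run a synchronous coupling: for each $i$ I realize the mean-field copy $X^i(t)$ on the \emph{same} Wiener process $W^i$ and Poisson random measure $\mathcal{N}^i$ that drive the particle $X^i_N(t)$, started from the common datum $X^i_N(0)=X^i(0)$; independence across $i$ of the $X^i$ then follows from the independence of the driving data. Writing $D^i:=X^i_N-X^i$ and applying It\^o's formula to $|D^i(t\wedge\tau_R)|^2$, with $\tau_R$ from (\ref{cbo_neweq_4.23}), the Wiener integral is a mean-zero martingale; after taking expectation (using Doob's optional stopping to compensate the jump integral) the jump part contributes, via $\mathbb{E}(Z)=0$ and the compensator $\lambda\rho_z(z)\,dz\,ds$, the term $\lambda\gamma^2(s)\mathbb{E}|\Zstroke|^2\,|D^i-(\bar X_N-\bar X)|^2$, which together with the diffusion correction gives a factor $\big(2\sigma^2(s)+\lambda\gamma^2(s)\mathbb{E}|\Zstroke|^2\big)|D^i-(\bar X_N-\bar X)|^2$. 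Since $D^i(0)=0$, Young's inequality on the drift cross-term and the elementary bound $|D^i-(\bar X_N-\bar X)|^2\le 2|D^i|^2+2|\bar X_N-\bar X|^2$ yield
\[
\mathbb{E}|D^i(t\wedge\tau_R)|^2 \le C\int_0^t \mathbb{E}|D^i(s\wedge\tau_R)|^2\,ds + C\,\mathbb{E}\int_0^{t\wedge\tau_R}|\bar X_N(s)-\bar X(s)|^2\,ds,
\]
with $C$ independent of $N$ and $R$.

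Next I would split $\bar X_N-\bar X=(\bar X_N-\bar X^{\mathcal{E}_s})+(\bar X^{\mathcal{E}_s}-\bar X)$ and invoke the two preceding lemmas, i.e.\ (\ref{cbo_eq_4.23}) and (\ref{cbo_eq_4.28}), so the common perturbation term is controlled by $CRe^{4\alpha K_u\sqrt R}\int_0^t \tfrac1N\sum_i\mathbb{E}|D^i(s\wedge\tau_R)|^2\,ds + Ce^{2\alpha K_u\sqrt R}/N$. Taking the supremum over $i$, setting $\psi(t):=\sup_i\mathbb{E}|D^i(t\wedge\tau_R)|^2$, and using $\tfrac1N\sum_i\le\sup_i$, I obtain the closed inequality $\psi(t)\le C_R\int_0^t\psi(s)\,ds + Ce^{2\alpha K_u\sqrt R}/N$ with $C_R=CRe^{4\alpha K_u\sqrt R}$. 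Gr\"onwall's lemma then gives, for all $t\in[0,T]$,
\[
\psi(t)\le \frac{C_1(R)}{N},\qquad C_1(R):=Ce^{2\alpha K_u\sqrt R}\,e^{C_R T},
\]
where $C_1(R)$ depends on $R$ but is independent of $N$.

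Finally I would remove the localization by partitioning $\Omega=\Omega_1(t)\cup\Omega_2(t)$ as in (\ref{cbo_eq_4.20})--(\ref{cbo_eq_4.21}). On $\Omega_2(t)=\{\tau_{1,R}>t\}\cap\{\tau_{2,R}>t\}$ one has $t\wedge\tau_R=t$, hence $\mathbb{E}\big[|D^i(t)|^2\mathbbm{1}_{\Omega_2(t)}\big]\le\psi(t)\le C_1(R)/N$. On $\Omega_1(t)$ I would use the Cauchy-Bunyakowsky-Schwartz inequality with the uniform-in-$N$ fourth moment bounds of Lemma~\ref{cbolemma3.3} and Theorem~\ref{cbolem3.6} to get $\mathbb{E}\big[|D^i(t)|^2\mathbbm{1}_{\Omega_1(t)}\big]\le\big(\mathbb{E}|D^i(t)|^4\big)^{1/2}\mathbb{P}(\Omega_1(t))^{1/2}\le C\,\mathbb{P}(\Omega_1(t))^{1/2}$, while Markov's inequality applied to $\tau_{1,R},\tau_{2,R}$ (again via the moment bounds) gives $\mathbb{P}(\Omega_1(t))\le C/R$. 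Combining,
\[
\sup_{i=1,\dots,N}\mathbb{E}|X^i_N(t)-X^i(t)|^2\le \frac{C_1(R)}{N}+\frac{C}{\sqrt R}\qquad\text{for every }R>0.
\]
The main obstacle is precisely that $C_1(R)$ grows very rapidly (indeed doubly-exponentially in $\sqrt R$), because the weighted mean $\bar X_N$ is only locally Lipschitz with constant blowing up on $\{|x|\le R\}$; this forbids a direct passage $N\to\infty$. I circumvent it by a double limit: for each fixed $R$ the first term vanishes as $N\to\infty$, so $\limsup_{N\to\infty}\sup_i\mathbb{E}|X^i_N(t)-X^i(t)|^2\le C/\sqrt R$, and letting $R\to\infty$ closes the argument.
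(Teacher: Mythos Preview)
Your proof is correct and follows essentially the same route as the paper: synchronous coupling, It\^o's formula applied to $|X^i_N(t\wedge\tau_R)-X^i(t\wedge\tau_R)|^2$, the same splitting $\bar X_N-\bar X=(\bar X_N-\bar X^{\mathcal{E}_s})+(\bar X^{\mathcal{E}_s}-\bar X)$ handled by the two preceding lemmas, Gr\"onwall on $\sup_i$, and the partition $\Omega=\Omega_1(t)\cup\Omega_2(t)$ with the $\Omega_1$ piece controlled via Cauchy--Schwarz and the uniform fourth-moment bounds.

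The only noteworthy difference is in how you pass to the limit. The paper makes an explicit choice $R=R(N)=\frac{1}{C_u^2}(\ln\ln N^{1/2})^2$, which balances the two terms and yields a (very slow) quantitative rate, namely $\sup_i\mathbb{E}|X^i_N(t)-X^i(t)|^2\le C(\ln\ln N^{1/2})^{-2}$. You instead use the double limit $\limsup_{N\to\infty}\le C/\sqrt R$ followed by $R\to\infty$, which is cleaner and entirely sufficient for the qualitative statement of the theorem, but gives no rate. Either conclusion is valid here since the theorem only claims convergence.
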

\begin{proof}
Let $t \in (0,T]$. We can write
\begin{align*}
    \mathbb{E}|X_{N}^{i}(t) - X^{i}(t)|^{2} &= \mathbb{E}\big(|X_{N}^{i}(t) - X^{i}(t)|^{2}I_{\Omega_{1}(t)}\big) + \mathbb{E}\big( |X_{N}^{i}(t) - X^{i}(t)|^{2}I_{\Omega_{2}(t)}\big) \\ 
    & =: E_{1}(t) + E_{2}(t),
    \end{align*}
    where $\Omega_{1}(t)$ and $\Omega_{2}(t)$ are from (\ref{cbo_eq_4.20}) and (\ref{cbo_eq_4.21}), respectively.
 Using the Cauchy-Bunyakowsky-Shwartz inequality and Chebyshev's inequality, we obtain
\begin{align*}
    E_{1}(t) :&= \mathbb{E}\big(|X_{N}^{i}(t) - X^{i}(t)|^{2}I_{\Omega_{1}(t)}\big)
    \leq \big(\mathbb{E}|X_{N}^{i}(t) - X^{i}(t)|^{4}\big)^{1/2}\big(\mathbb{E}I_{\Omega_{1}(t)}\big)^{1/2}
    \\ & \leq C \big(\mathbb{E}|X_{N}^{i}(t)|^{4} + \mathbb{E}|X^{i}(t)|^{4}\big)^{1/2} \bigg(\frac{1}{RN}\sum\limits_{i=1}^{N}\mathbb{E}\sup_{0\leq s \leq t}|X^{i}_{N}(s)|^{4} + \frac{1}{RN}\sum\limits_{i=1}^{N}\mathbb{E}\sup_{0\leq s\leq t}|X^{i}(s)|^{4}\bigg)^{1/2}. 
\end{align*}
We get the following estimate for $E_{1}(t)$ by applying 
Lemma~\ref{cbolemma3.3} and Theorem~\ref{cbolem3.6}:
\begin{align}\label{cbo_neqeq_4.33}
    E_{1}(t) \leq  \frac{C}{R}, 
\end{align}
where $C$ is a positive constant independent of $N$ and $R$.

Now, we estimate $E_{2}(t)$. 
We have $\mathbb{E}(|X_{N}^{i}(t) - X^{i}(t)|^{2}I_{\Omega_{2}(t)}) \leq \mathbb{E}(|X_{N}^{i}(t\wedge \tau_{R}) -X^{i}(t \wedge \tau_{R})|^{2}) $. Using Ito's formula, we have
\begin{align}
|X_{N}^{i}&(t\wedge \tau_{R}) - X^{i}(t\wedge \tau_{R})|^{2} = |X^{i}_{N}(0) - X^{i}(0)|^{2} \nonumber\\ & \;\;-  2\mathbb{E}\int_{0}^{t\wedge \tau_{R}}\beta(s)(X_{N}^{i}(s) - X^{i}(s))\cdot (X_{N}^{i}(s) - \bar{X}_{N}(s) - X^{i}(s) + \bar{X}(s))ds \nonumber \\ &
\;\;    + 2\int_{0}^{t\wedge \tau_{R}}\sigma^{2}(s)|\diag(X_{N}^{i}(s) - \bar{X}_{N}(s) -X^{i}(s) + \bar{X}(s))|^{2}ds \nonumber\\ & \;\;+2\sqrt{2}\int_{0}^{t\wedge \tau_{R}}\sigma(s)\big((X_{N}^{i}(s) -X^{i}(s))\cdot\diag(X_{N}^{i}(s) - \bar{X}_{N}(s)- X^{i}(s) +\bar{X}(s))dW^{i}(s)\big) \nonumber        \\ & \;\;+\int_{0}^{t \wedge \tau_{R}}\int_{\mathbb{R}^{d}}\Big(|X_{N}^{i}(s^{-}) - X^{i}(s^{-}) + \gamma(s)\diag(X_{N}^{i}(s^{-}) - \bar{X}_{N}(s^{-}))z \nonumber \\ & \;\;\;\;\;\;\;\;- \gamma(s)\diag(X^{i}(s^{-}) - \bar{X}(s^{-}))z|^{2}- |X^{i}_{N}(s^{-}) - X^{i}(s^{-})|^{2}\Big)\mathcal{N}^{i}(ds,dz). \label{cbo_neweq_4.34} 
\end{align}
The Cauchy-Bunyakowsky-Schwartz inequality and Young's inequality provide the following estimates:
\begin{align}
    &(X_{N}^{i}(s) - X^{i}(s))\cdot (X_{N}^{i}(s) - \bar{X}_{N}(s) - X^{i}(s) + \bar{X}(s)) \leq C(|X^{i}_{N}(s) - X^{i}(s)|^{2} + |\bar{X}_{N}(s) - \bar{X}(s)|^{2}), \label{cbo_neweq_4.35}\\
    &|\diag(X_{N}^{i}(s) - \bar{X}_{N}(s) -X^{i}(s) + \bar{X}(s))|^{2} \leq C(|X^{i}_{N}(s) - X^{i}(s)|^{2} + |\bar{X}_{N}(s) - \bar{X}(s)|^{2}), 
\end{align}
and 
\begin{align}
    &\Big(|X_{N}^{i}(s^{-}) - X^{i}(s^{-}) + \gamma(s)\diag(X_{N}^{i}(s^{-}) - \bar{X}_{N}(s^{-}))z - \gamma(s)\diag(X^{i}(s^{-})\nonumber \\ & \;\;\;\; - \bar{X}(s^{-}))z|^{2}- |X^{i}_{N}(s^{-}) - X^{i}(s^{-})|^{2}\Big) = \gamma^{2}(s)|\big((X^{i}_{N}(s^{-}) - \bar{X}_{N}(s^{-}) - X^{i}(s^{-}) + \bar{X}(s^{-}))\cdot z\big)|^{2} \nonumber \\ & \;\;\;\; + 2\gamma(s)\Big( \big(X_{N}^{i}(s^{-}) - X^{i}(s^{-})\big)\cdot \big(\diag(X_{N}^{i}(s^{-}) - \bar{X}_{N}(s^{-}) - X^{i}(s^{-}) + \bar{X}(s^{-}))z\big)\Big) \nonumber\\ & \leq  C(|X^{i}_{N}(s^{-}) - X^{i}(s^{-})|^{2} + |\bar{X}_{N}(s^{-}) - \bar{X}(s^{-})|^{2})|z|^{2}\nonumber \\ & \;\;\;\; +  2\gamma(s)\Big( \big(X_{N}^{i}(s^{-}) - X^{i}(s^{-})\big)\cdot \big(\diag(X_{N}^{i}(s^{-}) - \bar{X}_{N}(s^{-}) - X^{i}(s^{-}) + \bar{X}(s^{-}))z\big)\Big). \label{cbo_neweq_4.38}
\end{align}
Taking expectations on both sides of (\ref{cbo_neweq_4.34}), using estimates (\ref{cbo_neweq_4.35})-(\ref{cbo_neweq_4.38}) and applying Doob's optional stopping theorem \cite[Theorem 2.2.1]{cbos11}, we get
\begin{align}
 &\mathbb{E}|X_{N}^{i}(t\wedge \tau_{R}) - X^{i}(t\wedge \tau_{R})|^{2} \leq \mathbb{E}|X_{N}^{i}(0) - X^{i}(0)|^{2} \nonumber 
 \\ & \;\;\;\;   + C\mathbb{E}\int_{0}^{t\wedge \tau_{R}}\big(|X_{N}^{i}(s) - X^{i}(s)|^{2} + |\bar{X}_{N}(s) - \bar{X}(s)|^{2}\big) ds   \nonumber 
\\ & \;\;\;\; + C\mathbb{E}\int_{0}^{t\wedge \tau_{R}}\int_{\mathbb{R}^{d}}(|X^{i}_{N}(s) - X^{i}(s)|^{2} + |\bar{X}_{N}(s) - \bar{X}(s)|^{2})|z|^{2}\rho_{z}(z)dz ds \nonumber
\\ & \leq \mathbb{E}|X_{N}^{i}(0) - X^{i}(0)|^{2}   + C\mathbb{E} \int_{0}^{t\wedge \tau_{R}}|X_{N}^{i}(s) - X^{i}(s)|^{2}ds \nonumber\\ & \;\;\;\;  + C\mathbb{E}\int_{0}^{t\wedge \tau_{R}} |\bar{X}_{N}(s) - \bar{X}^{\mathcal{E}_{s}}(s)|^{2} ds    + C\mathbb{E}\int_{0}^{t\wedge \tau_{R}}|\bar{X}^{\mathcal{E}_{s}}(s) - \bar{X}(s)|^{2}  ds.   \label{cbo_eq_4.22} \end{align}

Substituting (\ref{cbo_eq_4.23}) and (\ref{cbo_eq_4.28}) in (\ref{cbo_eq_4.22}), we obtain
\begin{align*}
    \mathbb{E}&\big(|X_{N}^{i}(t\wedge \tau_{R}) - X^{i}(t\wedge \tau_{R})|^{2}\big) \leq \mathbb{E}|X_{N}^{i}(0) - X^{i}(0)|^{2} \\ & \;\;\;\; +  CRe^{4\alpha K_{u}\sqrt{R}}\int_{0}^{t}\frac{1}{N}\sum\limits_{i=1}^{N}\mathbb{E}\big(|X^{i}_{N}(s\wedge \tau_{R}) - X^{i}(s\wedge \tau_{R})|^{2}\big)ds + C\frac{e^{2 \alpha K_{u} 
    \sqrt{R}}}{N},
\end{align*}
where $C>0$ is independent of $N$ and $R$.
Taking supremum over $i =1,\dots, N$, we get
    \begin{align*}
   \sup_{i=1,\dots,N}\mathbb{E}\big(|&X_{N}^{i}(t\wedge \tau_{R}) - X^{i}(t\wedge \tau_{R})|^{2}\big) \leq \sup_{i=1,\dots,N}\mathbb{E}|X_{N}^{i}(0) - X^{i}(0)|^{2} \\ & +  CRe^{4\alpha K_{u}\sqrt{R}}\int_{0}^{t}\sup_{i=1,\dots,N}\mathbb{E}\big(|X^{i}_{N}(s\wedge \tau_{R}) - X^{i}(s\wedge \tau_{R})|^{2}\big)ds  + C\frac{e^{2 \alpha K_{u} \sqrt{R}}}{N}.
   \end{align*}
Using Gr\"{o}nwall's inequality, we have
\begin{align}
     \sup_{i=1,\dots,N}\mathbb{E}\big(|X_{N}^{i}(t\wedge \tau_{R})& - X^{i}(t\wedge \tau_{R})|^{2}\big) \leq \frac{C}{N}e^{CRe^{4\alpha K_{u}\sqrt{R}}}e^{2 \alpha K_{u} R} \leq \frac{C}{N}e^{e^{C_{u}\sqrt{R}}},\label{cbo_eqn_4.30}
\end{align}
where $C>0$ and $C_{u}>0$ are constants independent of $N$ and $R$. In the above calculations, we have used the facts that $R < e^{2\alpha K_{u}\sqrt{R}}$ and $2\alpha K_{u}\sqrt{R} < e^{2\alpha K_{u}\sqrt{R}}$ for sufficiently large $R$. 

We choose $R = \frac{1}{C_{u}^{2}}(\ln{(\ln({N^{1/2})})})^{2} $.  Therefore,
\begin{align*}
\sup_{i=1,\dots,N}  \mathbb{E}(|X_{N}^{i}(t) - X^{i}(t)|^{2}I_{\Omega_{2}(t)}) \leq    \sup_{i=1,\dots,N}\mathbb{E}\big(|X_{N}^{i}(t\wedge \tau_{R})& - X^{i}(t\wedge \tau_{R})|^{2}\big) \leq  \frac{C}{N^{1/2}}, 
\end{align*}
which implies
\begin{align}\label{cbo_eq_4.31}
\lim\limits_{N\rightarrow \infty} \sup_{i=1,\dots,N}  \mathbb{E}(|X_{N}^{i}(t) - X^{i}(t)|^{2}I_{\Omega_{2}(t)}) = \lim\limits_{N\rightarrow \infty} \sup_{i=1,\dots,N}\mathbb{E}\big(|X_{N}^{i}(t\wedge \tau_{R})& - X^{i}(t\wedge \tau_{R})|^{2}\big) = 0.
\end{align}
The term (\ref{cbo_neqeq_4.33}) and the choice of $R$  provide the following estimate:
\begin{align*}
    \mathbb{E}\big(|X_{N}^{i}(t) - X^{i}(t)|^{2}I_{\Omega_{1}}(t)\big) \leq \frac{C}{R} \leq \frac{C}{(\ln{(\ln({N^{1/2})}))^{2}}},
\end{align*}
where $C>0$ is independent of $N$ and $R$. This yields
\begin{align}\label{cbo_eq_4.24}
    \lim_{N\rightarrow \infty}\sup_{i = 1,\dots,N }\mathbb{E}\big(|X^{i}_{N}(t) - X^{i}(t)|^{2}I_{\Omega_{1}(t)}\big) = 0.
\end{align}
As a  consequence of (\ref{cbo_eq_4.31}) and (\ref{cbo_eq_4.24}) , we get
\begin{align*}
    \lim_{N\rightarrow \infty}\sup_{i=1,\dots,N}\mathbb{E}|X_{N}^{i}(t) - X^{i}(t)|^{2} = 0,
\end{align*}
for all $t \in [0,T]$.
\end{proof}
\begin{remark}
It is not difficult to see from the above theorem that the empirical measure of the particle system (\ref{cboeq1.8}) converges to the law of the mean-field SDEs (\ref{cbomfsdep}) in $2-$Wasserstein metric, i.e. for all $t \in [0,T]$:
\begin{align}
    \lim_{N\rightarrow \infty}\mathcal{W}_{2}^{2}(\mathcal{E}_{t}^{N}, \mathcal{L}_{X(t)}) = 0,
\end{align}
where $\mathcal{E}_{t}^{N} = \frac{1}{N}\sum_{i=1}^{N}\delta_{X^{i}_{N}(t)} $.
\end{remark}
\begin{remark}
 Theorem~\ref{cbo_thrm4.5} implies weak convergence of the empirical measure, $\mathcal{E}_{t}^{N}$ of interacting particle system towards $\mathcal{L}_{X(t)}$ which is the law of the mean-field limit process $X(t)$ (see \cite{cbo35,cbo29}). 
\end{remark}

\subsection{Convergence of the numerical scheme}\label{cbo_conv_ns}
To implement  the particle system (\ref{cbos1.6}), we have proposed to utilize the Euler scheme introduced in Section~\ref{subsec_implemen}. The jump-diffusion SDEs  (\ref{cbos1.6}), governing interacting particle system, have locally Lipschitz and linearly growing coefficients. Due to non-global Lipschitzness of the coefficients, it is not straightforward to deduce  convergence of the Euler scheme to (\ref{cbos1.6}). In this section, we go one step further and prove this convergence result uniform in $N$. To this end, we introduce the function $\kappa_{h}(t) = t_{k}$, $t_{k} \leq t < t_{k+1}$, where $ 0=t_{0}<\dots<t_{n} = T$ is a uniform partition of $[0,T]$,  i.e. $t_{k+1} - t_{k} = h$ for all $k=0,\dots,n-1$. We write the continuous version of the numerical scheme (\ref{cbo_dis_ns}) as follows:
\begin{align}\label{cboeq5.20}
    dY^{i}_{N}(t) &= -\beta(t)(Y^{i}_{N}(\kappa_{h}(t)) - \bar{Y}_{N}(\kappa_{h}(t)))dt + \sqrt{2}\sigma(t)\diag(Y^{i}_{N}(\kappa_{h}(t)) - \bar{Y}_{N}(\kappa_{h}(t)))dW^{i}(t)\nonumber \\ & \;\;\;\; + \int_{\mathbb{R}^{d}}\diag(Y^{i}_{N}(\kappa_{h}(t)) - \bar{Y}_{N}(\kappa_{h}(t)))z\mathcal{N}^{i}(dt,dz).
\end{align}
In this section, our aim is to show mean-square convergence of $Y^{i}_{N}(t)$ to $X^{i}_{N}(t)$ uniformly in $N$, i.e.
\begin{align}
    \lim_{h\rightarrow 0}\sup_{i=1,\dots,N}\mathbb{E}|Y^{i}_{N}(t) - X^{i}_{N}(t)|^{2} = 0,
\end{align}
where $h \rightarrow 0$ means that keeping $T$ fixed the time-step of uniform partition of $[0,T]$ goes to zero.

Let Assumptions~\ref{cboh3.1}-\ref{cboasu1.1} hold. Let  $\mathbb{E}|Y^{i}_{N}(0)|^{2} < \infty$ and $\mathbb{E}|Z|^{2} < \infty$, then the particle system (\ref{cboeq5.20}) is well-posed (cf. Theorem~\ref{cbo_thrm_3.2}). Moreover, if $\mathbb{E}|Y^{i}_{N}(0)|^{2p} <\infty $ and $\mathbb{E}|Z|^{2p} < \infty$ for some $p \geq 1$, then, due to Lemma~\ref{cbolemma3.3}, the following holds:
\begin{align}
    \mathbb{E}\sup_{0\leq t\leq T}|Y^{i}_{N}(t)|^{2p} \leq K, \label{cbo_neweq_4.45}
\end{align}
where we cannot  say that $K$ is independent of $h$. However, to prove the convergence of numerical scheme we need the uniform in $h$ and $N$ moment bound, which we prove in the next lemma.
\begin{lemma}\label{cbo_lem4.6}
Let Assumptions~\ref{cboh3.1}, \ref{cboh3.2}-\ref{cboasm1.4} hold. Let  $p \geq 1$, $\mathbb{E}|Y^{i}_{N}(0)|^{2p} < \infty$ and $\mathbb{E}|Z|^{2p} < \infty$. Then, the following holds:
\begin{align}
    \sup_{i=1,\dots,N}\mathbb{E}\sup_{0\leq t\leq T}|Y^{i}_{N}(t)|^{2p} \leq K_{d},
\end{align}
where $K_{d}$ is a positive constant independent of $h$ and $N$.
\end{lemma}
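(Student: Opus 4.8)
The plan is to mirror the proof of Lemma~\ref{cbolemma3.3} for the continuous interpolation $Y^{i}_{N}(t)$ defined by (\ref{cboeq5.20}), the only new feature being that the coefficients are frozen at the grid points $\kappa_{h}(t)$ rather than evaluated at $t$. First I would fix a positive integer $p$ (the extension to real $p\geq 1$ follows at the end by Hölder's inequality, exactly as in Lemma~\ref{cbolemma3.3}) and apply Itô's formula for jump-diffusions to $|Y^{i}_{N}(t)|^{2p}$. This produces a drift term, an Itô (Brownian) stochastic integral, two Itô-correction terms of orders $2p-4$ and $2p-2$, and a jump term driven by $\mathcal{N}^{i}(ds,dz)$; the difference from (\ref{cbo_eq_3.3}) is that in every coefficient the argument $Y^{i}_{N}(s)-\bar{Y}_{N}(s)$ is replaced by the frozen increment $Y^{i}_{N}(\kappa_{h}(s))-\bar{Y}_{N}(\kappa_{h}(s))$, while the powers coming from differentiating $|\cdot|^{2p}$ are still evaluated at the running argument $Y^{i}_{N}(s)$. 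After taking the supremum over $[0,T]$ and then expectation I would estimate each term as in Lemma~\ref{cbolemma3.3}: Young's inequality on the drift and on the two correction terms; the Burkholder--Davis--Gundy inequality together with the generalized Young and Hölder inequalities on the stochastic integral, absorbing $\frac{1}{2}\mathbb{E}\sup_{0\leq t\leq T}|Y^{i}_{N}(t)|^{2p}$ into the left-hand side; and the Cauchy--Bunyakowsky--Schwarz inequality followed by compensation of $\mathcal{N}^{i}$ by its intensity on the jump term.

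The key observation that removes any dependence on $h$ is that $\kappa_{h}(s)\leq s$, so the frozen values are dominated by running suprema: $|Y^{i}_{N}(\kappa_{h}(s))|^{2p}\leq \sup_{0\leq u\leq s}|Y^{i}_{N}(u)|^{2p}$ and $\frac{1}{N}\sum_{i}|Y^{i}_{N}(\kappa_{h}(s))|^{2p}\leq \frac{1}{N}\sum_{i}\sup_{0\leq u\leq s}|Y^{i}_{N}(u)|^{2p}$. Moreover, inequality (\ref{y4.2}) holds for an arbitrary configuration in $\mathbb{R}^{Nd}$, hence in particular for the frozen configuration $(Y^{1}_{N}(\kappa_{h}(s)),\dots,Y^{N}_{N}(\kappa_{h}(s)))$, which yields
\begin{equation*}
|\bar{Y}_{N}(\kappa_{h}(s))|^{2p}\leq 2^{p-1}\Big(L_{1}^{p}+L_{2}^{p}\frac{1}{N}\sum_{i=1}^{N}\sup_{0\leq u\leq s}|Y^{i}_{N}(u)|^{2p}\Big).
\end{equation*}
Combining these two facts with the estimates above, every term on the right-hand side is bounded by a constant multiple of $\int_{0}^{T}\big(1+\mathbb{E}\sup_{0\leq u\leq s}|Y^{i}_{N}(u)|^{2p}+\frac{1}{N}\sum_{i}\mathbb{E}\sup_{0\leq u\leq s}|Y^{i}_{N}(u)|^{2p}\big)ds$, with a constant independent of both $h$ and $N$.

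I would then take the supremum over $i\in\{1,\dots,N\}$, which turns the empirical average $\frac{1}{N}\sum_{i}$ into $\sup_{i}$ and collapses it together with the single-particle term, leaving a closed inequality of the form $\sup_{i}\mathbb{E}\sup_{0\leq t\leq T}|Y^{i}_{N}(t)|^{2p}\leq 2\sup_{i}\mathbb{E}|Y^{i}_{N}(0)|^{2p}+C\big(1+\int_{0}^{T}\sup_{i}\mathbb{E}\sup_{0\leq u\leq s}|Y^{i}_{N}(u)|^{2p}\,ds\big)$. Grönwall's lemma then gives the bound $K_{d}$ independent of $N$ and $h$; applying Grönwall is legitimate because the a priori (possibly $h$-dependent) finiteness (\ref{cbo_neweq_4.45}) guarantees that the quantity being estimated is finite, exactly as (\ref{cbo_eqn_3.2}) was used in Lemma~\ref{cbolemma3.3}.

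The main obstacle is precisely the coefficient freezing: one must ensure that replacing $Y^{i}_{N}(s)$ by $Y^{i}_{N}(\kappa_{h}(s))$ in the coefficients introduces no $h$-dependent constant and no uncontrolled one-step increment $Y^{i}_{N}(s)-Y^{i}_{N}(\kappa_{h}(s))$. The resolution relies on two structural features of the model: the coefficients $F^{i}_{N},G^{i}_{N}$ have only linear growth (Lemma~\ref{cbolemma3.1}), so no super-linear amplification of the frozen values occurs, and the monotonicity $\kappa_{h}(s)\leq s$ lets me bound all frozen quantities directly by running suprema without ever estimating the increment over a step. Everything else is a routine repetition of the computations in Lemma~\ref{cbolemma3.3}.
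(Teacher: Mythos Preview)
Your proposal is correct and follows essentially the same approach as the paper: apply It\^{o}'s formula to $|Y^{i}_{N}(t)|^{2p}$, estimate the drift, diffusion-correction, stochastic-integral, and jump terms exactly as in Lemma~\ref{cbolemma3.3} (Young, BDG with generalized Young and H\"{o}lder, Cauchy--Bunyakowsky--Schwarz), invoke (\ref{y4.2}) at the frozen configuration to control $|\bar{Y}_{N}(\kappa_{h}(s))|^{2p}$, bound frozen quantities by running suprema via $\kappa_{h}(s)\leq s$, take the supremum over $i$, and close with Gr\"{o}nwall justified by the a priori bound (\ref{cbo_neweq_4.45}). The paper carries out precisely this argument, so there is no substantive difference in strategy.
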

\begin{proof}
Let $p$ be a positive integer. Using Ito's formula, the Cauchy-Bunyakowsky-Schwartz inequality and Young's inequality, we have
\begin{align*}
    |Y^{i}_{N}&(t)|^{2p} = |Y^{i}_{N}(0)|^{2p} - 2p\int_{0}^{t}\beta(s)|Y^{i}_{N}(s)|^{2p-2}\big(Y^{i}_{N}(s)\cdot (Y^{i}_{N}(\kappa_{h}(s)) - \bar{Y}_{N}(\kappa_{h}(s)))\big)ds \\
    & \;\;\;\; + 2\sqrt{2}p\int_{0}^{t}\sigma(s)|Y^{i}_{N}(s)|^{2p-2}\big(Y^{i}_{N}(s)\cdot \diag(Y^{i}_{N}(\kappa_{h}(s)) - \bar{Y}_{N}(\kappa_{h}(s)))dW^{i}(s)\big)\\ & \;\;\;\; + 4p(p-1)\int_{0}^{t}\sigma^{2}(s)|Y^{i}_{N}(s)|^{2p-4}|\diag(Y^{i}_{N}(\kappa_{h}(s)) - \bar{Y}_{N}(\kappa_{h}(s))) Y^{i}_{N}(s)|^{2}ds\\ & \;\;\;\;+ 2p\int_{0}^{t}\sigma^{2}(s)|Y^{i}_{N}(s)|^{2p-2}|\diag(Y^{i}_{N}(\kappa_{h}(s)) - \bar{Y}_{N}(s)|^{2}ds \\ & \;\;\;\; + \int_{0}^{t}\int_{\mathbb{R}^{d}}\Big(|Y^{i}_{N}(s^{-}) + \gamma(s)\diag(Y^{i}_{N}(\kappa_{h}(s)) - \bar{Y}_{N}(\kappa_{h}(s)))z|^{2p} - |Y^{i}_{N}(s^{-})|^{2p}\Big)\mathcal{N}^{i}(ds,dz)
    \\ & \leq |Y^{i}_{N}(0)|^{2p} + C \int_{0}^{t}(|Y^{i}_{N}(s)|^{2p} + |Y^{i}_{N}(\kappa_{h}(s))|^{2p}+|\bar{Y}_{N}(\kappa_{h}(s))|^{2p})ds\\ &\;\;\;\; + 2\sqrt{2}p\int_{0}^{t}\sigma(s)|Y^{i}_{N}(s)|^{2p-2}(Y^{i}_{N}(s)\cdot\diag(Y^{i}_{N}(\kappa_{h}(s)) - \bar{Y}_{N}(\kappa_{h}(s)))dW^{i}(s)) \\
    & \;\;\;\;+ C\int_{0}^{t}\int_{\mathbb{R}^{d}}\Big(|Y^{i}_{N}(s^{-})|^{2p} + (|Y^{i}_{N}(\kappa_{h}(s))|^{2p} + |\bar{Y}_{N}(\kappa_{h}(s))|^{2p})(1+|z|^{2p})\Big)\mathcal{N}^{i}(ds,dz).
\end{align*}
First taking supremum over $0\leq t\leq T$ and then expectation, we obtain
\begin{align*}
    \mathbb{E}&\sup_{0\leq t\leq T}|Y^{i}_{N}(t)|^{2p} \leq  \mathbb{E}|Y^{i}_{N}(0)|^{2p} + C\mathbb{E}\int_{0}^{T}\Big(|Y^{i}_{N}(s)|^{2p} + |Y^{i}_{N}(\kappa_{h}(s))|^{2p} + |\bar{Y}_{N}(\kappa_{h}(s))|^{2p}\Big) ds \\ & +2\sqrt{2}p\mathbb{E}\sup_{0\leq t\leq T}\bigg|\int_{0}^{t}\sigma(s)|Y^{i}_{N}(s)|^{2p-2}(Y^{i}_{N}(s)\cdot \diag(Y^{i}_{N}(\kappa_{h}(s))-\bar{Y}_{N}(\kappa_{h}(s)))dW^{i}(s))\bigg| \\ & +C\mathbb{E}\int_{0}^{T}\int_{\mathbb{R}^{d}}\Big(|Y^{i}_{N}(s^{-})|^{2p} + (|Y^{i}_{N}(\kappa_{h}(s))|^{2p} + |\bar{Y}_{N}(\kappa_{h}(s))|^{2p})(1+|z|^{2p})\Big)\mathcal{N}^{i}(ds,dz),
\end{align*}
where $C$ is independent of $h$ and $N$.
Using the Burkholder-Davis-Gundy inequality (note that we can apply this inequality due to (\ref{cbo_neweq_4.45})) and the fact that $\mathbb{E}|Z|^{2p} < \infty$, we get
\begin{align*}
    \mathbb{E}&\sup_{0\leq t\leq T}|Y^{i}_{N}(t)|^{2p} \leq  \mathbb{E}|Y^{i}_{N}(0)|^{2p} + C\mathbb{E}\int_{0}^{T}\Big(|Y^{i}_{N}(s)|^{2p} + |Y^{i}_{N}(\kappa_{h}(s))|^{2p} + |\bar{Y}_{N}(\kappa_{h}(s))|^{2p}\Big) ds \\ & \;\;\;\;+C\mathbb{E}\bigg(\int_{0}^{T}|Y^{i}_{N}(s)|^{4p-4}\big(Y^{i}_{N}(s)\cdot(Y^{i}_{N}(\kappa_{h}(s))-\bar{Y}_{N}(\kappa_{h}(s)))\big)^{2}ds\bigg)^{1/2} \\ & \;\;\;\;+C\mathbb{E}\int_{0}^{T}\int_{\mathbb{R}^{d}}\Big(|Y^{i}_{N}(s)|^{2p} + (|Y^{i}_{N}(\kappa_{h}(s))|^{2p} + |\bar{Y}_{N}(\kappa_{h}(s))|^{2p})(1+|z|^{2p})\Big)\rho_{z}(z)dzds
    \\  &\leq  \mathbb{E}|Y^{i}_{N}(0)|^{2p} + C\mathbb{E}\int_{0}^{T}\Big(|Y^{i}_{N}(s)|^{2p} + |Y^{i}_{N}(\kappa_{h}(s))|^{2p} + |\bar{Y}_{N}(\kappa_{h}(s))|^{2p}\Big) ds \\ &\;\;\;\; +\mathbb{E}\sup_{0\leq t\leq T}|Y^{i}_{N}(t)|^{2p-1}\bigg(\int_{0}^{T}|Y^{i}_{N}(\kappa_{h}(s))-\bar{Y}_{N}(\kappa_{h}(s))|^{2}ds\bigg)^{1/2}.
\end{align*}
Applying Young's inequality  and Holder's inequality,
we ascertain
\begin{align}
    &\mathbb{E}\sup\limits_{0\leq t\leq T}|Y^{i}_{N}(t)|^{2p} \leq \mathbb{E}|Y_{N}^{i}(0)|^{2p} + C\int_{0}^{T}(|Y^{i}_{N}(s)|^{2p} + |Y^{i}_{N}(\kappa_{h}(s))|^{2p} + |\bar{Y}_{N}(\kappa_{h}(s))|^{2p}) ds \nonumber \\ & \;\;\;\; + \frac{1}{2}\mathbb{E}\sup\limits_{0\leq t\leq T}|Y^{i}_{N}(t)|^{2p} + C\mathbb{E}\Big(\int_{0}^{T}|Y^{i}_{N}(\kappa_{h}(s)) - \bar{Y}_{N}(\kappa_{h}(s))|^{2}ds\Big)^{p} \nonumber\\ &  \leq \mathbb{E}|Y_{N}^{i}(0)|^{2p} + C\int_{0}^{t}(|Y^{i}_{N}(s)|^{2p} + |Y^{i}_{N}(\kappa_{h}(s))|^{2p} + |\bar{Y}_{N}(\kappa_{h}(s))|^{2p}) ds \nonumber \\ & \;\;\;\; + \frac{1}{2}\mathbb{E}\sup\limits_{0\leq t\leq T}|Y^{i}_{N}(t)|^{2p} + C\mathbb{E}\int_{0}^{T}|Y^{i}_{N}(\kappa_{h}(s)) - \bar{Y}_{N}(\kappa_{h}(s))|^{2p}ds. \label{cbo_neweq_4.47}
\end{align}
Using Jensen's inequality and (\ref{y4.2}), we have
\begin{align}
    |\bar{Y}_{N}(\kappa_{h}(s))|^{2} &\leq \sum\limits_{i=1}^{N}|Y^{i}_{N}(\kappa_{h}(s))|^{2}\frac{e^{-\alpha f(Y^{i}_{N}(\kappa_{h}(s)))}}{\sum_{j=1}^{N}e^{-\alpha f(Y^{j}_{N}(\kappa_{h}(s)))}} \leq L_{1} + \frac{L_{2}}{N}\sum\limits_{i=1}^{N}|Y^{i}_{N}(\kappa_{h}(s))|^{2}. \label{cbo_neweq_4.48}
\end{align}
Therefore, substituting (\ref{cbo_neweq_4.48}) in (\ref{cbo_neweq_4.47}) yields
\begin{align*}
    &\mathbb{E}\sup\limits_{0\leq t\leq T}|Y^{i}_{N}(t)|^{2p} \leq 2\mathbb{E}|Y_{N}^{i}(0)|^{2p} + C +  C\mathbb{E}\int_{0}^{T}\Big(|Y^{i}_{N}(s)|^{2p} + |Y^{i}_{N}(\kappa_{h}(s))|^{p} +  \frac{1}{N}\sum\limits_{i=1}^{N}|Y_{N}^{i}(\kappa_{h}(s))|^{2p}\Big)ds \\ & \leq 2\mathbb{E}|Y_{N}^{i}(0)|^{2p} +C+ C\int_{0}^{T}\Big(\mathbb{E}\sup_{0\leq u\leq s}|Y^{i}_{N}(u)|^{2p}  + \frac{1}{N}\sum\limits_{i=1}^{N}\mathbb{E}\sup_{0\leq u\leq s} |Y_{N}^{i}(u)|^{2p}\Big)ds,
\end{align*}
where $C>0$ is independent of $h$ and $N$. 
Taking supremum over $ i =1,\dots, N$, we get
\begin{align*}
     \sup\limits_{i=1,\dots,N}\mathbb{E}\sup\limits_{0\leq t\leq T}|Y^{i}_{N}(t)|^{2p}  \leq 2\mathbb{E}|Y^{i}_{N}(0)|^{2p}+ C + C\int_{0}^{T}\sup_{i=1,\dots,N}\mathbb{E}\sup_{0\leq u\leq s}|Y^{i}_{N}(u)|^{2p}ds,
\end{align*}
where $C>0$ is independent of $h$ and $N$.
Using Gr\"{o}nwall's lemma, we have the desired result.
\end{proof}

\begin{lemma}\label{cbo_lem4.7}
Let Assumptions~\ref{cboh3.1}, \ref{cboh3.2}-\ref{cboasm1.4} hold. Let  $\sup_{i=1,\dots,N}\mathbb{E}|X^{i}_{N}(0)|^{4} < \infty$, $ \sup_{i=1,\dots,N} \mathbb{E}|Y^{i}_{N}(0)|^{4} < \infty$, $\mathbb{E}|Z|^{4} < \infty$. Then
\begin{align*}
   \sup_{i=1,\dots,N} \mathbb{E}|Y^{i}_{N}(t) - Y^{i}_{N}(\kappa_{h}(t))|^{2}  \leq Ch,
\end{align*}
where $C$ is a positive constant independent of $N$ and $h$.
\end{lemma}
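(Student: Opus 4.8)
The plan is to exploit that, for $t\in[t_{k},t_{k+1})$, the step function $\kappa_{h}$ is constant on $[\kappa_{h}(t),t]$ with value $t_{k}=\kappa_{h}(t)$, so that the frozen coefficients in the continuous scheme (\ref{cboeq5.20}) are $\mathscr{F}_{\kappa_{h}(t)}$-measurable and do not vary over this interval. Writing the increment in integral form,
\begin{align*}
    Y^{i}_{N}(t)-Y^{i}_{N}(\kappa_{h}(t)) &= -\big(Y^{i}_{N}(\kappa_{h}(t))-\bar{Y}_{N}(\kappa_{h}(t))\big)\int_{\kappa_{h}(t)}^{t}\beta(s)\,ds \\
    &\quad + \sqrt{2}\,\diag\big(Y^{i}_{N}(\kappa_{h}(t))-\bar{Y}_{N}(\kappa_{h}(t))\big)\int_{\kappa_{h}(t)}^{t}\sigma(s)\,dW^{i}(s) \\
    &\quad + \diag\big(Y^{i}_{N}(\kappa_{h}(t))-\bar{Y}_{N}(\kappa_{h}(t))\big)\int_{\kappa_{h}(t)}^{t}\!\!\int_{\mathbb{R}^{d}}z\,\mathcal{N}^{i}(ds,dz),
\end{align*}
I would apply $|a+b+c|^{2}\leq 3(|a|^{2}+|b|^{2}+|c|^{2})$, take expectations, and estimate the three pieces separately; each will turn out to be bounded by a constant times $h$ (or $h^{2}$) multiplied by the common factor $\mathbb{E}|Y^{i}_{N}(\kappa_{h}(t))-\bar{Y}_{N}(\kappa_{h}(t))|^{2}$.

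For the drift piece, since $\beta$ is continuous on $[0,T]$ one has $\int_{\kappa_{h}(t)}^{t}\beta(s)\,ds\leq \beta_{\max}h$, giving a term of order $h^{2}$. For the Wiener piece, the coefficient being $\mathscr{F}_{\kappa_{h}(t)}$-measurable and constant over $[\kappa_{h}(t),t]$ lets me apply It\^{o}'s isometry (conditionally on $\mathscr{F}_{\kappa_{h}(t)}$) to obtain the bound $2\sigma_{\max}^{2}h\,\mathbb{E}|Y^{i}_{N}(\kappa_{h}(t))-\bar{Y}_{N}(\kappa_{h}(t))|^{2}$, using $|\diag(v)|^{2}=|v|^{2}$. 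For the jump piece, the key observation is that $\mathbb{E}(\Zstroke)=0$ forces $\int_{\mathbb{R}^{d}}z\,\nu(dz)=0$, so the Poisson integral coincides with its compensated counterpart and is a martingale; the jump isometry then yields $\mathbb{E}\int_{\kappa_{h}(t)}^{t}\int_{\mathbb{R}^{d}}|\diag(Y^{i}_{N}(\kappa_{h}(t))-\bar{Y}_{N}(\kappa_{h}(t)))z|^{2}\nu(dz)\,ds=\lambda\,\mathbb{E}|\Zstroke|^{2}\,(t-\kappa_{h}(t))\,\mathbb{E}|Y^{i}_{N}(\kappa_{h}(t))-\bar{Y}_{N}(\kappa_{h}(t))|^{2}$, where the diagonal computation is exactly the one used in the proof of Theorem~\ref{cbo_thrm_3.2}; this is again of order $h$.

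It remains to control the common factor uniformly in $N$ and $h$: I would use $\mathbb{E}|Y^{i}_{N}(\kappa_{h}(t))-\bar{Y}_{N}(\kappa_{h}(t))|^{2}\leq 2\mathbb{E}|Y^{i}_{N}(\kappa_{h}(t))|^{2}+2\mathbb{E}|\bar{Y}_{N}(\kappa_{h}(t))|^{2}$ together with the bound (\ref{cbo_neweq_4.48}) on $|\bar{Y}_{N}|^{2}$ and the uniform moment estimate of Lemma~\ref{cbo_lem4.6} (with $p=1$), which give a constant depending on neither $N$ nor $h$. Collecting the three estimates, using $h^{2}\leq Th$ for $h\leq T$, and finally taking the supremum over $i=1,\dots,N$ then delivers the claimed bound $Ch$. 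The only genuinely delicate point is the jump term: one must recognise the martingale structure coming from $\mathbb{E}(\Zstroke)=0$, for only then does the second moment of the jump integral scale like the compensator length $h$ rather than remaining $O(1)$; the remaining estimates are routine Gr\"{o}nwall-free applications of isometries and the already-established moment bounds.
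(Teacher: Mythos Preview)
Your proposal is correct and follows essentially the same route as the paper: both decompose the increment into drift, diffusion, and jump contributions, apply It\^{o}'s isometry to the stochastic integrals (the paper notes explicitly that this is justified by Lemma~\ref{cbo_lem4.6}), and then bound the common factor $\mathbb{E}|Y^{i}_{N}(\kappa_{h}(t))-\bar{Y}_{N}(\kappa_{h}(t))|^{2}$ via (\ref{cbo_neweq_4.48}) and the uniform moment bound of Lemma~\ref{cbo_lem4.6}. The only cosmetic difference is that you pull the frozen, $\mathscr{F}_{\kappa_{h}(t)}$-measurable coefficients outside the stochastic integrals before applying the isometries, whereas the paper leaves everything inside a single $\int_{\kappa_{h}(t)}^{t}\mathbb{E}|\cdot|^{2}\,ds$ estimate; since $\kappa_{h}(s)=\kappa_{h}(t)$ on that interval, the two presentations are equivalent.
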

\begin{proof}
We have
\begin{align*}
    |Y^{i}_{N}(t) &- Y^{i}_{N}(\kappa_{h}(t))|^{2} \leq C\bigg(\bigg|\int_{\kappa_{h}(t)}^{t}( Y^{i}_{N}(\kappa_{h}(s)) - \bar{Y}_{N}(\kappa_{h}(s)))ds\bigg|^{2} \\ & \;\;\;\; +    \bigg| \int_{\kappa_{h}(t)}^{t}\diag ( Y^{i}_{N}(\kappa_{h}(s)) - \bar{Y}_{N}(\kappa_{h}(s)))dW^{i}(s)\bigg|^{2}\\ & \;\;\;\; + \bigg|\int_{\kappa_{h}(t)}^{t}\int_{\mathbb{R}^{d}}\diag( Y^{i}_{N}(\kappa_{h}(s)) - \bar{Y}_{N}(\kappa_{h}(s)))z^{}\mathcal{N}^{i}(ds,dz^{})\bigg|^{2}\bigg),
\end{align*}
where $C$ is independent of $h$ and $N$.
Taking expectation and using Ito's isometry (note that we can apply Ito's isometry due to Lemma~\ref{cbo_lem4.6}), we get
\begin{align*}
    \mathbb{E}|Y^{i}_{N}(t) &- Y^{i}_{N}(\kappa_{h}(t))|^{2} \leq C(1+\mathbb{E}|Z|^{2})\bigg(\int_{\kappa_{h}(t)}^{t}\mathbb{E}| Y^{i}_{N}(\kappa_{h}(s)) - \bar{Y}_{N}(\kappa_{h}(s))|^{2}ds\bigg).
\end{align*}
Therefore, use of (\ref{cbo_neweq_4.48}) gives  
\begin{align*}
 \sup_{i=1,\dots,N}\mathbb{E}&|Y^{i}_{N}(t) - Y^{i}_{N}(\kappa_{h}(t))|^{2} \leq C(1+\mathbb{E}|Z|^{2})\bigg(\int_{\kappa_{h}(t)}^{t} \sup_{i=1,\dots,N}\mathbb{E}|Y^{i}_{N}(\kappa_{h}(s))|^{2} \\ &\;\;\;\; + 2L_{1} + \frac{L_{2}}{N}\sum\limits_{i=1}^{N}\sup_{i=1,\dots,N}\big( \mathbb{E}|Y^{i}_{N}(\kappa_{h}(s))|^{2})ds\bigg).
 \end{align*}
 Using Lemma~\ref{cbolemma3.3} and Lemma~\ref{cbo_lem4.6}, we get
 \begin{align*}
   \sup_{i=1,\dots,N} \mathbb{E}|Y^{i}_{N}(t) - Y^{i}_{N}(\kappa_{h}(t))|^{2} \leq C(t -\kappa_{h}(t)) \leq Ch,  
 \end{align*}
 where $C$ is independent of $N$ and $h$.
\end{proof}

\begin{theorem}
Let Assumptions~\ref{cboh3.1}, \ref{cboh3.2}-\ref{cboasm1.4} hold. Let $\mathbb{E}|Z|^{4} < \infty$, $\sup_{i=1,\dots,N}\mathbb{E}|X^{i}_{N}(0)|^{4} < \infty$, $ \sup_{i=1,\dots,N} \mathbb{E}|Y^{i}_{N}(0)|^{4} < \infty$ and $Y^{i}_{N}(0) = X^{i}_{N}(0) $, $i=1,\dots, N$.  Then
\begin{align}
    \lim\limits_{h \rightarrow 0}\lim\limits_{N\rightarrow \infty}\sup_{i=1,\dots,N}\mathbb{E}|Y^{i}_{N}(t) - X^{i}_{N}(t)|^{2} = \lim\limits_{N \rightarrow \infty}\lim\limits_{h\rightarrow 0}\sup_{i=1,\dots,N}\mathbb{E}|Y^{i}_{N}(t) - X^{i}_{N}(t)|^{2}= 0,
\end{align}
for all $t \in [0,T]$.
\end{theorem}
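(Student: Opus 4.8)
The plan is to transfer the localization-and-partition argument of Theorem~\ref{cbo_thrm4.5} from the pair (particle system, mean-field limit) to the pair $(Y^i_N,X^i_N)$, which is in fact more convenient here because $Y^i_N$ and $X^i_N$ are driven by the \emph{same} Wiener processes $W^i$ and Poisson measures $\mathcal{N}^i$; the only new feature is that the coefficients of $Y^i_N$ are frozen at $\kappa_h(s)$. Writing $e^i(s):=Y^i_N(s)-X^i_N(s)$ (so that $e^i(0)=0$), I would reuse the stopping times $\tau_{1,R},\tau_{2,R},\tau_R$ and the events $\Omega_1(t),\Omega_2(t)$ of (\ref{cbo_neweq_4.23})--(\ref{cbo_eq_4.21}), with $X^i(s)$ replaced by $Y^i_N(s)$ in $\tau_{2,R}$, and split $\mathbb{E}|e^i(t)|^2=\mathbb{E}(|e^i(t)|^2 I_{\Omega_1(t)})+\mathbb{E}(|e^i(t)|^2 I_{\Omega_2(t)})=:E_1(t)+E_2(t)$. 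For $E_1$, Cauchy--Schwarz together with the uniform fourth-moment bounds of Lemma~\ref{cbolemma3.3} and Lemma~\ref{cbo_lem4.6} gives $\mathbb{E}|e^i(t)|^4\le C$, while Chebyshev's inequality yields $\mathbb{P}(\Omega_1(t))\le C/R$, both constants independent of $N$ and $h$; hence $\sup_i E_1(t)\le C/\sqrt{R}$.

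For $E_2(t)\le\sup_i\mathbb{E}|e^i(t\wedge\tau_R)|^2$ I would apply Ito's formula to $|e^i(t\wedge\tau_R)|^2$. After taking expectations the $dW^i$-martingale vanishes, and the $\mathcal{N}^i$-term is converted to an integral against $\rho_z(z)\,dz\,ds$ via Doob's optional stopping theorem, the cross term then dropping out because $\int_{\mathbb{R}^d}z\,\rho_z(z)\,dz=0$, so that every remaining contribution is dominated by $C\,\mathbb{E}\int_0^{t\wedge\tau_R}|\Delta^i(s)|^2\,ds$, where $\Delta^i(s):=\big(Y^i_N(\kappa_h(s))-\bar Y_N(\kappa_h(s))\big)-\big(X^i_N(s)-\bar X_N(s)\big)$. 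I would then decompose
\begin{align*}
\Delta^i(s)=\big[Y^i_N(\kappa_h(s))-Y^i_N(s)\big]+e^i(s)-\big[\bar Y_N(\kappa_h(s))-\bar Y_N(s)\big]-\big[\bar Y_N(s)-\bar X_N(s)\big].
\end{align*}
The first bracket is $O(h)$ in mean square by Lemma~\ref{cbo_lem4.7}; the two weighted-average brackets are estimated by repeating verbatim the explicit Jensen/H\"{o}lder expansion leading to (\ref{cbo_neweq_4.28})--(\ref{cbo_eq_4.23}), which on $\{s\le\tau_R\}$ gives $|\bar Y_N(s)-\bar X_N(s)|^2\le C(R)\,\tfrac1N\sum_j|e^j(s)|^2$ and $\mathbb{E}\int_0^{t\wedge\tau_R}|\bar Y_N(\kappa_h(s))-\bar Y_N(s)|^2\,ds\le C(R)\,h$ (using Lemma~\ref{cbo_lem4.7} once more), with $C(R)=CR\,e^{C\sqrt{R}}$ independent of $N$ and $h$.

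Collecting the estimates and using $e^i(0)=0$ leaves
\begin{align*}
\sup_i\mathbb{E}|e^i(t\wedge\tau_R)|^2\le C(R)\int_0^t\sup_i\mathbb{E}|e^i(s\wedge\tau_R)|^2\,ds+C(R)\,h,
\end{align*}
and Gr\"{o}nwall's lemma gives $\sup_i\mathbb{E}|e^i(t\wedge\tau_R)|^2\le \tilde C(R)\,h$ with $\tilde C(R)$ independent of $N$ and $h$. Combining with the $E_1$-bound yields $\sup_i\mathbb{E}|e^i(t)|^2\le C/\sqrt{R}+\tilde C(R)\,h$ for every $R>0$ and every $N,h$. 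Since this bound is independent of $N$ and holds for all $R$, sending $N\to\infty$ then $h\to0$ (first iterated limit), or $h\to0$ then $N\to\infty$ (second iterated limit), and finally $R\to\infty$, makes both double limits vanish.

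The main obstacle is precisely the non-global-Lipschitz dependence of the weighted averages $\bar Y_N,\bar X_N$ on the particle configuration: the comparison $|\bar Y_N(s)-\bar X_N(s)|$ cannot be controlled by $\tfrac1N\sum_j|e^j(s)|$ with an $N$-independent constant without localizing on $\{s\le\tau_R\}$, and the delicate point is to keep \emph{every} constant independent of $N$ (this is where the uniform moment bounds of Lemmas~\ref{cbolemma3.3} and \ref{cbo_lem4.6} are essential) while absorbing the exponential factor $e^{C\sqrt{R}}$ into a single Gr\"{o}nwall step and sending $R\to\infty$ only at the very end.
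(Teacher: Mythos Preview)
Your proposal is correct and follows essentially the same route as the paper: the same stopping times based on $\frac{1}{N}\sum_i|X^i_N|^4$ and $\frac{1}{N}\sum_i|Y^i_N|^4$, the same $E_1/E_2$ split via Cauchy--Schwarz and Chebyshev on the bad set, the same Ito computation on the good set, and the same four-term decomposition of $\Delta^i(s)$ handled by Lemma~\ref{cbo_lem4.7} together with the weighted-average estimate (\ref{cbo_neweq_4.28}). The one genuine difference is the endgame: the paper couples $R$ to $h$ explicitly, setting $R=\tfrac{1}{C_u^2}(\ln\ln(h^{-1/2}))^2$, which turns the Gr\"{o}nwall bound $Ce^{e^{C_u\sqrt{R}}}h$ into $Ch^{1/2}$ and the bad-set bound into $C/(\ln\ln(h^{-1/2}))^2$, thereby obtaining explicit (if very slow) rates in $h$ uniformly in $N$; you instead keep $R$ free, observe that the combined bound $C/\sqrt{R}+\tilde C(R)h$ is independent of $N$, and send $R\to\infty$ after the iterated limits. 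Your route is cleaner for the bare convergence statement being proved here, while the paper's choice of $R(h)$ additionally delivers a quantitative rate.
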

\begin{proof}
Let
\begin{align*}
   \tau_{1.R} = \inf\Big\{ t\geq 0 \; ; \; \frac{1}{N}\sum\limits_{i=1}^{N}|X^{i}_{N}(t)|^{4} \geq R\Big\}&,\;\;\;\; 
   \tau_{3,R} = \inf\Big\{ t \geq 0\; ; \; \frac{1}{N}\sum\limits_{i=1}^{N}|Y^{i}_{N}(t)|^{4} \geq R \Big\}, \\ 
   \tau^{h}_{R} & = \tau_{1,R} \wedge \tau_{3,R},
\end{align*}
and
\begin{align*}
    \Omega_{3}(t) & = \{ \tau_{1,R} \leq t\} \cup \{ \tau_{3,R} \leq t\}, \;\;\;
    \Omega_{4}(t) = \Omega \backslash \Omega_{3}(t) =  \{ \tau_{1,R} \geq t\} \cap \{ \tau_{3,R} \geq  t\} .
\end{align*}
We have
\begin{align*}
    \mathbb{E}|Y^{i}_{N}(t) - X^{i}_{N}(t)|^{2} &=  \mathbb{E}\big(|Y^{i}_{N}(t) - X^{i}_{N}(t)|^{2}I_{\Omega_{3}(t)}\big) \nonumber 
        +  \mathbb{E}\big(|Y^{i}_{N}(t) - X^{i}_{N}(t)|^{2}I_{\Omega_{4}(t)}\big)\\ & =: E_{3}(t) + E_{4}(t).
\end{align*}
Let us first estimate the term $E_{3}(t)$. Using Cauchy-Bunyakowsky-Schwartz inequality, Chebyshev's inequality, Lemma~\ref{cbolemma3.3} and Lemma~\ref{cbo_lem4.6}, we get
\begin{align}
    \mathbb{E}\big(|Y^{i}_{N}(t) - X^{i}_{N}(t)|^{2}I_{\Omega_{3}(t)}\big) &\leq \big(\mathbb{E}|Y^{i}_{N}(t) - X^{i}_{N}(t)|^{4}\big)^{1/2}\big(\mathbb{E}I_{\Omega_{3}(t)}\big)^{1/2} \nonumber \\ &\leq C \bigg( \frac{1}{RN}\sum\limits_{i=1}^{N}\mathbb{E}\sup_{0\leq s\leq t}|Y^{i}_{N}(s)|^{4} + \frac{1}{RN}\sum\limits_{i=1}^{N}\mathbb{E}\sup_{0\leq s\leq t}|X^{i}_{N}(s)|^{4} \bigg) \leq \frac{C}{R},\label{cbo_neweq_4.49}
\end{align}
where $C$ is independent of $h$, $N$ and $R$.

Note that $ \mathbb{E}\big(|Y^{i}_{N}(t) - X^{i}_{N}(t)|^{2}I_{\Omega_{4}(t)}\big) \leq  \mathbb{E}|Y^{i}_{N}(t \wedge \tau^{h}_{R}) - X^{i}_{N}(t \wedge \tau^{h}_{R})|^{2} $. Using Ito's formula, we obtain
\begin{align*}
    &|Y^{i}_{N}(t \wedge \tau^{h}_{R}) - X^{i}_{N}(t \wedge \tau^{h}_{R})|^{2} = |Y^{i}_{N}(0) - X^{i}_{N}(0)|^{2} \\&  - 2\int_{0}^{t\wedge \tau^{h}_{R}} \beta(s)\big((Y^{i}_{N}(s) - X^{i}_{N}(s))\cdot (Y^{i}_{N}(\kappa_{h}(s)) - \bar{Y}_{N}(\kappa_{h}(s)) - X^{i}_{N}(s) + \bar{X}_{N}(s))\big)ds \\ & +2 \sqrt{2}\int_{0}^{t \wedge \tau^{h}_{R}}\sigma(s)\big((Y^{i}_{N}(s) - X^{i}_{N}(s))\cdot \diag(Y^{i}_{N}(\kappa_{h}(s)) - \bar{Y}_{N}(\kappa_{h}(s)) - X^{i}_{N}(s) + \bar{X}_{N}(s))dW^{i}(s)\big)\\ & +
    2\int_{0}^{t\wedge \tau^{h}_{R}}\sigma^{2}(s)|Y^{i}_{N}(\kappa_{h}(s))- \bar{Y}_{N}(\kappa_{h}(s)) - X^{i}_{N}(s) + \bar{X}_{N}(s)|^{2} ds \\ &
    + \int_{0}^{t\wedge \tau^{h}_{R}}\int_{\mathbb{R}^{d}}\big(|Y^{i}_{N}(s^{-}) - X^{i}_{N}(s^{-}) + \diag(Y^{i}_{N}(\kappa_{h}(s)) - \bar{Y}_{N}(\kappa_{h}(s)))z - \diag(X^{i}_{N}(s) - \bar{X}_{N}(s))z|^{2} \\ & \;\;\;\;\;\;- |Y^{i}_{N}(s^{-}) - X^{i}_{N}(s^{-})|^{2}\big)\mathcal{N}^{i}(ds,dz).
\end{align*}
Taking expectation on both sides, and using the Cauchy-Bunyakowsky-Schwartz inequality, Young's inequality, Ito's isometry (note that we can apply Ito's isometry due to Lemma~\ref{cbo_lem4.6}) and Doob's optional stopping theorem \cite[Theorem 2.2.1]{cbos11}, we get
\begin{align}
    \mathbb{E}\big(|Y^{i}_{N}(t\wedge \tau^{h}_{R}) - X^{i}_{N}(t\wedge \tau^{h}_{R})|^{2}\big) &\leq  C h +C(1+|z|^{2})\mathbb{E}\int_{0}^{t\wedge \tau^{h}_{R}}\Big(|Y^{i}_{N}(\kappa_{h}(s)) - X^{i}_{N}(s)|^{2} \nonumber  \\ & \;\;\;\; \;\;\;\;\;\;\;+ |\bar{Y}_{N}(\kappa_{h}(s)) - \bar{X}_{N}(s)|^{2}\Big)ds \nonumber \\ &  \leq C\mathbb{E}\int_{0}^{t\wedge \tau^{h}_{R}} \Big(| Y^{i}_{N}(\kappa_{h}(s)) - Y^{i}_{N}(s)|^{2}+|Y^{i}_{N}(s) - X^{i}_{N}(s)|^{2} \nonumber  \\ & \;\;\;\; \;\;\;\;\;\;\;+ |  \bar{Y}_{N}(\kappa_{h}(s))-\bar{Y}_{N}(s)|^{2}   + |\bar{Y}_{N}(s) - \bar{X}_{N}(s)|^{2}\Big) ds. \label{cbo_eq4.30} 
\end{align}
Due to Lemma~\ref{cbo_lem4.7}, we have
\begin{align}
    \sup_{i=1,\dots,N}\mathbb{E}|Y^{i}_{N}(\kappa_{h}(s)) - Y^{i}_{N}(s )|^{2} \leq Ch, \label{cbo_eq4.31}
\end{align}
where $C$ is independent of $h$ and $N$.

Now, we will estimate the term $|\bar{Y}_{N}(s) - \bar{Y}_{N}(\kappa_{h}(s))|  $. Recall that we used discrete Jensen's inequality, Assumptions~\ref{cboh3.2}-\ref{cboassu3.4} and Cauchy-Bunyakowsky-Schwartz inequality to obtain (\ref{cbo_neweq_4.28}). We apply the same set of arguments as before to get 
\begin{align*}
|\bar{Y}_{N}(s)& - \bar{Y}_{N}(\kappa_{h}(s))| = \bigg|\sum\limits_{i=1}^{N}Y^{i}_{N}(s)\frac{e^{-\alpha f(Y^{i}_{N}(s))}}{\sum_{j=1}^{N}e^{-\alpha f(Y^{j}_{N}(s))}} -\sum\limits_{i=1}^{N}Y^{i}_{N}(\kappa_{h}(s))\frac{e^{-\alpha f(Y^{i}_{N}(\kappa_{h}(s)))}}{\sum_{j=1}^{N}e^{-\alpha f(Y^{j}_{N}(\kappa_{h}(s)))}}\bigg|
\\ & \leq \frac{1}{\frac{1}{N}\sum_{j=1}^{N}e^{-\alpha f(Y^{j}_{N}(s))}}\bigg|\frac{1}{N}\sum\limits_{i=1}^{N} \big(Y^{i}_{N}(s) - Y^{i}_{N}(\kappa_{h}(s))\big)e^{-\alpha f(Y^{i}_{N}(s))}\bigg| \\ & \;\;\;\; 
 +\frac{1}{\frac{1}{N}\sum_{j=1}^{N}e^{-\alpha f(Y^{j}_{N}(s))}}\bigg|\frac{1}{N}\sum\limits_{i=1}^{N}Y^{i}_{N}(\kappa_{h}(s))\Big(e^{-\alpha f(Y^{i}_{N}(s))} - e^{-\alpha f(Y^{i}_{N}(\kappa_{h}(s)))}\Big)\bigg|\\ & \;\;\;\;+ \bigg|\frac{1}{N}\sum\limits_{i=1}^{N}Y^{i}_{N}(\kappa_{h}(s))e^{-\alpha f(Y^{i}_{N}(\kappa_{h}(s)))}\bigg(\frac{1}{\frac{1}{N}\sum_{j=1}^{N}e^{-\alpha f(Y^{j}_{N}(s))}} - \frac{1}{\frac{1}{N}\sum_{j=1}^{N}e^{-\alpha f(Y^{j}_{N}(\kappa_{h}(s)))}}\bigg)\bigg| 
 \\ & \leq C\Bigg(e^{\frac{\alpha K_{u}}{N}\sum_{j=1}^{N}|Y^{j}_{N}(s)|^{2}}\frac{1}{N}\sum\limits_{i=1}^{N}|Y^{i}_{N}(s) - Y^{i}_{N}(\kappa_{h}(s))| \\ & \;\;\;\; +e^{\frac{\alpha K_{u}}{N}\sum_{j=1}^{N}(|Y^{j}_{N}(s)|^{2} + |Y^{j}_{N}(\kappa_{h}(s))|^{2})} \times\bigg(\frac{1}{N}\sum\limits_{i=1}^{N}(1+ |Y^{i}_{N}(s)|^{2} + |Y^{i}_{N}(\kappa_{h}(s))|^{2})^{2}\bigg)^{1/2}\\ & \;\;\;\;\; \times\bigg(\frac{1}{N}\sum\limits_{i=1}^{N}|Y^{i}_{N}(s) - Y^{i}_{N}(\kappa_{h}(s))|^{2}\bigg)^{1/2}\Bigg), 
 \end{align*}
 where $C > 0$ is independent of $h$ and $N$.
Squaring both sides, we ascertain
\begin{align}
&|\bar{Y}_{N}(s) - \bar{Y}_{N}(\kappa_{h}(s))|^{2} \leq C\Bigg( e^{\frac{2\alpha K_{u}}{N}\sum_{j=1}^{N}|Y^{j}_{N}(s)|^{2}}\frac{1}{N}\sum\limits_{i=1}^{N}|Y^{i}_{N}(s) - Y^{i}_{N}(\kappa_{h}(s))|^{2} \nonumber \\ & \;\;\;\; +e^{\frac{2\alpha K_{u}}{N}\sum_{j=1}^{N}(|Y^{j}_{N}(s)|^{2} + |Y^{j}_{N}(\kappa_{h}(s))|^{2})} \times\bigg(\frac{1}{N}\sum\limits_{i=1}^{N}(1+ |Y^{i}_{N}(s)|^{2} + |Y^{i}_{N}(\kappa_{h}(s))|^{2})^{2}\bigg) \nonumber \\ & \;\;\;\;\; \times\bigg(\frac{1}{N}\sum\limits_{i=1}^{N}|Y^{i}_{N}(s) - Y^{i}_{N}(\kappa_{h}(s))|^{2}\bigg)\Bigg). \label{cbo_eq4.32}
\end{align}
In the similar manner, we can obtain the following bound:
\begin{align}
    &|\bar{X}_{N}(s) - \bar{Y}_{N}(s)|^{2} \leq C\Bigg(e^{ \frac{2\alpha K_{u}}{N}\sum_{j=1}^{N}|X^{j}_{N}(s)|^{2}}\frac{1}{N}\sum\limits_{i=1}^{N}|X^{i}_{N}(s) - Y^{i}_{N}(s)|^{2} \nonumber \\ & \;\;\;\; +e^{\frac{2\alpha K_{u}}{N}\sum_{j=1}^{N}(|X^{j}_{N}(s)|^{2} + |Y^{j}_{N}(s)|^{2})} \times\bigg(\frac{1}{N}\sum\limits_{i=1}^{N}(1+ |X^{i}_{N}(s)|^{2} + |Y^{i}_{N}(s)|^{2})^{2}\bigg) \nonumber \\ & \;\;\;\;\; \times\bigg(\frac{1}{N}\sum\limits_{i=1}^{N}|X^{i}_{N}(s) - Y^{i}_{N}(s)|^{2}\bigg) \Bigg), \label{cbo_eq4.33}
\end{align}
where $C>0$ is independent of $h$ and $N$.
We substitute (\ref{cbo_eq4.31}), (\ref{cbo_eq4.32}) and (\ref{cbo_eq4.33}) in (\ref{cbo_eq4.30}) to get
\begin{align*}
&\mathbb{E}\big(|Y^{i}_{N}(t\wedge \tau^{h}_{R}) - X^{i}_{N}(t\wedge \tau^{h}_{R})|^{2}\big)     \leq C\mathbb{E}\int_{0}^{t\wedge \tau_{R}^{h}}\big(|X^{i}_{N}(s) - Y^{i}_{N}(s)|^{2}\big)ds + Ch \\ &  
\;\; +    CRe^{4\alpha K_{u}\sqrt{R}}\bigg( \mathbb{E}\int_{0}^{t\wedge \tau_{R}^{h}}\frac{1}{N}\sum\limits_{i=1}^{N}\big(|Y^{i}_{N}(s) - Y^{i}_{N}(\kappa_{h}(s))|^{2}\big) ds +  \mathbb{E}\int_{0}^{t\wedge \tau_{R}^{h}}\frac{1}{N}\sum\limits_{i=1}^{N}\big(|X^{i}_{N}(s) - Y^{i}_{N}(s)|^{2}\big) ds \bigg) \\ &  \leq C\int_{0}^{t}\mathbb{E}\big(|X^{i}_{N}(s\wedge \tau_{R}^{h}) - Y^{i}_{N}(s\wedge \tau_{R}^{h})|^{2}\big)ds + Ch  +
    CRe^{4\alpha K_{u}\sqrt{R}}\int_{0}^{t}\frac{1}{N}\sum\limits_{i=1}^{N}\mathbb{E}\big(|Y^{i}_{N}(s) - Y^{i}_{N}(\kappa_{h}(s))|^{2}\big) ds \\ & \;\; +  CRe^{4\alpha K_{u}\sqrt{R}}\int_{0}^{t}\frac{1}{N}\sum\limits_{i=1}^{N}\mathbb{E}\big(|X^{i}_{N}(s\wedge \tau_{R}^{h}) - Y^{i}_{N}(s\wedge \tau_{R}^{h})|^{2}\big) ds,
\end{align*}
where $C>0$ is independent of $h$, $N$ and $R$. Taking supremum over $i=1,\dots,N$ and using Lemma~\ref{cbo_lem4.7}, we obtain
\begin{align*}
     \sup_{i=1,\dots,N}\mathbb{E}\big(|Y^{i}_{N}(t\wedge \tau^{h}_{R}) &- X^{i}_{N}(t\wedge \tau^{h}_{R})|^{2}\big)  \leq CRe^{4\alpha K_{u}\sqrt{R}}h \\ &  + CRe^{4\alpha K_{u}\sqrt{R}}\int_{0}^{t}\sup_{i=1,\dots,N}\mathbb{E}\big(|Y^{i}_{N}(s\wedge \tau^{h}_{R}) - X^{i}_{N}(s\wedge \tau^{h}_{R})|^{2}\big)ds \bigg),
\end{align*}
where $C$ is independent of $h$, $N$ and $R$. 
Using Gr\"{o}nwall's lemma, we get
\begin{align*}
 \sup_{i=1,\dots,N}\mathbb{E}\big(|Y^{i}_{N}(t\wedge \tau^{h}_{R}) &- X^{i}_{N}(t\wedge \tau^{h}_{R})|^{2}\big) \leq CRe^{4\alpha K_{u}\sqrt{R}}e^{CRe^{4\alpha K_{u}\sqrt{R}}}h \leq Ce^{e^{C_{u}\sqrt{R}}}h,
 \end{align*}
 where $C>0$ and $C_{u}>0$ are constants independent of $h$, $N$ and $R$. 
 
 We choose $R= \frac{1}{C_{u}^{2}}(\ln{(\ln{(h^{-1/2})})})^{2}$. Consequently, we have
 \begin{align*}
     \sup_{i=1,\dots,N}\mathbb{E}\big(|Y^{i}_{N}(t) - X^{i}_{N}(t)|^{2}I_{\Omega_{4}(t)}\big)\leq  \sup_{i=1,\dots,N}\mathbb{E}\big(|Y^{i}_{N}(t\wedge \tau^{h}_{R}) &- X^{i}_{N}(t\wedge \tau^{h}_{R})|^{2}\big) \leq Ch^{1/2}, 
 \end{align*}
 where $C>0$ is independent of $h$ and $N$. This implies 
 \begin{align}
    \lim\limits_{h \rightarrow 0}\lim\limits_{N \rightarrow \infty} \sup_{i=1,\dots,N}\mathbb{E}\big(|Y^{i}_{N}(t) - X^{i}_{N}(t)|^{2}I_{\Omega_{4}(t)}\big) =  \lim\limits_{N \rightarrow \infty}\lim\limits_{h \rightarrow 0} \sup_{i=1,\dots,N}\mathbb{E}\big(|Y^{i}_{N}(t) - X^{i}_{N}(t)|^{2}I_{\Omega_{4}(t)}\big) = 0. \label{cbo_neweq_4.54}
 \end{align}
 
The term (\ref{cbo_neweq_4.49}) and the choice of $R$ provide the following estimate:
\begin{align*}
    \sup_{i=1,\dots,N}\mathbb{E}\big(|Y^{i}_{N}(t) - X^{i}_{N}(t)|^{2}I_{\Omega_{3}(t)}\big) \leq \frac{C}{(\ln{(\ln{(h^{-1/2})})})^{2}},
\end{align*}
where $C$ is independent of $h$ and $N$. This gives
\begin{align}
     \lim\limits_{h \rightarrow 0}\lim\limits_{N \rightarrow \infty} \sup_{i=1,\dots,N}\mathbb{E}\big(|Y^{i}_{N}(t) - X^{i}_{N}(t)|^{2}I_{\Omega_{3}(t)}\big) =  \lim\limits_{N \rightarrow \infty}\lim\limits_{h \rightarrow 0}\sup_{i=1,\dots,N}\mathbb{E}\big(|Y^{i}_{N}(t) - X^{i}_{N}(t)|^{2}I_{\Omega_{3}(t)}\big) = 0. \label{cbo_neweq_4.55}
\end{align}
As a consequence of (\ref{cbo_neweq_4.54}) and (\ref{cbo_neweq_4.55}), we get
 \begin{align*}
     \lim\limits_{h\rightarrow 0}\lim\limits_{N \rightarrow \infty}\sup_{i=1,\dots,N}\mathbb{E}\big(|Y^{i}_{N}(t) - X^{i}_{N}(t)|^{2}\big)= \lim\limits_{N \rightarrow \infty}\lim\limits_{h \rightarrow 0}\sup_{i=1,\dots,N}\mathbb{E}\big(|Y^{i}_{N}(t) - X^{i}_{N}(t)|^{2}\big) = 0. 
 \end{align*}
\end{proof}



\section{Numerical Examples}\label{cbo_num_exp}

In this section, we conduct  numerical experiments on the Rastrigin and Rosenbrock functions by implementing the models (\ref{cbos1.5}), (\ref{cbos1.6}), (\ref{cbo_neweq_2.17}) and model with common noise introduced in \cite{cbo4, cbo5}. We use the Euler scheme for implementation with $h= 0.01$. We run $100 $ simulations and quote the success rates. We call a run of $N$ particles a success if $|\bar{Y}_{N}(T) - x_{\min}| \leq 0.25$. Defining success rate in this manner is consistent with earlier CBO papers. 
\begin{experiment}
We\; perform\; the\; experiment\; with\; the\; CBO\; model (\ref{cbos1.5}),\; JumpCBO model (\ref{cbos1.6}), JumpCBOwCPN model (jump-diffuison CBO model with common Poisson noise from (\ref{cbo_neweq_2.17})),  CBOwCWN model (CBO model with common Wiener noise of \cite{cbo4,cbo5}) for the Rastrigin  function
\begin{equation}
f(x)= 10 + \sum_{i=1}^{d}\big((x_i - B)^2 - 10\cos(2\pi (x_i - B))\big)/d,
\end{equation} 
where we take $d= 20$. The minimum is located at $(0,\dots,0)\in \mathbb{R}^{20}$. In this experiment for the Rastrigin function, the initial search space is $[-6,6]^{20}$ and final time, $T= 100$. We take  $\beta = 1$, $\sigma = 5.1 $ for CBO, CBOwCWN, JumpCBO and JumpCBOwCPN models. We take $\gamma(t) = 1 $ when $t \leq 20$ and $\gamma(t) = e^{1-t/20}$ when $t > 20$ for JumpCBO and JumpCBOwCPN models.   Also,  $\Zstroke$ is distributed as standard Gaussian random variable and we choose jump intensity, $\lambda$, of Poisson process  equal to $20$.  
\end{experiment}


\captionof{table}{Success rate for $\alpha =20$}\label{exptab8.1}
\begin{center}
\begingroup
\setlength{\tabcolsep}{2.9pt}
\begin{tabular}{c c c c c } 
\hline\hline 
 $N$ & CBO & \shortstack{CBOwCWN}  & JumpCBO & \shortstack{JumpCBOwCPN}\\  
 [0.5ex] 
\hline 
 20 & 53   & 1 & 61 & 65 \\
  50 & 62  & 0 & 69 & 72\\
 80 & 22 & 2 & 41 & 40 \\
 100 &  1 & 2 & 29 & 25 \\
  [1ex]
\hline 
\end{tabular}
\endgroup
\end{center}
 \hfill%
    \captionof{table}{Success rate for $\alpha = 30 $ }\label{exptab8.2}
\begin{center}
\begingroup
\setlength{\tabcolsep}{2.9pt}
\begin{tabular}{c c c c c } 
\hline\hline 
 $N$ &  CBO & \shortstack{CBOwCWN} & JumpCBO & \shortstack{JumpCBOwCPN} \\  
 [0.5ex] 
\hline 
 20 & 87 & 0 & 90 & 94\\
 50 & 99 & 0  & 100 & 100 \\
 80 & 100 & 0 & 100 & 100 \\
 100 & 100  & 0 &  100 & 100 \\
  [1ex]
\hline 
\end{tabular}
\endgroup
\end{center}




 In the case of Rastrigin function, the performance of JumpCBO model (\ref{cbos1.6}), JumpCBOwCPN model (\ref{cbo_neweq_2.17}) and CBO model (\ref{cbos1.5}) is comparable. However, CBOwCWN of \cite{cbo4,cbo5} does not perform well. As the alpha is increased from $20$ to $30$, the success rates are fairly improved. We have taken constant $\beta$ and $\sigma$, and decaying $\gamma$ for the jump-diffusion CBO models. As one can see, jumps have impacted the performance positively in CBO when $\alpha  = 20$. Another fact to be noticed is that performance of the jump-diffusion models with common or independent Poisson processes is very similar. It is also clear from the experiment that CBOwCWN model of \cite{cbo4,cbo5} does not induce enough noise in the dynamics of the particle system  sufficient for effective space exploration.  

\begin{experiment}
We perform the experiment with the CBO model (\ref{cbos1.5}), JumpCBO model (\ref{cbos1.6}), CBOwCN model (CBO model with common noise of \cite{cbo4,cbo5}) for the Rosenbrock function
\begin{equation}
    \sum\limits_{i=1}^{d-1}[100(x_{i+1} - x_{i}^{2})^{2} + (x_{i} - 1)^{2}]/d,
\end{equation}
where we take $d= 5$. The minimum is located at $(1,\dots,1)\in \mathbb{R}^{5}$. In this experiment for the Rosenbrock function, the initial search space is $[-1,3]^5$ and final time, $T= 120$. We take  $\beta = 1$, $\sigma = 5 $ for CBO as well as CBOwCN models. We take $\beta(t) = 2- e^{-t/100}$, $\sigma(t) = 4 + e^{-t/90}$ and $\gamma(t) = 1 $ for $t \leq 90$ and $\gamma(t) = e^{1-t/90}$ for $t > 90$. Note that $\beta(0) = 1$ and $\sigma(0) = 5$ which are same as parameters $\beta$ and $\sigma$ for the CBO  and CBOwCN models.  Also,  $\Zstroke$ is distributed as standard Gaussian random variable and we choose jump intensity, $\lambda$, of Poisson process  equal to $90$.  
\end{experiment}



    \captionof{table}{Success rate for $\alpha = 20 $ }\label{exptab8.3}
\begin{center}
\begingroup
\setlength{\tabcolsep}{2.9pt}
\begin{tabular}{c c c c c } 
\hline\hline 
 $N$ &  CBO & CBOwCWN & JumpCBO &JumpCBOwCPN  \\  
 [0.5ex] 
\hline 
 20 & 2 & 1 & 35 & 37 \\
 50 & 3 & 1 & 75 & 76 \\
 80 & 3 & 0 & 96 & 89 \\
 100 & 4  & 4 & 85 &  94  \\
  [1ex]
\hline 
\end{tabular}
\endgroup
\end{center}

    \captionof{table}{Success rate for $\alpha = 30 $ }\label{exptab8.4}
\begin{center}
\setlength{\tabcolsep}{2.9pt}
\begin{tabular}{c c c c c } 
\hline\hline 
 $N$ &  CBO & CBOwCWN & JumpCBO & JumpCBOwCPN \\  
 [0.5ex] 
\hline 
 20 & 6 & 2 & 20 & 25 \\
 50 & 3 & 0 & 49  & 45 \\
 80 & 5 & 2 & 69 & 64 \\
 100 & 4  & 1 & 74 &  70  \\
  [1ex]
\hline 
\medskip
\end{tabular}
\end{center}

In the case of Rosenbrock function, there is a significant improvement in finding global minimum when using the jump-diffusion models (\ref{cbos1.6}) and (\ref{cbo_neweq_2.17}) in comparison with (\ref{cbos1.5}) and CBOwCWN of \cite{cbo4,cbo5}. As is the case with the Rastrigin funciton, for the Rosenbrock funciton, both jump-diffusion models have similar performance. We note that the Rosenbrock function has quartic growth. We take time-dependent $\beta(t)$, $\sigma(t) $ and $\gamma(t)$ for the jump diffusion models so that $\beta(t)$ is increasing function, $\sigma(t)$ is a decreasing function, and $\gamma(t)$ is constant for some period of time and then starts decreasing exponentially. This experiment illustrates a good balance of \emph{exploration} and \emph{exploitation}
 delivered by the proposed jump-diffusion models. The particles explore the space until $t= 90$ and after that particles start exploiting the searched space.    

\section{Concluding remarks}
We have developed a new CBO algorithm with jump-diffusion SDEs, for which we have studied its well-posedness both at the particle level and its mean-field approximation. The key feature of the jump-diffusion CBO is a more effective energy landscape exploration driven by the randomness introduced by both Wiener and Poisson processes. In practice, this translates into better success rates in finding the global minimizer, and a more robust initialization, which can be located far away from the global minimizer.
A natural extension of the current work is a systematic study of CBO with constraints in the search space as recently discussed in \cite{cbo50,cbo44,cbo45,cbo46}. This is particularly challenging because of the need to accurately treat boundary conditions for the SDEs (see e.g. \cite{cbo37}).  Another interesting research direction is the exploration of jump-diffusion processes in the framework of kinetic-type CBO models \cite{cbo47,cbo48}.

\section*{Acknowledgements}
AS was supported by EPSRC grant no. EP/W52251X/1. DK
was supported by EPSRC grants EP/T024429/1 and
EP/V04771X/1. For the purpose of open access, the authors have applied a Creative Commons Attribution (CC-BY) licence to any Author Accepted Manuscript version arising.
 

\bibliographystyle{alpha}
\bibliography{references}

\newcommand{\etalchar}[1]{$^{#1}$}
\begin{thebibliography}{HKK{\etalchar{+}}22}

\bibitem[ABF{\etalchar{+}}19]{cbo39}
G.~Albi, N.~Bellomo, L.~Fermo, S.~Y. Ha, J.~Kim, L.~Pareschi, D.~Poyato, and
  J.~Soler.
\newblock Vehicular traffic, crowds, and swarms: From kinetic theory and
  multiscale methods to applications and research perspectives.
\newblock {\em Mathematical Models and Methods in Applied Sciences},
  29(10):1901--2005, 2019.

\bibitem[App04]{cbos11}
D.~Applebaum.
\newblock {\em L{\'e}vy Processes and Stochastic Calculus}.
\newblock Cambridge Univ. Press, 2004.

\bibitem[BBP22]{cbo47}
A.~Benfenati, G.~Borghi, and L.~Pareschi.
\newblock Binary interaction methods for high dimensional global optimization
  and machine learning.
\newblock {\em Applied Mathematics \& Optimization (to appear)}, 2022.

\bibitem[BFM97]{cbo23}
T.~Back, D.~B. Fogel, and Z.~Michalewicz.
\newblock {\em Handbook of Evolutionary Computation}.
\newblock IOP, 1997.

\bibitem[BHK{\etalchar{+}}22]{cbo46}
H.-O. Bae, S.-Y. Ha, M.~Kang, H.~Lim, C.~Min, and J.~Yoo.
\newblock A constrained consensus based optimization algorithm and its
  application to finance.
\newblock {\em Applied Mathematics and Computation}, 416:126726, 2022.

\bibitem[BRSW15]{cbo22}
A.~Bertozzi, J.~Rosado, M.~Short, and L.~Wang.
\newblock Contagion shocks in one dimension.
\newblock {\em Journal of Statistical Physics}, 158, 02 2015.

\bibitem[CCTT18]{cbo2}
J.~A. Carrillo, Y.P. Choi, C.~Totzeck, and O.~Tse.
\newblock An analytical framework for consensus-based global optimization
  method.
\newblock {\em Mathematical Models and Methods in Applied Sciences},
  28(6):1037--1066, 2018.

\bibitem[CFRT10]{cbo21}
J.~A. Carrillo, M.~Fornasier, J.~Rosado, and G.~Toscani.
\newblock Asymptotic flocking dynamics for the kinetic {C}ucker-{S}male model.
\newblock {\em SIAM J. Math. Anal.}, 42:218--236, 2010.

\bibitem[CJLZ21]{cbo3}
J.~A. Carrillo, S.~Jin, L.~Li, and Y.~Zhu.
\newblock A consensus-based global optimization method for high dimensional
  machine learning problems.
\newblock {\em ESAIM: COCV}, 27:S5, 2021.

\bibitem[CS07]{cbo20}
F.~Cucker and S.~Smale.
\newblock On the mathematics of emergence.
\newblock {\em Japan J Math}, 2:197--227, 03 2007.

\bibitem[CSV09]{cbo30}
A.~R. Conn, K.~Scheinberg, and L.~N. Vicente.
\newblock {\em Introduction to derivative-free optimization}.
\newblock SIAM, 2009.

\bibitem[CTV21]{cbo44}
J.~A. Carrillo, C.~Totzeck, and U.~Vaes.
\newblock Consensus-based optimization and ensemble {K}alman inversion for
  global optimization problems with constraints, 2021.
\newblock doi:10.48550/arxiv.2111.02970.

\bibitem[DB05]{cbo24}
M.~Dorigo and C.~Blum.
\newblock Ant colony optimization theory: A survey.
\newblock {\em Theoretical Computer Science}, 344(2):243--278, 2005.

\bibitem[DKS16]{cbo15}
K.~Dareiotis, C.~Kumar, and S.~Sabanis.
\newblock On tamed {E}uler approximations of {SDE}s driven by {L}évy noise
  with applications to delay equations.
\newblock {\em SIAM Journal on Numerical Analysis}, 54(3):1840--1872, 2016.

\bibitem[DMT21]{cbos12}
G.~Deligiannidis, S.~Maurer, and M.V. Tretyakov.
\newblock Random walk algorithm for the {D}irichlet problem for parabolic
  integro-differential equation.
\newblock {\em {BIT} Numer. Math.}, 2021.

\bibitem[FHPS20]{cbo34}
M.~Fornasier, H.~Huang, L.~Pareschi, and P.~Sünnen.
\newblock Consensus-based optimization on hypersurfaces: Well-posedness and
  mean-field limit.
\newblock {\em Mathematical Models and Methods in Applied Sciences},
  30(14):2725--2751, 2020.

\bibitem[FHPS21]{cbo45}
M.~Fornasier, H.~Huang, L.~Pareschi, and P.~Sünnen.
\newblock Anisotropic diffusion in consensus-based optimization on the sphere,
  2021.
\newblock doi:10.48550/arxiv.2104.00420.

\bibitem[FW12]{cbo38}
M.~I. Freidlin and A.~D. Wentzell.
\newblock {\em Random Perturbations of Dynamical Systems}.
\newblock Springer, Berlin, 2012.

\bibitem[GHPQ21]{cbo41}
S.~Grassi, H.~Huang, L.~Pareschi, and J.~Qiu.
\newblock Mean-field particle swarm optimization, 2021.
\newblock doi:10.48550/arxiv.2108.00393.

\bibitem[GK80]{cbo19}
I.~Gyöngy and N.~V. Krylov.
\newblock On stochastic equations with respect to semimartingales {I}.
\newblock {\em Stochastics}, 4(1):1--21, 1980.

\bibitem[GP21]{cbo50}
S.~Grassi and L.~Pareschi.
\newblock From particle swarm optimization to consensus based optimization:
  Stochastic modeling and mean-field limit.
\newblock {\em Mathematical Models and Methods in Applied Sciences},
  31(08):1625--1657, 2021.

\bibitem[GT83]{104}
D.~Gilbarg and N.S. Trudinger.
\newblock {\em Elliptic partial differential equations of second order}.
\newblock Springer, 1983.

\bibitem[HJK20]{cbo4}
S.-Y. Ha, S.~Jin, and D.~Kim.
\newblock Convergence of a first-order consensus-based global optimization
  algorithm.
\newblock {\em Mathematical Models and Methods in Applied Sciences},
  30(12):2417--2444, 2020.

\bibitem[HJK21]{cbo5}
S.-Y. Ha, S.~Jin, and D.~Kim.
\newblock Convergence and error estimates for time-discrete consensus-based
  optimization algorithms.
\newblock {\em Numer. Math.}, 147:255–282, 2021.

\bibitem[HKK{\etalchar{+}}22]{cbo51}
S.-Y. Ha, M.~Kang, D.~Kim, J.~Kim, and I.~Yang.
\newblock Stochastic consensus dynamics for nonconvex optimization on the
  {S}tiefel manifold: Mean-field limit and convergence.
\newblock {\em Mathematical Models and Methods in Applied Sciences},
  32(3):533--617, 2022.

\bibitem[HKS89]{cbo26}
R.~A. Holley, S.~Kusuoka, and D.~W. Stroock.
\newblock Asymptotics of the spectral gap with applications to the theory of
  simulated annealing.
\newblock {\em Journal of Functional Analysis}, 83(2):333--347, 1989.

\bibitem[HQ21]{cbo52}
H.~Huang and J.~Qiu.
\newblock On the mean-field limit for the consensus-based optimization, 2021.
\newblock doi:10.48550/arxiv.2105.12919.

\bibitem[Ken10]{cbo25}
J.~Kennedy.
\newblock Particle swarm optimization.
\newblock In C.~Sammut and G.~I. Webb, editors, {\em Encyclopedia of Machine
  Learning}, pages 760--766. Springer, Boston, MA, 2010.

\bibitem[KHJK22]{cbo48}
D.~Ko, S.~Y. Ha, S.~Jin, and D.~Kim.
\newblock Convergence analysis of the discrete consensus-based optimization
  algorithm with random batch interactions and heterogeneous noises.
\newblock {\em Mathematical Models and Methods in Applied Sciences (to
  appear)}, 2022.

\bibitem[KI99]{cbos14}
S.~Ken-Iti.
\newblock {\em L{\'e}vy Processes and infinitely divisible distributions}.
\newblock Cambridge Univ. Press, 1999.

\bibitem[MT05]{cbo36}
G.~N. Milstein and M.~V. Tretyakov.
\newblock Numerical integration of stochastic differential equations with
  nonglobally {L}ipschitz coefficients.
\newblock {\em SIAM Journal on Numerical Analysis}, 43(3):1139--1154, 2005.

\bibitem[MT14]{cbos19}
S.~Motsch and E.~Tadmor.
\newblock Heterophilious dynamics enhances consensus.
\newblock {\em SIAM Review}, 56(4):577--621, 2014.

\bibitem[MT21]{cbo37}
G.~N. Milstein and M.~V. Tretyakov.
\newblock {\em Stochastic numerics for mathematical physics}.
\newblock Scientific computation. Springer, 2nd edition, 2021.

\bibitem[PBL10]{cbo28}
E.~Platen and N.~Bruti-Liberati.
\newblock {\em Numerical solution of stochastic differential equations with
  jumps in finance}.
\newblock Stochastic Modelling and Applied Probability. Springer, Berlin, 2010.

\bibitem[PTTM17]{cbo1}
R.~Pinnau, C.~Totzeck, O.~Tse, and S.~Martin.
\newblock A consensus-based model for global optimization and its mean-field
  limit.
\newblock {\em Mathematical Models and Methods in Applied Sciences},
  27(01):183--204, 2017.

\bibitem[Shi13]{cbo35}
A.~Shiryaev.
\newblock {\em Probability}.
\newblock Springer, New York, 2013.

\bibitem[Szn91]{cbo29}
A.~S. Sznitman.
\newblock Topics in propagation of chaos.
\newblock In P.-L. Hennequin, editor, {\em Ecole d'Et{\'e} de Probabilit{\'e}s
  de Saint-Flour XIX --- 1989}, pages 165--251, Berlin, 1991. Springer.

\bibitem[Tot22]{cbo40}
C.~Totzeck.
\newblock Trends in consensus-based optimization.
\newblock In N.~Bellomo, J.~A. Carrillo, and E.~Tadmor, editors, {\em Active
  Particles, Volume 3: Advances in Theory, Models, and Applications}, pages
  201--226. Springer, Cham, 2022.

\bibitem[Vil03]{cbo33}
C.~Villani.
\newblock {\em Topics in optimal transportation}.
\newblock AMS, 2003.

\end{thebibliography}

\end{document}